\theoremstyle{definition}
\newtheorem{definition}{Definition}
\newtheorem{lemma}{Lemma}
\newtheorem{example}{Example}
\newtheorem{assumption}{Assumption}
\newtheorem{proposition}{Proposition}
\theoremstyle{definition}
\newtheorem{remark}{Remark}
\newcommand{\RV} {\ensuremath{\normalfont \text{RV}}}
\newcommand{\sgnmm}{\text{sgn}} 
\newcommand{\sgn}{\text{sgn}}
\newcommand{\supp} {\ensuremath{\normalfont \text{supp}}}
\newcommand{\GG} {\ensuremath{\mathcal{G}}}
 \newcommand{\PP}{\ensuremath{\mathbb{P}}}
\newcommand{\RR}{\ensuremath{\mathbb{R}}}
\newcommand{\level}{\ensuremath{t}}
\newcommand{\calC}{\ensuremath{\mathcal{C}}}
\newcommand{\calD}{\ensuremath{\mathcal{D}}}
\newcommand{\Vset}{\ensuremath{\mathcal{V}}}
\newcommand{\Eset}{\ensuremath{\mathcal{E}}}
\newcommand{\sm}{\ensuremath{\setminus}}
\newcommand{\bigCI}{\mathrel{\text{\scalebox{1.07}{$\perp\mkern-10mu\perp$}}}}
\definecolor{bleudefrance}{rgb}{0.19, 0.55, 0.91}
\definecolor{darkgreen}{rgb}{0.0, 0.2, 0.13}
\begin{document}
\title{Geometric extremal graphical models and coefficients of extremal dependence on block graphs}
\author[1]{Ioannis Papastathopoulos} 
\author[2]{Jennifer L. Wadsworth}
\affil[1]{University of Edinburgh, U.K.}
\affil[2]{Lancaster University, U.K.}

\doublespacing
\maketitle
\begin{abstract}
  We introduce the concept of geometric extremal graphical models,
  which are defined through the gauge function of the limit set
  obtained from suitably scaled random vectors in light-tailed
  margins. For block graphs, we prove results relating to the
  propagation of various extremal dependence coefficients along the
  graph. A particular focus is placed on coefficients that link to the
  framework of conditional extreme value theory, which are especially
  interesting when variables do not all attain their most extreme
  values simultaneously. We also consider results related to the case
  when variables do exhibit joint extreme behaviour. Through the
  recent translation of the geometric approach for multivariate
  extremes to a statistical modelling framework, geometric extremal
  graphical models, and results relating to them, pave the way for an
  approach to modelling of high dimensional extremes with complex
  extremal dependence structures.
\end{abstract}




\section{Introduction}
\label{sec:intro}

\subsection{Setting}
\label{sec:setting}
Estimates are often required of rare-event probabilities that involve multiple variables. Examples include the probability that many sites on a river network experience flooding, that several stock market holdings simultaneously decrease sharply, or that different components of a weather system (e.g., wind, rain, temperature) are extreme in a damaging combination. The modelling of multivariate extremes entails using suitable characterizations of the joint tail of a distribution to yield a statistical model from which extrapolations further into the tail are possible.

When it comes to high dimensional modelling of extremes, much progress has been made in the context of spatial extremes, where models can now be defined and fitted to hundreds or thousands of observation locations \citep{Simpsonetal23,Shietal24,Hazraetal25}. This is because the spatial nature of the data permits highly structured models, based on simplifying assumptions that dependence decays with distance, and involving relatively few parameters in comparison to the number of locations. When the data is more genuinely multivariate, most extreme value modelling is typically limited to low $(< 10)$ dimensions. One reason for this is the high degree of complexity in suitably characterizing the extremal dependence structure of an arbitrary multivariate vector (see, e.g., \cite{WadsworthCampbell24} for a discussion). Nonetheless, even under simplifying assumptions on the extremal dependence, potential models can still entail large numbers of parameters, rendering estimation and interpretation extremely difficult. For this reason, the concept of sparsity in multivariate extreme value modelling has become of recent interest; see \citet{EngelkeIvanovs21} for a review. One notion of sparsity is to try and simplify high dimensional modelling tasks through a graphical specification of dependence. Our focus in this work is to outline a framework for precisely this task, in the context of a geometric approach for multivariate extremes.

\subsection{Background on graphical models}
\label{sec:GMbackground}

Graphical models represent a way to simplify the probabilistic expression and statistical analysis of a joint distribution in high dimensions. The key notion is the concept of conditional independence between variables. Let $\bm{X} \in \mathbb{R}^d$, and for some index set $J \subseteq \{1,\ldots,d\}$, denote $\bm{X}_J = (X_j: j \in J)$. Assuming the existence of joint densities, we let $f_{J}(\bm{x}_J)$ denote the marginal density of $\bm{X}_J$, and for disjoint index sets $I,J$ define the joint density $f_{I,J}(\bm{x}_I,\bm{x}_J)$ to be that of $\bm{X}_{I \cup J}$. Conditional densities are defined as $f_{I \mid J}(\bm{x}_{I}\mid \bm{x}_{J})= f_{I,J}(\bm{x}_{I},\bm{x}_{J})/f_{J}(\bm{x}_{J})$. For disjoint indexing sets $I,J,K$ we say $\bm{X}_I$ is conditionally independent of $\bm{X}_J$ given $\bm{X}_K$ if $f_{I,J\mid K}(\bm{x}_I,\bm{x}_J\mid\bm{x}_K) = f_{I\mid K}(\bm{x}_I\mid\bm{x}_K)f_{J\mid K}(\bm{x}_J\mid\bm{x}_K)$, and we write $\bm{X}_I \bigCI \bm{X}_J \mid \bm{X}_K$. An undirected graphical model, which we work with here, provides a succinct way to encode these conditional independences.

An undirected graph $\GG$ is defined by its sets of vertices $\Vset$ and edges $\Eset \subset \Vset \times \Vset$. For the graph $\GG = (\Vset,\Eset)$, let $X_i$ denote the random variable associated with vertex $i$. A random vector $\bm X$ is said to follow a graphical model with
conditional independence graph $\GG=(\Vset, \Eset)$ if its
distribution satisfies the pairwise Markov property relative to
$\mathcal{G}$, that is, if
$X_i\bigCI X_j \mid \bm X_{\Vset\sm \{i,j\}}$ for all
$(i,j)\notin \Eset$. By the Hammersley--Clifford theorem, this is
equivalent to the global Markov property when $\bm{X}$ has a positive
continuous density, $f$, which means that the conditional independence
relationships hold when conditioning on vertices $S \subset \Vset$ that \emph{separate} $i$ and $j$, meaning all paths from $i$ to $j$ intersect a node in $S$ \citep{Besag74, grim18}. For example, in the first panel of Figure~\ref{fig:graphs} below, $X_1 \bigCI X_9 \mid \bm{X}_{\{3,4,6\}}$.

In this work we focus on a simple class of undirected graphical models, termed \emph{block graphs}. These are a subset of the possible \emph{decomposable} graphical models, which may be described through sets of indices known as \emph{cliques} and \emph{separators}. Here we define a clique as a group of indices that form a maximal fully-connected subgraph, while the separators are intersections of (and hence subsets of) the cliques. When
$\bm X$ follows a decomposable graphical model, $f$ factorizes according to
\begin{equation}
  f(\bm x) = \prod_{C\in \mathcal{C}} f_C(\bm x_C)/\prod_{D\in
    \mathcal{D}} f_{D}(\bm x_D), \label{eq:decompGM}
\end{equation}
where $\mathcal{C}$ is the set of cliques and $\mathcal{D}$ the set of
separators. Cliques typically represent much smaller sets of indices
than $\Vset$, and the separators are subsets of cliques, so that equation~\eqref{eq:decompGM}
simplifies the expression of $f$ in terms of lower dimensional
densities. Block graphs are decomposable graphs where the separators are all singleton sets. 

\subsection{Graphical models for extremes}
\label{sec:GMExtreme}

The concept of (undirected) graphical models for extremes was introduced in \citet{EngelkeHitz20}. Specifically, extremal graphical models were established for a class of distributions known as multivariate generalized Pareto distributions \citep{RootzenTajvidi06,Rootzenetal18}, for which the usual notion of conditional independence does not apply because the support of the distribution is not a product space. Instead, \citet{EngelkeHitz20} defined a graphical factorization not on probability density functions, but on so-called exponent measure densities, and showed that this has an interpretation in terms of usual conditional independences when conditioning to restrict the support of the random vector to a product space. The use of multivariate generalized Pareto distributions, however, means that this approach is restricted to random vectors that are fully asymptotically dependent, implying that joint extremes of all variables occur with a similar frequency to marginal extremes, or more specifically that
\begin{align}
\chi_{\Vset} = \lim_{u \to 1} \Pr(F_j(X_j)>u, \forall~j \in \Vset) / (1-u) >0. \label{eq:chipos}
\end{align}
The formulation in \citet{EngelkeHitz20} actually required even stricter assumptions that no subvectors $\bm{X}_J$, $J\subset \Vset$ experience simultaneous extremes while $\bm{X}_{\Vset\sm J}$ are smaller order.
These are particularly stringent requirements for the high dimensional case that graphical models are intended to facilitate, and so limits the applicability of such models in practice. Graphical models on exponent measures, which do not require assumption~\eqref{eq:chipos}, have been outlined more recently in \citet{Engelkeetal25}, but these have not yet given rise to applicable statistical methodology. 

Other related work includes \citet{LeeCooley22} and
\citet{Gongetal24}, who concurrently developed the notion of partial
tail correlation for inferring extremal conditional independence,
under the same framework of multivariate regular variation as
\citet{EngelkeHitz20}. \citet{GissiblKluppelberg18} and
\citet{Gissibletal21} consider so-called max-linear models on directed
acyclic graphs. Such models are generally only suited to multivariate
data arising as componentwise maxima and can be difficult to interpret
for statistical analysis due to lack of densities. Seeking
applicability to a broader range of extremal dependence structures,
\citet{CaseyPapastathopoulos23} \color{black} develop graphical
modelling ideas in the setting of \textit{conditional extreme value
  theory} \citep{hefftawn04,heffres07}, where the resulting
theoretical forms are diverse and rich, presenting flexibility but
practical challenges for implementation. \color{black}
\citet{Farrelletal24} adopt a pragmatic modelling approach, by
imposing graphical structure on the so-called residuals of the
conditional extreme value model. Similar ideas were explored in the
discussion to \citet{EngelkeHitz20} \citep{Wadsworth20}.

\subsection{Geometric extremes}
\label{sec:geometricintro}

When working with extremes of high dimensional random vectors, a huge variety of extremal dependence scenarios can occur, in the sense of which groups of variables can be extreme simultaneously. \citet{Goixetal17} and \citet{Simpsonetal20} considered approaches to estimating groups of variables experiencing co-extreme behaviour. The dependence coefficients introduced by \citet{Simpsonetal20} for this purpose were shown to be connected to the \emph{limit set} of the random vector in light-tailed margins, where it exists, by \citet{NoldeWadsworth22}. This \emph{geometric} representation of multivariate extremes has recently been translated to a statistical model for the multivariate tail by \citet{WadsworthCampbell24} and \citet{Papastathopoulosetal24}. A major advantage of this new modelling framework in comparison to existing approaches for multivariate extremes is its ability to be able to capture all kinds of complex multivariate extremal dependence.

In this paper we introduce the concept of graphical models in the geometric extremes framework, termed \emph{geometric extremal graphical models}, and prove a selection of results relating to the propagation of extremal dependence coefficients along the graph. This definition and results lay the groundwork for statistical inference for geometric extremal graphical models, using the approaches in \citet{WadsworthCampbell24} and \citet{Papastathopoulosetal24}. In particular, while the focus of this work is on various theoretical properties of geometric extremal graphical models, the definition is readily exploitable in a statistical inference setting.

In Section~\ref{sec:geometric} we provide further background on the geometric representation for multivariate extremes and introduce geometric extremal graphical models. In Section~\ref{sec:Dependencecoeffs}, we show that a key summary of extremal dependence that relates to the conditional extremes framework of \citet{hefftawn04} and \citet{heffres07} has a natural factorization over the structure of a block graph. Furthermore, a second coefficient from this framework is shown to aggregate through a product or maximum operation, depending on whether the first coefficient is zero or not. In Section~\ref{sec:jointex}, we provide some results relating to the occurrence of joint extremes. Section~\ref{sec:discussion} concludes.

\section{Geometric representation of multivariate extremes}
\label{sec:geometric}
\subsection{Limit sets}

Consider a random vector $\bm{X}$ with common light-tailed margins, and let $\bm{X}_k$, $k=1,\ldots,n$ represent independent and identically distributed copies of $\bm{X}$. Denote the scaled $n$-point sample cloud by
\[
  N_n = \left\{\frac{\bm X_1}{r_n},\ldots, \frac{\bm{X}_n}{r_n}\right\}, 
\]
where the sequence $r_n$ depends on the precise form of the marginals of $\bm{X}$: a suitable choice of sequence is one that is asymptotically equivalent to the $1-1/n$ quantile. We are interested in cases where the random set $N_n$ converges in probability onto a compact limit set $G$ containing at least two points. Necessary and sufficient conditions for this have been given in \citet{Balkemaetal10}. \citet{Davisetal88} and \citet{KinoshitaResnick91} also consider convergence on to limit sets under various assumptions.

The shape of the limit set $G$ is affected by both the particular choice of light-tailed margins, and the extremal dependence structure between the components of $\bm{X}$. Our particular interest is in the information encoded by $G$ on the extremal dependence structure, and as such we take a copula-like approach and consider standardized margins. Two particularly clean choices are standard exponential and Laplace margins, i.e., where a single component $X_{k,i}$, $i=1,\ldots,d$ has respective densities
\begin{align*}
    f^E(x) = \exp(-x) \mathbbm{1}(x>0),\qquad \mbox{or}\qquad f^L(x) = \frac{1}{2}\exp(-|x|).
\end{align*}

Exponential margins are simplest to work with when only positive extremal association arises, but Laplace margins provide more detail when negative association can arise as well \citep{NoldeWadsworth22,Papastathopoulosetal24}. 

Suppose specifically now that the random vector $\bm{X} = (X_i : i \in \Vset)$ has
standard exponential or Laplace margins, and Lebesgue
joint density $f(\bm{x})$. A sufficient condition for the rescaled
$n$-point sample cloud
\[
  N_n = \left\{\frac{\bm X_1}{\log n},\ldots, \frac{\bm{X}_n}{\log
      n}\right\}
\]
to converge onto a limit set $G = \{\bm x \in \mathcal{S}^d\,:\,g(\bm x) \leq 1\}$ as
$n \to \infty$ is that the density $f$ satisfies
\begin{equation}                
  -\log f(t\bm x_{\level})/t \to g(\bm x), \quad \text{$\level \to \infty$},\quad \bm x_{\level} \to \bm x, \quad \bm x \in \mathcal{S}^d,
  \label{eq:gauge_densconv}
\end{equation}
for continuous $g$, where $\mathcal{S}^d = \mathcal{S} \times \cdots\times\mathcal{S}$, with $\mathcal{S}=[0,\infty)$ for exponential margins, and $(-\infty,\infty)$ for Laplace margins \citep{BalkemaNolde10,NoldeWadsworth22}. The limit set $G$ is star-shaped, and the 1-homogeneous function $g$, which describes the boundary of $G$, is termed the \emph{gauge function}.\
Lower dimensional marginal gauge functions are found through the
following minimization operation \citep[][Proposition
2.4]{NoldeWadsworth22}:
\begin{equation}
  g_J(\bm{x}_J) = \min_{x_i \in \mathcal{S}\,:\, i \not \in J} g(\bm{x}),
  \label{eq:min_gauge}
\end{equation}
where $\bm{x}_J = (x_j:j \in J)$ for any index set
$J \subseteq\{1,\ldots,d\}$, and $g_{J}$ is the marginal gauge
function for the variables in $J$.\ Note that exponential/Laplace margins
entails $g_{\{j\}}(x_j) = |x_j|$, which imposes a general constraint
$g(\bm{x}) \geq \max(|x_i|: i \in \Vset)$.

\subsection{Geometric extremal graphical models}
When $\bm{X}$ follows a decomposable graphical model and the density
convergence~\eqref{eq:gauge_densconv} holds for both full joint and lower dimensional densities, then
\begin{align}
  g(\bm x) = \sum_{C \in \mathcal{C}} g_{C}(\bm x_{C}) - \sum_{D \in \mathcal{D}} g_{D}(\bm x_{D}).
  \label{eq:extDGM}
\end{align}
Notice that while factorization~\eqref{eq:decompGM} and
convergence~\eqref{eq:gauge_densconv} imply~\eqref{eq:extDGM}, the converse
is not true in general. Equation~\eqref{eq:extDGM} forms the basis of our definition of a geometric extremal graphical model.\ 
\begin{definition}[Geometric extremal graphical model]
  The random vector $\bm X$ is said to follow a (decomposable) geometric extremal graphical model relative to a graph $\GG$ if
  convergence~\eqref{eq:gauge_densconv} and equation~\eqref{eq:extDGM}
  hold. Factorization~\eqref{eq:decompGM} is not required to hold.
\end{definition}

\begin{remark}
  The definition of extremal graphical models can be extended simply
  to more complex types of graphical model. For example, if
  $\mathcal{G}$ is not decomposable, then \eqref{eq:extDGM} still
  holds with $\mathcal{C}$ replaced by the set of prime components,
  that is, the maximal subgraphs of $\mathcal{G}$ that cannot be
  decomposed. 
\end{remark}
As mentioned, our primary focus in this work is on block graphs, a
special type of decomposable graphical model in which the separator
sets are singletons. Because of the fact that
$g_{\{j\}}(x_j) = |x_j|$, this has a simplifying effect on the form of
the geometric extremal graphical model.

\begin{definition}
    A \emph{block geometric extremal graphical model} is defined through the gauge function
\begin{align}
    g(\bm{x}) = \sum_{C \in \mathcal{C}} g_C(\bm{x}_C) - \sum_{D \in \mathcal{D}} |x_D|. \label{eq:bgg}
\end{align}
\end{definition}

For later use, we also define special cases of block geometric extremal graphical models: tree and chain geometric extremal graphical models. A tree is a block graph in which all cliques are of size two, while a chain is a tree graph for which all separators appear only once in $\mathcal{D}$.

\begin{definition}
 A \emph{tree geometric extremal graphical model} with vertex set $\Vset$ and edge set $\Eset = \mathcal{C}$ is defined through the gauge function 
\begin{align}
  g(\bm{x}) = \sum_{(i,j) \in \mathcal{E}} g_{\{i,j\}}(x_i,x_j) - |x_i| - |x_j|
+ \sum_{k \in \Vset} |x_k|. \label{eq:tegm}
\end{align}
\end{definition}

\begin{definition}
 A \emph{chain geometric extremal graphical model} with vertex set $\Vset=\{1,2,\ldots,d\}$ has edge set $\Eset=\mathcal{C}=\{(1,2),(2,3),\ldots,(d-1,d)\}$ is, defined through the gauge function 
\begin{align}
  g(\bm{x}) &=\sum_{k=1}^{d-1} g_{\{k,k+1\}}(x_{k},x_{k+1}) - \sum_{k=2}^{d-1} |x_k|. \label{eq:cegm}
\end{align}
\end{definition}

We remark that in the case of exponential margins, the absolute value bars in equations~\eqref{eq:bgg},~\eqref{eq:tegm} and~\eqref{eq:cegm} are not necessary, since all $x_j \geq 0$.

\begin{example}\normalfont
\label{ex:3graphs}
Figure~\ref{fig:graphs} displays examples of general block, tree and chain graphs. In the first panel, the cliques and separators are $\mathcal{C}=\{\{1,2,3\},\{3,4\},\{4,5\},\{4,6\},\{6,7,8,9\}\}$ and $\mathcal{D} = \{3,4,4,6\}$. In the second panel, the cliques and separators are $\mathcal{C}=\{\{1,2\},\{2,3\},\{2,4\},\{4,5\},\{4,6\}\}$, $\mathcal{D}=\{2,2,4,4\}$. In the third panel the cliques and separators are $\mathcal{C}=\{\{1,2\},\{2,3\},\{3,4\},\{4,5\}\}$, $\mathcal{D}=\{2,3,4\}$.  

\end{example}

\begin{figure}
    \centering
\includegraphics[width=0.3\linewidth]{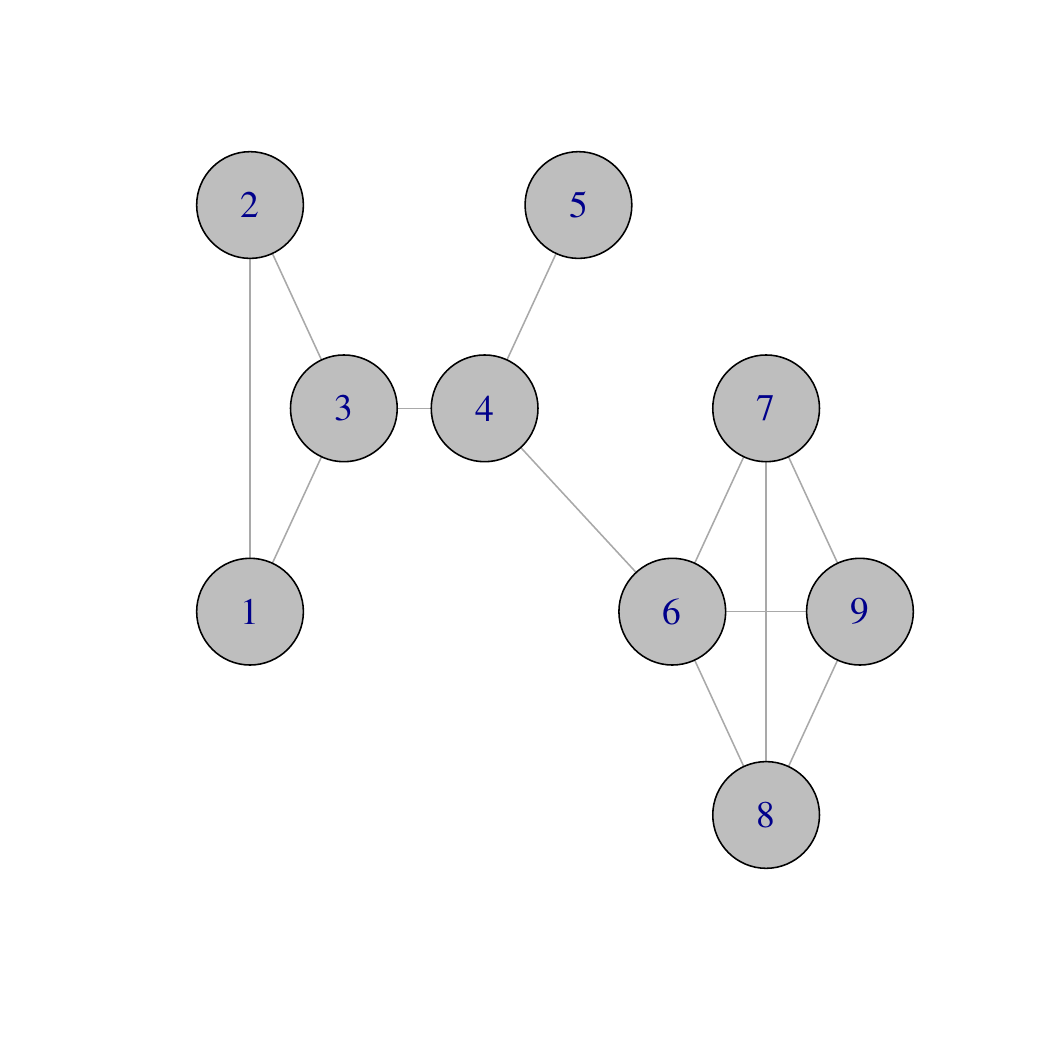}
\includegraphics[width=0.3\linewidth]{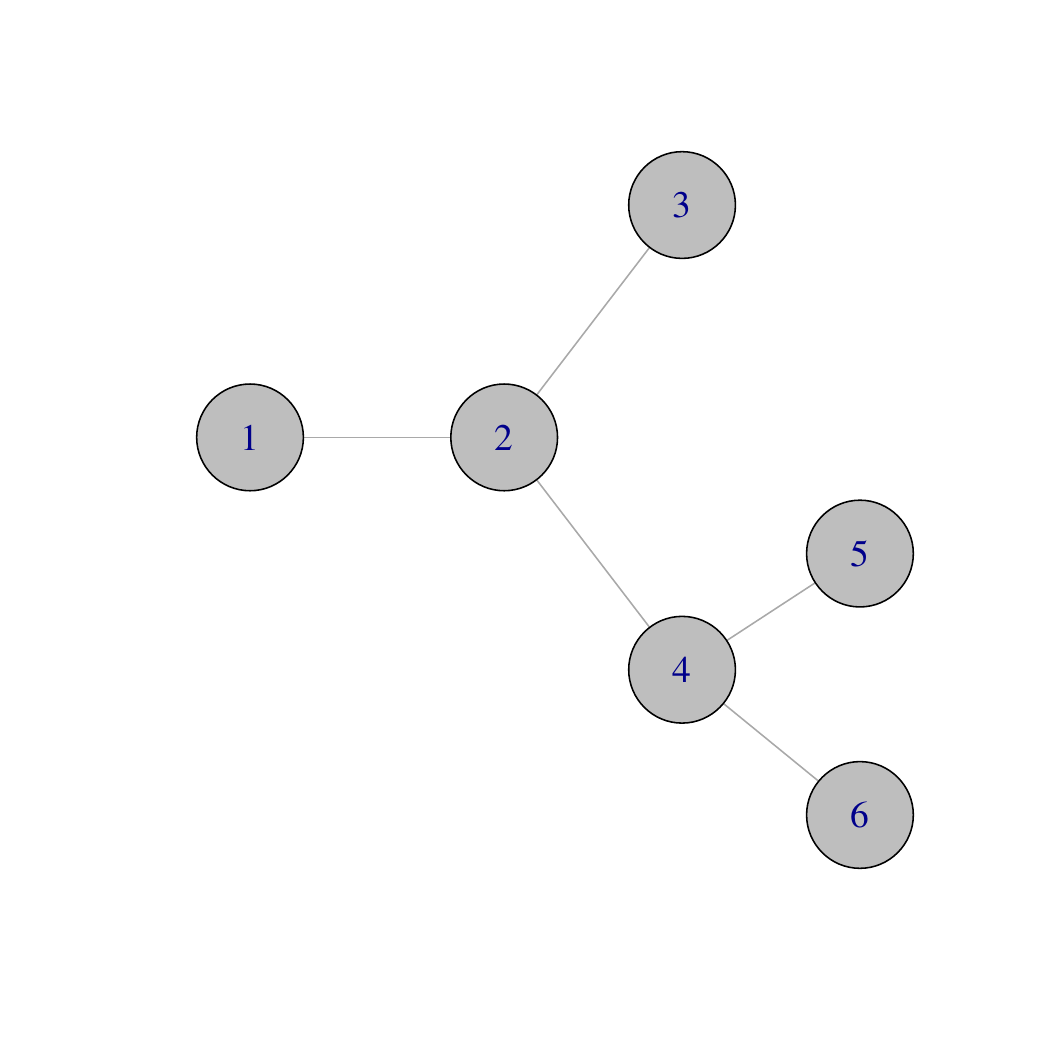}
\includegraphics[width=0.3\linewidth]{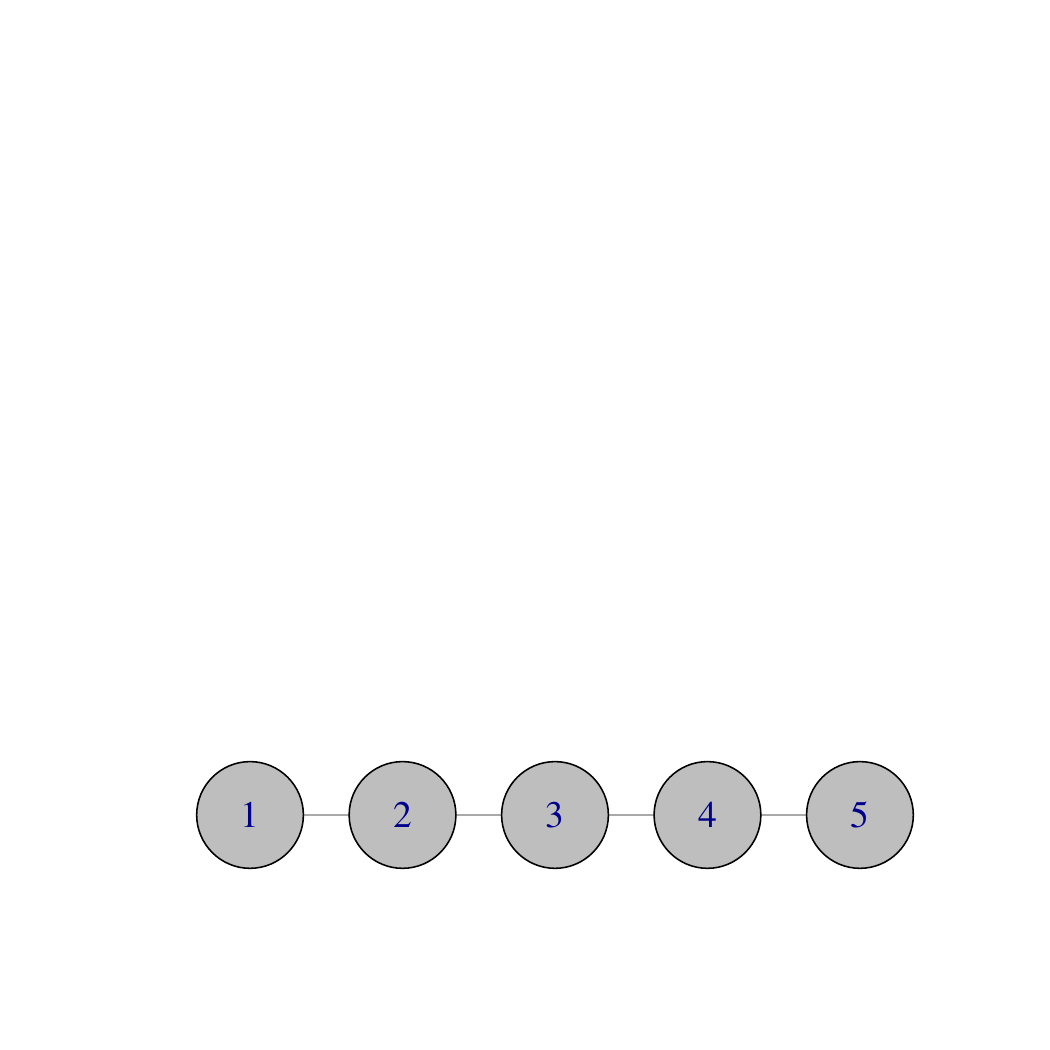}
    \caption{Examples of block, tree and chain graphs, respectively (see Example~\ref{ex:3graphs}).}
    \label{fig:graphs}
\end{figure}

\section{Conditional extremes dependence coefficients on the graph}
\label{sec:Dependencecoeffs}
In this section, we outline results related to the dependence coefficients that arise in the framework of the conditional extreme value model \citep{hefftawn04,heffres07}. We focus initially on exponential margins, subsequently adapting results to the Laplace margin case. This simplifies the presentation in the exponential margin case, which is useful when only positive dependence is observed between variables.

\subsection{Exponential margins}
\label{sec:depexp}

\subsubsection{Conditional extremes assumptions and notation}
\label{sec:cexassnot}

\citet{NoldeWadsworth22} linked several different extremal dependence coefficients to the geometry of the limit set $G$, following on from the work of \citet{Nolde14}. One important coefficient, $\bm{\alpha}_{\Vset \sm \{i\}}$, defined in Assumption~\ref{ass:ce_conditions} below, links to the conditional extreme value framework of \citet{hefftawn04} and \citet{heffres07}.

The basic assumption assumption in conditional extreme value theory is
that for each $i\in\Vset$, there exist functions
$\bm a_{\Vset \sm \{i\} \mid i}\,:\,\RR \to \RR^{d-1}$ and
$\bm b_{\Vset \sm \{i\} \mid i}\,:\,\RR \to \RR_+^{d-1}$, such
that
\begin{equation}
  \PP\left(X_i-\level > x, \frac{\bm X_{-i}- \bm a_{\Vset
        \sm \{i\} \mid i}(X_{i})}{\bm b_{\Vset\sm \{i\}\mid
        i}(X_i)}\leq \bm z~\Big|~X_{i} >
    \level \right) \to \exp(-x)\,K_{\Vset \sm \{i\} \mid
    i}(\bm z), \quad \text{as $\level\to \infty$,}
  \label{eq:ce_dist_conv}
\end{equation}
at continuity points
$(x, \bm z) \in \mathbb{R}_+\times \mathbb{R}^{d-1}$ of
the limit. The quantity $K_{\Vset \sm \{i\} \mid i}$ is a distribution
function on $\mathbb{R}^{d-1}$ satisfying
$\lim_{\level\to\infty}K_{\Vset\sm \{i\} \mid i}(\bm t_j) = 1$
for all $j\in \Vset\sm \{i\}$, and $\bm t_j$ denotes the vector
that has its $j$th element equal to $t$ and has all its other elements
equal to infinity.

When joint densities and relevant limits exist, application of
L'H\^{o}pital's rule gives that convergence~\eqref{eq:ce_dist_conv} is
equivalent to
\begin{equation}
  \PP\left(\frac{\bm X_{-i}- \bm a_{\Vset
        \sm \{i\} \mid i}(\level)}{\bm b_{\Vset\sm \{i\}\mid
        i}(\level)}\leq \bm z~\Big|~X_{i} =
    \level \right) \to K_{\Vset \sm \{i\} \mid
    i}(\bm z),  \quad \text{as $\level\to \infty$.}
  \label{eq:ce_dist_conv_kernel}
\end{equation}
Below in Assumption \ref{ass:ce_conditions}, we also require
convergence of the joint density and restriction on the support of the marginal distributions of $K_{\Vset \sm \{i\} \mid
    i}(\bm z)$.
  \begin{assumption}
    \label{ass:ce_conditions}
    For all $i \in \Vset$,
    \begin{equation}
      \frac{\partial^{d-1}}{\partial \bm z} \PP\left(\frac{\bm X_{-i} -
          \bm a_{\Vset \sm \{i\}\mid i}(\level)}{\bm b_{\Vset \sm \{i\}\mid i }(\level)}
        \leq \bm z \mid X_{i} = \level\right)
      \to \frac{\partial^{d-1}}{\partial \bm z}\,K_{\Vset \sm \{i\} \mid i}(\bm
      z) =: k_{\Vset \sm \{i\}\mid i}(\bm z), \quad \text{as $\level\to \infty$.}
      \label{eq:ce_densconv}
    \end{equation}
    
    Additionally,
    \begin{itemize}[wide=0\parindent]
    \item[$(i)$] For each $j\in\Vset \sm \{i\}$, the support $\supp(K_{j\mid i})$ of the marginal
      distribution $K_{j\mid i}$ of
      $K_{\Vset \sm \{i\} \mid i}$ includes $(0,\infty)$;
    \item[$(ii)$]
      $\bm \alpha_{\Vset \sm \{i\} \mid i}:=\lim_{\level \to
        \infty} \bm a_{\Vset \sm \{i\} \mid i}(t)/t$ exists in
      $[0,1]^{d-1}$.
    \end{itemize}
  \end{assumption}

\citet{NoldeWadsworth22}
showed that under a bivariate version of Assumption~\ref{ass:ce_conditions} and convergence
\eqref{eq:gauge_densconv}, then for
$j\in \Vset\sm \{i\}$,
\begin{equation}
  \alpha_{j\mid i} = \max \{\widetilde{\alpha}_{j\mid
    i}\,:\,g_{\{i,j\}}(1, \widetilde{\alpha}_{j\mid
    i})=1\}.
  \label{eq:gauge_alpha_condition}
\end{equation}
The main reason that \citet{NoldeWadsworth22} considered only
bivariate representations for conditional extremes, is that the
dependence functions $a_{j\mid i}$ and $b_{j\mid i}$,
$j \in \Vset \sm \{i\}$, are determined by pairwise dependences
between $(X_i, X_j)$. However, to make connections between
$\alpha$-coefficients in general dimensional settings, as we do below, we
require joint convergence properties.

\subsubsection{\texorpdfstring{$\alpha$}{a}-coefficients}
\label{sec:alpha_coeffs}
Proposition~\ref{prop:joint_convergence} extends equation~\eqref{eq:gauge_alpha_condition} to multidimensional settings. In the below, we define the function $g_{\{i\} \cup \{\Vset \sm \{i\}\}}(x_i,\bm{x}_{\Vset \sm \{i\}}) = g(\bm{x})$ to be the gauge function with arguments re-ordered from $\Vset$ to $\{i,\Vset \sm \{i\}\}$.

\begin{proposition}
  \label{prop:joint_convergence}
  Suppose that for $\bm X=(X_i\,:\,i\in \Vset)$, the
  convergence \eqref{eq:gauge_densconv} and Assumption
  \ref{ass:ce_conditions}
  holds.
  Then,
  \begin{itemize}[wide=0\parindent]
  \item[$(i)$] for all $i\in \Vset$,
    $g_{\{i\} \cup \{\Vset \sm \{i\}\}}(1, \bm \alpha_{\Vset
      \sm \{i\} \mid i}) = 1$;

  \item[$(ii)$] for all $i\in \Vset$, if there are multiple
    vectors $\bm \alpha$ satisfying
    $g_{\{i\} \cup \{\Vset \sm \{i\}\}}(1, \bm \alpha) = 1$,
    then the coordinate-wise maximum such vector $\bm \alpha^\star$
    also satisfies
    $g_{\{i\} \cup \{\Vset \sm \{i\}\}}(1, \bm \alpha^\star)
    = 1$ and
    $\bm \alpha_{\Vset \sm \{i\} \mid i} = \bm
    \alpha^\star$.
  \end{itemize}
\end{proposition}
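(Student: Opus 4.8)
The plan is to derive the statement directly from the conditional extreme value density convergence in Assumption~\ref{ass:ce_conditions}, combined with the gauge-function density convergence~\eqref{eq:gauge_densconv}, mimicking at the level of the joint density the argument that \citet{NoldeWadsworth22} used in the bivariate case to obtain~\eqref{eq:gauge_alpha_condition}. First I would write the conditional density of $\bm{X}_{-i}$ given $X_i = t$ in terms of the joint density: $f_{\bm{X}_{-i}\mid X_i}(\bm{y}\mid t) = f(\bm{y},t)/f_{\{i\}}(t)$, and recall that exponential margins give $f_{\{i\}}(t) = e^{-t}$, so $-\log f_{\{i\}}(t)/t \to 1$. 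Now substitute $\bm{y} = \bm{a}_{\Vset\sm\{i\}\mid i}(t) + \bm{b}_{\Vset\sm\{i\}\mid i}(t)\,\bm{z}$, so that the left side of~\eqref{eq:ce_densconv} is (up to the Jacobian $\prod_j b_{j\mid i}(t)$, which is subexponential) $f\bigl(\bm{a}_{\Vset\sm\{i\}\mid i}(t) + \bm{b}_{\Vset\sm\{i\}\mid i}(t)\bm{z},\, t\bigr)/e^{-t}$. Taking $-\log(\cdot)/t$ and using that, for fixed $\bm z$ in the support of the limit $k_{\Vset\sm\{i\}\mid i}$, this conditional density converges to a strictly positive finite limit, we get
\begin{equation}
  \frac{1}{t}\,\Bigl\{ -\log f\bigl(\bm{a}_{\Vset\sm\{i\}\mid i}(t) + \bm{b}_{\Vset\sm\{i\}\mid i}(t)\bm{z},\, t\bigr)\Bigr\} - 1 \longrightarrow 0. \label{eq:proofidea1}
\end{equation}

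Next I would divide the vector argument of $f$ inside the logarithm by $t$ and invoke~\eqref{eq:gauge_densconv}: writing $\bm{x}_t = \bigl(\bm{a}_{\Vset\sm\{i\}\mid i}(t) + \bm{b}_{\Vset\sm\{i\}\mid i}(t)\bm{z}\bigr)/t$ for the $\Vset\sm\{i\}$ coordinates and $1$ for the $i$th coordinate, one needs $\bm{x}_t \to \bm{x}$ for some $\bm{x} \in \mathcal{S}^d$, and then $-\log f(t\bm{x}_t)/t \to g(\bm{x})$. By Assumption~\ref{ass:ce_conditions}$(ii)$, $\bm{a}_{\Vset\sm\{i\}\mid i}(t)/t \to \bm{\alpha}_{\Vset\sm\{i\}\mid i}$; the term $\bm{b}_{\Vset\sm\{i\}\mid i}(t)\bm{z}/t$ needs to vanish, which follows from the standard fact in conditional extremes that $b_{j\mid i}(t) = o(t)$ (this is implied by $K_{j\mid i}$ being a nondegenerate distribution together with the margin constraint; alternatively it can be read off from the geometry, since $b_{j\mid i}(t)/t \to 0$ is forced by $g(\bm x) \ge \max_k |x_k|$ and the existence of the limit). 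Hence $\bm{x}_t \to (1, \bm{\alpha}_{\Vset\sm\{i\}\mid i})$ in the reordered coordinates, and~\eqref{eq:proofidea1} becomes $g_{\{i\}\cup\{\Vset\sm\{i\}\}}(1,\bm{\alpha}_{\Vset\sm\{i\}\mid i}) = 1$, which is part~$(i)$. I should be a little careful that $\bm z$ can be chosen so that all coordinates are strictly positive and lie in $\supp(k_{\Vset\sm\{i\}\mid i})$, which is where condition $(i)$ of Assumption~\ref{ass:ce_conditions} is used; and that the convergence~\eqref{eq:ce_densconv} is locally uniform enough to let $\bm z$ stay fixed while $t\to\infty$.

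For part~$(ii)$, suppose $\bm{\alpha}$ and $\bm{\alpha}'$ both satisfy $g_{\{i\}\cup\{\Vset\sm\{i\}\}}(1,\cdot) = 1$; I would argue that the coordinatewise maximum $\bm{\alpha}^\star$ also does. The key tools are: (a) $g$ is $1$-homogeneous and continuous, hence $G = \{g \le 1\}$ is star-shaped and compact, and on the slice $\{x_i = 1\}$ the set $\{\bm{\alpha} : g(1,\bm{\alpha}) \le 1\}$ is compact; (b) the marginalization identity~\eqref{eq:min_gauge}, which gives $g_{\{i,j\}}(1,\alpha_j) = \min_{\text{others}} g(1,\bm{\alpha}) \le g(1,\bm{\alpha}) = 1$ for each $j$, combined with the margin constraint $g_{\{i,j\}}(1,\alpha_j) \ge \max(1,|\alpha_j|) = 1$, forces $g_{\{i,j\}}(1,\alpha_j) = 1$ for every $j$ and every vector $\bm{\alpha}$ lying on the level set; and then~\eqref{eq:gauge_alpha_condition} applied pairwise identifies each $\alpha^\star_j = \alpha_{j\mid i}$ with the largest value attaining the bivariate constraint, so $\bm{\alpha}^\star$ is the coordinatewise maximum of the whole level set. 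The remaining point is that this coordinatewise-maximal vector is itself on the level set, i.e.\ $g(1,\bm{\alpha}^\star)=1$ rather than merely $\le 1$: here I would use that $g(1,\bm\alpha^\star) \ge \max_j |\alpha^\star_j|$ is not enough on its own, so instead I would argue via an approximation/continuity argument — each $\alpha^\star_j$ is approached along the level set, and by compactness one can extract a point of the level set whose $j$th coordinate equals $\alpha^\star_j$; a convexity-type or monotonicity argument on $g$ restricted to the slice then shows the supremum over the level set is attained coordinatewise simultaneously. Finally, to identify $\bm{\alpha}_{\Vset\sm\{i\}\mid i} = \bm{\alpha}^\star$, note part~$(i)$ already places $\bm{\alpha}_{\Vset\sm\{i\}\mid i}$ on the level set, while the pairwise characterization~\eqref{eq:gauge_alpha_condition} forces each of its coordinates to be the coordinatewise maximum, so it must equal $\bm{\alpha}^\star$.

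The main obstacle I anticipate is the last step of part~$(ii)$: showing that the coordinatewise supremum of the level set $\{g_{\{i\}\cup\{\Vset\sm\{i\}\}}(1,\cdot) = 1\}$ is attained \emph{at a single point}, i.e.\ that this level set has a genuine coordinatewise-maximal element rather than merely coordinatewise suprema that are approached by different sequences. This needs some structural input on $g$ beyond $1$-homogeneity and continuity — most naturally, that $g$ restricted to the relevant slice is such that its sublevel set is "downward closed" in the positive orthant after the appropriate reflection (which follows from the margin constraints and the geometry of $G$), so that $\bm{\alpha}^\star \in \{g \le 1\}$, combined with the argument above forcing equality. I would devote the bulk of the rigorous write-up to making this step precise, likely invoking~\eqref{eq:min_gauge} and~\eqref{eq:gauge_alpha_condition} coordinate by coordinate as the engine.
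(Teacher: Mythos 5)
Your proof of part~$(i)$ follows the same route as the paper: take logs in the conditional-density convergence~\eqref{eq:ce_densconv}, divide by $t$, invoke~\eqref{eq:gauge_densconv}, and show $b_{j\mid i}(t)/t\to 0$. On the last point, your ``alternative'' (read it off the bound $g\ge\max_k|x_k|$) is in fact the paper's argument: if $b_{j\mid i}(t)/t\to\gamma_j>0$, the limit identity $g_{\{i\}\cup\{\Vset\sm\{i\}\}}(1,\alpha_{j\mid i}+\gamma_j z_j,\bm\alpha_{\sm j\mid i})=1$ would hold for every $z_j$ in the support, contradicting the upper bound from $g\ge\|\cdot\|_\infty$; the ``standard fact'' route is not available here because $b_{j\mid i}=o(t)$ is a consequence of the margins, not a universal feature of the CE framework.

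For part~$(ii)$ you ultimately state the correct argument, but then talk yourself into believing there is a gap where there is none. Your closing sentence before the ``obstacle'' paragraph already closes the proof: part~$(i)$ puts $\bm\alpha_{\Vset\sm\{i\}\mid i}$ on the level set, the marginalization identity~\eqref{eq:min_gauge} together with $g\ge\max$ shows that the projection of that level set onto each coordinate $j$ is exactly $\{\widetilde\alpha: g_{\{i,j\}}(1,\widetilde\alpha)=1\}$, and~\eqref{eq:gauge_alpha_condition} identifies $\alpha_{j\mid i}$ as the maximum of that set. Hence the vector $\bm\alpha_{\Vset\sm\{i\}\mid i}$ is simultaneously on the level set \emph{and} coordinatewise maximal, so the coordinatewise supremum is attained at $\bm\alpha_{\Vset\sm\{i\}\mid i}=\bm\alpha^\star$ with no further structural input on $g$ required. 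This is precisely what the paper does (its displayed chain of equalities is establishing $g_{\{i,j\}}(1,\alpha_{j\mid i}^k)=1$ for each $k$, then invoking the convergence-to-types identification from Proposition~5(iii) of \citet{NoldeWadsworth22}). The compactness/convexity detour you propose in the last paragraph is not only unnecessary, it would not go through as stated: $g$ is $1$-homogeneous and continuous but is \emph{not} assumed convex (many of the example gauges, e.g.\ the inverted logistic, are concave on rays), so any argument relying on convexity of $g$ or of the slice $\{g(1,\cdot)\le 1\}$ is unsound. Drop that paragraph, promote your final sentence to the conclusion of part~$(ii)$, and make explicit the two ingredients (the ``projection equals bivariate solution set'' step via~\eqref{eq:min_gauge} and $g\ge\max$, and the max-characterization~\eqref{eq:gauge_alpha_condition}), and you have the paper's proof.
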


\begin{remark}
\label{rmk:alphamin}
Since $g(\bm x) \geq \max(x_i: i \in \Vset)$, Proposition~\ref{prop:joint_convergence} implies that
$\bm \alpha_{\Vset \sm \{i\}}$ is a global minimizer, that is,
$g_{\{i\}\cup \{\Vset\sm \{i\}\}}(1, \bm
\alpha_{\Vset\sm\{i\}\mid i}) \leq g_{\{i\}\cup
  \{\Vset\sm \{i\}\}}(1, \bm x)$ for all $\bm
x\in\RR_+^{d-1}$. 
\end{remark}

The proof of Proposition~\ref{prop:joint_convergence} is in
Appendix~\ref{sec:proofs}.

One of our main results, given in Proposition~\ref{prop:block_graph_alphas}, concerns the relationship between $\alpha$-coefficients when the random vector $\bm{X}$ follows a geometric extremal graphical model based on a block graph. For any pair of vertices $(i,j) \in \Vset$, we find that $\alpha_{j\mid i}$ is given by the product of pathwise $\alpha$-coefficients along the unique shortest path between $\{i\}$ and $\{j\}$. 
\begin{proposition}
  \label{prop:block_graph_alphas}
  Suppose that $\bm X= (X_i:i \in \Vset)$ follows a block geometric
  extremal graphical model with graph $\GG=(\Vset, \Eset)$, and that
  the assumptions in Proposition \ref{prop:joint_convergence} hold.\
  For any two distinct vertices $i$ and $j$ in $\Vset$, let
  $\{i=v_0, v_1,\ldots,v_{m_{i j}}=j\}$ be the vertices along the
  unique shortest path of length $m_{ij} \geq 1$ from $i$ to $j$. Then
  \begin{equation}
    \alpha_{j\mid i} = \prod_{k=1}^{m_{i j}}
    \alpha_{v_k\mid v_{k-1}}.
    \label{eq:product_alphas}
  \end{equation}
\end{proposition}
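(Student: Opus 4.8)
The plan is to prove the claim by induction on the path length $m_{ij}$, using Proposition~\ref{prop:joint_convergence}(i) together with the block-graph factorization~\eqref{eq:bgg} and the marginalization operation~\eqref{eq:min_gauge}. The base case $m_{ij}=1$ is exactly equation~\eqref{eq:gauge_alpha_condition} applied to the adjacent pair $(i,j)$, which holds under the bivariate version of the assumptions. For the inductive step, the key structural fact I would establish first is that for a block graph, the bivariate marginal gauge function $g_{\{i,j\}}$ of a non-adjacent pair, obtained by minimizing $g$ over all other coordinates, \emph{itself} decomposes along the shortest path: if $\{i=v_0,v_1,\ldots,v_{m}=j\}$ is that path, then
\begin{align}
  g_{\{i,j\}}(x_i,x_j) = \min_{x_{v_1},\ldots,x_{v_{m-1}}} \left\{ \sum_{k=1}^{m} g_{\{v_{k-1},v_k\}}(x_{v_{k-1}},x_{v_k}) - \sum_{k=1}^{m-1} |x_{v_k}| \right\}. \label{eq:pathgauge}
\end{align}
This is because minimizing the full block-graph gauge~\eqref{eq:bgg} over a coordinate $x_\ell$ not on the path only affects the cliques containing $\ell$, and since $\ell$ separates its "branch" from $\{i,j\}$ via a single separator vertex, the minimization over that whole branch collapses to $|x_s|$ for the separator $s$, cancelling the $-|x_s|$ term; iterating leaves only the chain of cliques along the path. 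I would prove~\eqref{eq:pathgauge} by induction on the number of off-path vertices, or equivalently invoke the fact that the induced subgraph on the shortest path plus its pendant branches is still a block graph and that marginalizing~\eqref{eq:min_gauge} commutes appropriately with~\eqref{eq:bgg}.

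Granting~\eqref{eq:pathgauge}, the shortest path from $i$ to $j$ with $\{v_1,\ldots,v_{m-1}\}$ intermediate vertices has the structure of a chain geometric extremal graphical model on those $m+1$ vertices. I would then apply Proposition~\ref{prop:joint_convergence}(i) to the random subvector $(X_{v_0},\ldots,X_{v_m})$ conditioning on $X_{v_0}=X_i$: this gives $g_{\{v_0\}\cup\{v_1,\ldots,v_m\}}(1,\alpha_{v_1\mid v_0},\ldots,\alpha_{v_m\mid v_0})=1$. Working with the chain gauge~\eqref{eq:cegm} for this subvector and evaluating at the point $(1,\alpha_{v_1\mid v_0},\ldots,\alpha_{v_m\mid v_0})$, the sum telescopes: each term $g_{\{v_{k-1},v_k\}}(\alpha_{v_{k-1}\mid v_0},\alpha_{v_k\mid v_0})$ must, by 1-homogeneity, equal $\alpha_{v_{k-1}\mid v_0}\, g_{\{v_{k-1},v_k\}}(1,\alpha_{v_k\mid v_0}/\alpha_{v_{k-1}\mid v_0})$. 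The crux is to argue that $\alpha_{v_k\mid v_0}/\alpha_{v_{k-1}\mid v_0} = \alpha_{v_k\mid v_{k-1}}$ (the adjacent-pair coefficient), so that $g_{\{v_{k-1},v_k\}}(1,\alpha_{v_k\mid v_{k-1}})=1$ by the base case and each chain term contributes exactly $\alpha_{v_{k-1}\mid v_0}$; then the telescoping identity forces consistency and delivers $\alpha_{v_m\mid v_0}=\prod_{k=1}^m \alpha_{v_k\mid v_{k-1}}$.

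More carefully, I expect the cleanest route is a direct induction on $m$ using Proposition~\ref{prop:joint_convergence} and Remark~\ref{rmk:alphamin}, which says $\bm\alpha_{\Vset\sm\{i\}\mid i}$ is the coordinate-wise maximal global minimizer of $g_{\{i\}\cup\{\Vset\sm\{i\}\}}(1,\cdot)$. Restricting attention to the subvector $(X_i, X_{v_1},\ldots,X_{v_{m-1}}, X_j)$ along the path, whose gauge is~\eqref{eq:pathgauge} expressed via~\eqref{eq:cegm}, I would use the marginalization~\eqref{eq:min_gauge} once more to pass from $g_{\{v_0,\ldots,v_m\}}$ down to $g_{\{v_0,v_{m-1},v_m\}}$ or directly characterize $\alpha_{v_m\mid v_0}$ via $\alpha_{v_{m-1}\mid v_0}$ and $\alpha_{v_m\mid v_{m-1}}$: the two-step relation $g_{\{v_0,v_m\}}(1,\cdot)$ is obtained by minimizing a sum of two bivariate gauges over the middle coordinate $x_{v_{m-1}}$, and evaluating the $\alpha$-condition~\eqref{eq:gauge_alpha_condition} on this composed gauge reduces, by homogeneity and the optimality/maximality in Remark~\ref{rmk:alphamin}, to $\alpha_{v_m\mid v_0} = \alpha_{v_{m-1}\mid v_0}\cdot \alpha_{v_m\mid v_{m-1}}$. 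Combined with the inductive hypothesis $\alpha_{v_{m-1}\mid v_0}=\prod_{k=1}^{m-1}\alpha_{v_k\mid v_{k-1}}$, this closes the induction.

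The main obstacle I anticipate is the careful justification of~\eqref{eq:pathgauge} — that marginalizing the block-graph gauge over all off-path vertices really does collapse each pendant branch to its separator's absolute-value term and leaves precisely the chain gauge along the shortest path. This requires using that the path is the \emph{unique shortest} path (so branches attach at single cut vertices and contain no shortcuts), the singleton-separator property of block graphs, and the general constraint $g_J(\bm x_J)\ge\max_{j\in J}|x_j|$ from~\eqref{eq:min_gauge} to show the minimizing value over a branch is attained with that branch "inactive". A secondary technical point is verifying that the subvector along the path inherits the hypotheses of Proposition~\ref{prop:joint_convergence} (density convergence~\eqref{eq:gauge_densconv} for marginals and Assumption~\ref{ass:ce_conditions}), which should follow from~\eqref{eq:min_gauge} and the fact that conditional extremes dependence functions for $(X_{v_{k-1}},X_{v_k})$ depend only on that pair; and ensuring the products of $\alpha$-coefficients stay within $[0,1]$ so that the composed points remain in the relevant domain, which is automatic since each factor lies in $[0,1]$.
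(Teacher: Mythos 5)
Your proposal identifies the correct structural ideas and is essentially a valid route, but it differs in organisation from the paper's own proof and has one genuine gap. On structure: you propose first collapsing the block-graph gauge to a chain along the unique shortest path (your equation~\eqref{eq:pathgauge}), and then running an induction on path length using homogeneity. The paper's own proof of Proposition~\ref{prop:block_graph_alphas} does not explicitly collapse to a chain; it orders the cliques $C_1,\dots,C_N$ by the running intersection property, evaluates $g_{\{i\}\cup\{\Vset\sm\{i\}\}}(1,\bm\alpha_{\Vset\sm\{i\}\mid i})=1$ directly against the block decomposition, and uses the termwise lower bound $g_C(\bm x_C)\ge\max_j|x_j|$ to force $g_{\{i\}\cup\{C_1\sm\{i\}\}}(1,\cdot)=1$ and $g_{D_k\cup\{C_k\sm D_k\}}(\alpha_{D_k\mid i},\cdot)-\alpha_{D_k\mid i}=0$ for each $k$, from which the recurrence $\alpha_{D_k\mid i}=\alpha_{D_{k-1}\mid i}\,\alpha_{D_k\mid D_{k-1}}$ follows. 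Your chain-reduction lemma is in fact proved separately in the paper (Lemma~\ref{lem:blockgraphtopath}) and used for the $\beta$-coefficient proof, so routing the $\alpha$-proof through it is a legitimate alternative; the paper's direct argument simply avoids needing that lemma at this stage.

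The genuine gap in your proposal is the degenerate case. Your key step — ``by $1$-homogeneity, $g_{\{v_{k-1},v_k\}}(\alpha_{v_{k-1}\mid v_0},\alpha_{v_k\mid v_0}) = \alpha_{v_{k-1}\mid v_0}\, g_{\{v_{k-1},v_k\}}(1,\alpha_{v_k\mid v_0}/\alpha_{v_{k-1}\mid v_0})$'' — requires $\alpha_{v_{k-1}\mid v_0}>0$, and you never address what happens when some path coefficient vanishes. This is not a fringe case: if $\alpha_{v_{k-1}\mid v_0}=0$ then the constraint $g_{\{v_{k-1},v_k\}}(0,\alpha_{v_k\mid v_0})=0$ together with $g\ge\max$ forces $\alpha_{v_k\mid v_0}=0$, but one must still verify that the product formula holds, i.e.\ that some edge factor $\alpha_{v_\ell\mid v_{\ell-1}}$ is zero. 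The paper handles this by letting $k^\star$ be the first index at which $\alpha_{D_{k^\star}\mid i}=0$, applying the factorization up to $k^\star-1$ where the ratios are well defined, and deducing $\alpha_{D_{k^\star}\mid D_{k^\star-1}}=0$ so the product representation survives with a zero factor. Without this, your induction does not close when a zero appears. A secondary point worth tightening (also treated briskly in the paper) is that the ratio $\alpha_{v_k\mid v_0}/\alpha_{v_{k-1}\mid v_0}$ satisfying $g_{\{v_{k-1},v_k\}}(1,\cdot)=1$ must be shown to equal the \emph{rightmost} such solution $\alpha_{v_k\mid v_{k-1}}$; this follows from the coordinate-wise maximality in Proposition~\ref{prop:joint_convergence}(ii), which you invoke but should make explicit, since a strictly smaller solution would otherwise be consistent with the chain constraint.
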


A proof of Proposition~\ref{prop:block_graph_alphas} is in
Appendix~\ref{sec:proofs}. We finish this section by illustrating Proposition~\ref{prop:block_graph_alphas} with a simple example. The intuition that may be obtained from this example underlies the general proof.

\begin{example}\normalfont
  Consider the tree geometric extremal graphical model, as defined in equation~\eqref{eq:tegm}, with $\Vset=\{1,2,3,4\}$, $\mathcal{C}=\mathcal{E} = \{(1,2),(2,3),(2,4)\}$, and $\mathcal{D}=\{2,2\}$. Recalling that for exponential margins, $x_i \geq 0$ for all $i \in \Vset$, we have
  \begin{align*}
    g(x_1,x_2,x_3,x_4) = g_{\{1,2\}}(x_1,x_2)+g_{\{2,3\}}(x_2,x_3) +
    g_{\{2,4\}}(x_2,x_4) - 2x_2.
  \end{align*}
  Suppose that we wish to calculate the coefficient $\alpha_{4|3}$. We
  start by noting that
  \begin{align}
    g_{\{3,4\}}(x_3,x_4) = \min_{x_1\geq 0, x_2 \geq 0}  g_{\{1,2\}}(x_1,x_2)+ g_{\{2,3\}}(x_2,x_3) + g_{\{2,4\}}(x_2,x_4) - 2x_2. \label{eq:alphaeg1}
  \end{align}
Consider specifically $g_{\{3,4\}}(1,\alpha_{4|3}) = 1$. Then we can write
  \begin{align*}
    g_{\{3,4\}}(1,\alpha_{4|3}) = g_{\{1,2\}}(x_1^\star,x_2^\star)+ g_{\{2,3\}}(x_2^\star,1) + g_{\{2,4\}}(x_2^\star,\alpha_{4|3}) - 2x_2^\star,
  \end{align*}
  where $x_1^\star,x_2^\star$ represent the values at which the minimum in equation~\eqref{eq:alphaeg1} is
  achieved for $x_3=1, x_4=\alpha_{4|3}$. Using the observation in Remark~\ref{rmk:alphamin}, $x_{1}^\star = \alpha_{1|3}$ and $x_2^\star = \alpha_{2|3}$, i.e.,
    \begin{align*}
    g_{\{3,4\}}(1,\alpha_{4|3}) = g_{\{1,2\}}(\alpha_{1|3},\alpha_{2|3})+ g_{\{2,3\}}(\alpha_{2|3},1) + g_{\{2,4\}}(\alpha_{2|3},\alpha_{4|3}) - 2\alpha_{2|3}.
  \end{align*}
  We split the right-hand side into a sum
  of two components:
  \begin{align}
    g_{\{2,3\}}(\alpha_{2|3},1) &= 1 \label{eq:comp1}\qquad\mbox{and}\\
    g_{\{1,2\}}(\alpha_{1|3},\alpha_{2|3})+ g_{\{2,4\}}(\alpha_{2|3},\alpha_{4|3}) - 2\alpha_{2|3} &\geq 0. \label{eq:comp2}
  \end{align}
  Equation~\eqref{eq:comp1} holds due to equation~\eqref{eq:gauge_alpha_condition}; equation~\eqref{eq:comp2} arises
  since $g_{\{1,2\}}(\alpha_{1|3},\alpha_{2|3}) \geq \alpha_{2|3}$ and
  $g_{\{2,4\}}(\alpha_{2|3},\alpha_{4|3}) \geq \alpha_{2|3}$. However, because we require $g_{\{3,4\}}(1,\alpha_{4|3}) = 1$, we must have
  that~\eqref{eq:comp2} equals 0. Suppose initially that $\alpha_{2|3} > 0$, so that using homogeneity of $g$,
  \begin{align*}
   g_{\{1,2\}}(\alpha_{1|3},\alpha_{2|3})+ g_{\{2,4\}}(\alpha_{2|3},\alpha_{4|3}) - 2\alpha_{2|3}  &= \alpha_{2|3} g_{\{1,2\}}(\alpha_{1|3}/\alpha_{2|3},1)+ \alpha_{2|3} g_{\{2,4\}}(1,\alpha_{4|3}/\alpha_{2|3}) - 2\alpha_{2|3} \\
& = 0.
  \end{align*}
  From this we deduce
  $\alpha_{1|3}/\alpha_{2|3} = \alpha_{1|2}$, and
  $\alpha_{4|3}/\alpha_{2|3} = \alpha_{4|2}$, i.e.,
  $\alpha_{4|3} = \alpha_{4|2} \alpha_{2|3}$, which is
  the product of pathwise coefficients. If we have $\alpha_{2|3}=0$ then 
    \begin{align*}
   g_{\{1,2\}}(\alpha_{1|3},\alpha_{2|3})+ g_{\{2,4\}}(\alpha_{2|3},\alpha_{4|3}) - 2\alpha_{2|3} &=  g_{\{1,2\}}(\alpha_{1|3},0)+ g_{\{2,4\}}(0,\alpha_{4|3}) = 0,
  \end{align*}
  which implies $g_{\{1,2\}}(\alpha_{1|3},0)=g_{\{2,4\}}(0,\alpha_{4|3})=0$, but since $g_{\{2,4\}}(0,\alpha_{4|3}) \geq \max(0, \alpha_{4|3})$ this implies $\alpha_{4|3}=0$ (similarly $\alpha_{1|3}$).
\end{example}

\subsubsection{\texorpdfstring{$\beta$}{b}-coefficients}
\label{sec:beta_coefficients}
The $\alpha$-coefficients described in Section~\ref{sec:alpha_coeffs} are
the key descriptors of the location normalization functions
$\bm{a}_{\Vset \sm \{i\}\mid i}$ in the conditional extremes convergence
assumption~\eqref{eq:ce_dist_conv}. The scale normalization functions
$\bm{b}_{\Vset \sm \{i\}\mid i}$ are generally characterized through a
regular variation assumption, and the key dependence quantity is the
index of regular variation. If $u:\mathbb{R}^+\to\mathbb{R}^+$ is a regularly varying
function at infinity with index $\gamma \in \mathbb{R}$, then for
$x>0$, $\lim_{t\to \infty} u(tx)/u(t) = x^\gamma$. We write
$u \in \RV_\gamma^\infty$. Regular variation at $0^+$ is defined similarly, and we
write $u \in \RV_\gamma^{0^+}$. The vector of scale normalization functions
$\bm{b}_{\Vset \sm \{i\}\mid i}$ is assumed to have components
$b_{j|i} \in \RV^\infty_{\beta_{j|i}}$ for $\beta_{j|i} \in [0,1)$. The focus of this
section is the structure of these $\beta$-coefficients along the
graph. 
The related analysis is considerably more complicated than the case of
the $\alpha$-coefficients. 
\color{black}
We use a
key result from \cite{NoldeWadsworth22}, who demonstrated that under a
bivariate version of convergence \eqref{eq:gauge_densconv}, the
function $g_{\{i,j\}}$ can determine the coefficient $\beta_{j\mid i}$ in the
sense that if
\begin{equation}
  g_{\{i,j\}}(1, \alpha_{j\mid i} + \cdot)-1\in
  \text{RV}_{1/(1-\beta_{j\mid i})}^{0^+},
  \label{eq:original_beta_condition}
\end{equation}
then $b_{j\mid i}\in \text{RV}_{\beta_{j\mid i}}^\infty$.\

In Proposition \ref{prop:block_graph_betas} below, we show that
$\beta_{j\mid i}$ also conforms to a structure along the shortest path
in $\GG$, but its form can be much more intricate due to the
dependence on the $\alpha$-coefficients along this path. This dependency
complicates the structure of $\beta_{j\mid i}$ beyond the
straightforward product form identified in
Proposition~\ref{prop:block_graph_alphas} for $\alpha_{j\mid i}$. However,
the analysis of $\alpha$-coefficients can inspire a strategy for addressing
the complexity of $\beta$-coefficients, leading to the development of a
relatively simple recurrence relation, given in equation
\eqref{eq:beta_recurrence} of
Proposition~\ref{prop:block_graph_betas}. We briefly explain this
strategy through a revised analysis of $\alpha$-coefficients.

Due to the marginalization properties of graphical gauge functions,
which mirror those seen in the marginalization of probabilistic
graphical models \citep{Koster02}, the form for
$\alpha_{j\mid i}$ can be formally written as the solution to the
recurrence relation
$\alpha_{j\mid i} = \alpha_{\pi\mid i} \alpha_{j\mid \pi}$, with initial condition
$\alpha_{i\mid i} = 1$, where $\pi$ denotes the penultimate node in
the shortest path from $i$ to $j$ in $\GG$. Specifically, in
Lemma~\ref{lem:blockgraphtopath} of Appendix~\ref{app:proofbetas}, we
show that for block graphs, we can express $g_{\{i,j\}}(x_i,x_j)$ in
terms of a chain graph with nodes lying on the shortest path between
$i$ and $j$, while in Lemma~\ref{lem:graph_marginalization}, we show
that marginalizing a chain graph also gives a chain
graph. Consequently, when $\GG$ is a block graph, then marginalizing
over all nodes except for $(i$, $\pi$,
$j)$ 
results in the chain graphical gauge
\begin{equation}  
  g_{\{i, \pi, j\}}(x_i, x_\pi, x_j)= g_{\{i, \pi\}}(x_{i}, x_{\pi}) +
  g_{\{\pi, j\}}(x_{\pi}, x_{j}) - x_{\pi},
  \label{eq:chain_gauge}
\end{equation}
which, by virtue of Proposition \ref{prop:block_graph_alphas}, shows
formally why the form of $\alpha_{j\mid i}$ also arises as the solution
of the aforementioned recurrence relation. These structural
properties facilitate a method of building the
recurrence relation \eqref{eq:beta_recurrence} in Proposition~\ref{prop:block_graph_betas}. The ingredients are equation~\eqref{eq:original_beta_condition}, the relation
\begin{equation}
  g_{\{i, j\}}(1, \alpha_{j\mid i} + x)= \min_{\varepsilon \in \mathcal{S}-\{\alpha_{\pi\mid i}\}} g_{\{i,\pi,
    j\}}(1,\alpha_{\pi\mid i} + \varepsilon, \alpha_{j\mid i} + x),
  \label{eq:min_trivariate_gauge}
\end{equation}
where
$\mathcal{S}-\{\alpha_{k\mid i}\} = \{x_k-\alpha_{k\mid i}\,:\, x_k \in \mathcal{S}\}$, which
follows from \eqref{eq:min_gauge}, and Assumption~\ref{ass:edge-RV}
below, which imposes a simple edge‑level regularity and assigns each
edge a single scaling exponent.
\begin{assumption}
  \label{ass:edge-RV}
  For each edge $\{v,v'\}\in\Eset$, with $\alpha_{v'\mid v}$ the rightmost
  minimizer of $y\mapsto g_{\{v,v'\}}(1,y)$, there exists
  $\beta_{v'\mid v}\in[0,1)$ such that
  $g_{\{v,v'\}}(1,\alpha_{v'\mid v}+ \cdot )-1\in \RV_{\sigma_{v'\mid v}}^{0}$ with
  $\sigma_{v'\mid v}:=1/(1-\beta_{v'\mid v})\ge 1$. Moreover,
  $g_{\{v,v'\}}$ is positive, continuous, $1$-homogeneous, satisfying
  $g_{\{v,v'\}}(x,y)\ge\max(x,y)$, and the identity
  \eqref{eq:chain_gauge} holds.
\end{assumption}
Near the contact point where the unit level set of $g_{\{v,v'\}}$ intersects with the upper bound $\{(x,y):\max(x,y)=1\}$ at $(1,\alpha_{v'\mid v})$, the bivariate gauge increases in a fixed power‑law manner. This
gives each edge a single scaling exponent $\beta_{v'\mid v}$. Together with
the chain identity \eqref{eq:chain_gauge}, these edge exponents
combine along paths and determine the pairwise exponent
$\beta_{j\mid i}$ via the recurrence \eqref{eq:beta_recurrence}.\color{black}

\begin{proposition}
  \label{prop:block_graph_betas}
  Suppose that $\bm X = (X_i:i \in \Vset)$ follows a geometric extremal graphical model
  relative to a block graph $\GG=(\Vset, \Eset)$, and that Assumption~\ref{ass:edge-RV} holds.\
  For any two distinct vertices $i$ and $j$ in $\Vset$, let
  $\{i=v_0, v_1,\ldots,v_{m_{i j}-1}=\pi, v_{m_{i j}}=j\}$ be the
  vertices along the unique shortest path of length $m_{ij} \geq 2$ from
  $i$ to $j$. Then,
  $g_{\{i, j\}}(1,\alpha_{j\mid i}+ \cdot )-1\in \RV_{\sigma_{j\mid
      i}}^{0^+}$ where $\sigma_{j\mid i}=1/(1-\beta_{j\mid i})$ with
  \begin{IEEEeqnarray}{rCl}
    \beta_{j\mid i} &=&
    \begin{dcases} 
      \max(\beta_{\pi\mid i},
      \beta_{j\mid \pi}) & \text{for }\alpha_{\pi\mid i} \neq 0 \text{~and~} \alpha_{j\mid
                  \pi} \neq 0, \\
      \beta_{\pi\mid i}  & \text{for }\alpha_{\pi\mid i} = 0 \text{~and~} \alpha_{j\mid
                       \pi} \neq 0, \\
      \beta_{j\mid \pi} & \text{for }\alpha_{\pi\mid i} \neq 0 \text{~and~} \alpha_{j\mid
                 \pi} = 0,\\
      \beta_{\pi\mid i} \beta_{j\mid \pi} & \text{for }\alpha_{\pi\mid i} = 0 \text{~and~}
                               \alpha_{j\mid \pi} = 0.
    \end{dcases}
    \label{eq:beta_recurrence}
  \end{IEEEeqnarray}
\end{proposition}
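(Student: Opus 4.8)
\emph{Strategy.} The plan is to prove the statement by induction on the path length $m_{ij}$, reducing each step to a single edge appended to a shorter path and then to a one-dimensional minimisation of regularly varying functions. The base case $m_{ij}=1$ is an edge and is exactly Assumption~\ref{ass:edge-RV}. For $m_{ij}\ge 2$, with penultimate node $\pi=v_{m_{ij}-1}$, the inductive hypothesis asserts that the marginal gauge $g_{\{i,\pi\}}$ enjoys all the properties listed in Assumption~\ref{ass:edge-RV} with exponent $\beta_{\pi\mid i}$ — in particular $g_{\{i,\pi\}}(1,\alpha_{\pi\mid i}+\cdot)-1\in\RV^{0^+}_{\sigma_{\pi\mid i}}$, $\sigma_{\pi\mid i}=1/(1-\beta_{\pi\mid i})$, with $\alpha_{\pi\mid i}$ the rightmost minimiser of $g_{\{i,\pi\}}(1,\cdot)$ — while Assumption~\ref{ass:edge-RV} supplies the analogous property for the edge $\{\pi,j\}$, and Proposition~\ref{prop:block_graph_alphas} gives $\alpha_{j\mid i}=\alpha_{\pi\mid i}\alpha_{j\mid \pi}$. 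By Lemmas~\ref{lem:blockgraphtopath} and~\ref{lem:graph_marginalization}, marginalising the block graph onto $\{i,\pi,j\}$ yields the chain gauge~\eqref{eq:chain_gauge}, and then~\eqref{eq:min_gauge} gives~\eqref{eq:min_trivariate_gauge}. It therefore suffices to prove the one-step recurrence~\eqref{eq:beta_recurrence} for the chain $i$--$\pi$--$j$, with two bivariate gauges as inputs, each positive, continuous, $1$-homogeneous, bounded below by the coordinate maximum, and with regularly varying excess over $1$ at its contact point.

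\emph{The one-dimensional minimisation.} Write $w$ for the $\pi$-coordinate in~\eqref{eq:min_trivariate_gauge}. When $\alpha_{\pi\mid i}>0$, substituting $w=\alpha_{\pi\mid i}+\varepsilon$ and using $1$-homogeneity of $g_{\{\pi,j\}}$ reduces the problem to
\[
  g_{\{i,j\}}(1,\alpha_{j\mid i}+x)-1 \;=\; \inf_{\varepsilon\,\ge\,-\alpha_{\pi\mid i}} \Bigl[\,\phi_1(\varepsilon)+(\alpha_{\pi\mid i}+\varepsilon)\,\phi_2\bigl(\tfrac{x-\alpha_{j\mid \pi}\,\varepsilon}{\alpha_{\pi\mid i}+\varepsilon}\bigr)\Bigr],
\]
where $\phi_1=g_{\{i,\pi\}}(1,\alpha_{\pi\mid i}+\cdot)-1\in\RV^{0^+}_{\sigma_{\pi\mid i}}$ and $\phi_2=g_{\{\pi,j\}}(1,\alpha_{j\mid \pi}+\cdot)-1\in\RV^{0^+}_{\sigma_{j\mid \pi}}$; when $\alpha_{\pi\mid i}=0$ one instead keeps $g_{\{\pi,j\}}(w,x)$ unexpanded and minimises $\phi_1(w)+g_{\{\pi,j\}}(w,x)-w$ over $w\ge 0$. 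I would first localise the minimiser: since $\phi_1,\phi_2\ge 0$, since each $\phi_i$ is strictly positive and continuous off its minimiser and grows at least linearly far from it (using $g\ge\max$), and since moving $\varepsilon$ left of $0$ enlarges the argument of $\phi_2$ without decreasing $\phi_1$, for all small $x$ the infimum is attained at a point tending to the relevant contact value and lying in the region where the Potter bounds for $\phi_1$ and $\phi_2$ are available.

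\emph{The four cases and the regular-variation conclusion.} Inside that region I would obtain matched upper and lower bounds by exhibiting an explicit near-optimal $\varepsilon$ (respectively $w$) for the upper bound and by bounding the objective below via Potter inequalities, the dichotomy being how the ``mass'' $x$ splits between the two edges. If $\alpha_{\pi\mid i},\alpha_{j\mid\pi}\ne 0$: $\varepsilon=0$ gives an upper bound of order $x^{\sigma_{j\mid\pi}}$, $\varepsilon=x/\alpha_{j\mid\pi}$ one of order $x^{\sigma_{\pi\mid i}}$, and analysing the constrained minimisation of the two power-like terms shows the cheaper exponent contributes negligibly, so the excess is regularly varying of index $\max(\sigma_{\pi\mid i},\sigma_{j\mid\pi})$, i.e.\ $\beta_{j\mid i}=\max(\beta_{\pi\mid i},\beta_{j\mid\pi})$. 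If exactly one of $\alpha_{\pi\mid i},\alpha_{j\mid\pi}$ vanishes, the corresponding edge is traversed at zero cost along its marginal-minimising ray (on which either $g_{\{\pi,j\}}(w,x)-w=0$ or $\phi_2$ is evaluated at $0$), so only the surviving exponent is passed along. If both vanish, then $\alpha_{j\mid i}=0$ and the objective has the form $\phi_1(w)+w\,\phi_2(x/w)$; optimising over $w$ balances the two terms at scale $x^{\,\sigma_{\pi\mid i}\sigma_{j\mid\pi}/(\sigma_{\pi\mid i}+\sigma_{j\mid\pi}-1)}$, and writing $\sigma=1/(1-\beta)$ identifies this exponent with $1/(1-\beta_{\pi\mid i}\beta_{j\mid\pi})$. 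In every case, substituting $\varepsilon$ (or $w$) $=x^{\theta}s$ for the scaling exponent $\theta$ just identified, factoring out the leading power $x^{\sigma_{j\mid i}}$, and invoking the uniform convergence theorem for slowly varying functions on a fixed compact $s$-set shows the residual infimum is slowly varying in $x$, so $g_{\{i,j\}}(1,\alpha_{j\mid i}+\cdot)-1$ is genuinely in $\RV^{0^+}_{\sigma_{j\mid i}}$, closing the induction.

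\emph{Main obstacle.} I expect the delicate part to be making the asymptotic minimisation rigorous: controlling the location of the minimiser uniformly in $x$, ruling out escape to regions where only the crude bound $g\ge\max$ is known, and propagating two distinct slowly varying factors — one from each edge — through the infimum when upgrading the order estimates to a clean regular-variation statement. The heuristic power-law computation that yields~\eqref{eq:beta_recurrence} is transparent; the genuine work lies in these uniformity and bookkeeping issues, together with the minor degenerate sub-cases in which an edge exponent equals $1$ (so some $\beta_{v'\mid v}=0$), where the corresponding $\phi$ is only asymptotically linear and the argument simplifies but must be stated separately.
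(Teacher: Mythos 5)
Your proposal follows essentially the same route as the paper: an induction on path length using Lemmas~\ref{lem:blockgraphtopath} and~\ref{lem:graph_marginalization} to reduce to the chain gauge~\eqref{eq:chain_gauge}, with the inductive step carried out as a one-dimensional minimisation of two regularly varying objective components (Lemma~\ref{lem:two-edge-prop}), localised via Potter bounds and the uniform convergence theorem, split into the same four cases by the vanishing pattern of $(\alpha_{\pi\mid i},\alpha_{j\mid\pi})$, with the balanced scale $x^{\sigma_a\sigma_b/(\sigma_a+\sigma_b-1)}$ in the doubly-degenerate case and a separate treatment when some edge exponent equals $1$. The details you flag as the ``genuine work'' (uniform localisation of the minimiser, propagating slowly varying factors, and the boundary $\sigma=1$ subcases) are exactly where the paper's proof spends its effort.
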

\noindent
A proof of Proposition~\ref{prop:block_graph_betas} is in
Appendix~\ref{app:proofbetas}. Two special cases of this proposition are worth highlighting:
\[
  \beta_{j\mid i} = \max_{k=1,\dots, m_{i j}} \beta_{v_k\mid v_{k-1}},\qquad
  \text{when $\min_{k=1,\dots, m_{i j}} \alpha_{v_k \mid v_{k-1}} > 0$}
\]
and
\[
  \beta_{j\mid i} = \prod_{k=1}^{m_{i j}} \beta_{v_k\mid v_{k-1}},\qquad
  \text{when $\max_{k=1,\dots, m_{i j}} \alpha_{v_k \mid v_{k-1}} = 0$}.
\]

\noindent
We illustrate Proposition~\ref{prop:block_graph_betas} with an example.
\begin{example}
\label{ex:betas}
    We consider a chain graph with four nodes and various parametric gauge functions along the three edges $\Eset = \{(1,2),(2,3),(3,4)\}$. By Lemma~\ref{lem:blockgraphtopath} this is equivalent to considering any nodes separated by a path of length three along a block graph. The unit level set of the gauge functions $g_{\{1,4\}}(x_1,x_4)$, obtained via numerical minimization, are displayed in Figure~\ref{fig:betaillustrations}. To illustrate the $\beta$ parameter more clearly, the bottom row of Figure~\ref{fig:betaillustrations}  displays $\log(g_{\{1,4\}}(1,\alpha_{4|1}+x)-1)$ against $\log(x)$ for $x>0$ small, with a superimposed slope of $1/(1-\beta_{4|1})$. In each of the cases below, the title gives gauges in the order $g_{\{1,2\}}$ -- $g_{\{2,3\}}$ -- $g_{\{3,4\}}$. The pairwise gauge functions used are given in Table~\ref{tab:pairwisegauges}. These gauge functions satisfy Assumption~\ref{ass:edge-RV} with $\alpha$ and $\beta$ values given in the final two columns of Table~\ref{tab:pairwisegauges}. All pairwise gauges in Table~\ref{tab:pairwisegauges} are symmetric in their arguments, meaning that for $(k,l) \in \mathcal{E}$,
    $\alpha_{k|l}=\alpha_{l|k}=\alpha$, $\beta_{k|l}=\beta_{l|k}=\beta$.
    \vspace{0.2cm}

\begin{table}
    \centering
        \caption{Pairwise gauge functions used in Example~\ref{ex:betas}.}
    \label{tab:pairwisegauges}
\begin{tabular}{lllll}
\textbf{Name} & $g(x_1,x_2)$ & Range & $\alpha$ & $\beta$\\\hline
  \textbf{Logistic}   &  $(x_1+x_2)/\theta+(1-2/\theta)\min(x_1,x_2)$ & $\theta \in (0,1)$ & $1$ & $0$\\
\textbf{Gaussian} & $(x_1+x_2-2\rho(x_1x_2)^{1/2})/(1-\rho^2)$ & $\rho\in [0,1)$ & $\rho^2$ & $1/2$\\
\textbf{Inverted Logistic}& $(x_1^{1/\theta}+x_2^{1/\theta})^{\theta}$ & $\theta \in (0,1]$ & $0$ &$1-\theta$\\
\textbf{Square} & $\max\{(x_1 - x_2)/\theta, (x_2 - x_1)/\theta, (x_1 + x_2)/(2 - \theta)\}$ & $\theta\in(0,1)$ & $1-\theta$ &$0$
\end{tabular}

\end{table}

\noindent
    \textbf{(a): Logistic -- Gaussian -- Logistic}. We have $\alpha_{4|1}=\alpha_{4|3}\alpha_{3|2}\alpha_{2|1} = 1 \times \rho_{23}^2 \times 1 = \rho_{23}^2 $. Taking $\rho_{23}>0$, all $\alpha$-coefficients are positive, giving $\beta_{4|1} = \max\{\beta_{4|3},\beta_{3|1}\}$, where $\beta_{3|1} = \max\{\beta_{3|2},\beta_{2|1}\} = \max\{1/2,0\}=1/2$. Hence $\beta_{4|1} = \max\{0,1/2\} = 1/2$.\vspace{0.2cm}

    \noindent
    \textbf{(b): Gaussian -- Gaussian -- Inverted Logistic}. Here $\alpha_{4|1}=\alpha_{4|3}\alpha_{3|2}\alpha_{2|1} = 0 \times \rho_{12}^2 \times \rho_{23}^2 = 0$. Since $\alpha_{4|3}=0$, $\beta_{4|1}=\beta_{4|3} = 1-\theta_{34}$, where $\theta_{34}\in(0,1]$ is the inverted logistic dependence parameter. \vspace{0.2cm}

    \noindent
    \textbf{(c): Inverted Logistic -- Logistic -- Inverted Logistic}.  Here $\alpha_{4|1}=\alpha_{4|3}\alpha_{3|2}\alpha_{2|1} = 0 \times 1 \times 0 = 0$. Since both $\alpha_{4|3}=0$ and $\alpha_{3|1}=\alpha_{3|2}\alpha_{2|1}=0$, $\beta_{4|1}=\beta_{4|3}\beta_{3|1}$. As in (b), $\beta_{4|3} = 1-\theta_{34}$, while $\beta_{3|1} = \beta_{2|1} = 1-\theta_{12}$ since $\alpha_{3|2}>0$ and $\alpha_{2|1}=0$. Overall therefore $\beta_{4|1}= (1-\theta_{34})\times(1-\theta_{12})$.\vspace{0.2cm}

    \noindent
    \textbf{(d): Logistic -- Square -- Square}. Here $\alpha_{4|1}=\alpha_{4|3}\alpha_{3|2}\alpha_{2|1} =  (1-\theta_{23}) \times (1-\theta_{34}) \times 1 > 0$. Since all $\alpha$-coefficients are positive, $\beta_{4|1} = \max\{\beta_{4|3},\beta_{3|1}\} = \max\{\beta_{4|3},\max\{\beta_{3|2},\beta_{3|1}\}\} = \max\{0,0,0\} = 0$. 

    \begin{figure}
      \centering    \includegraphics[width=0.24\linewidth]{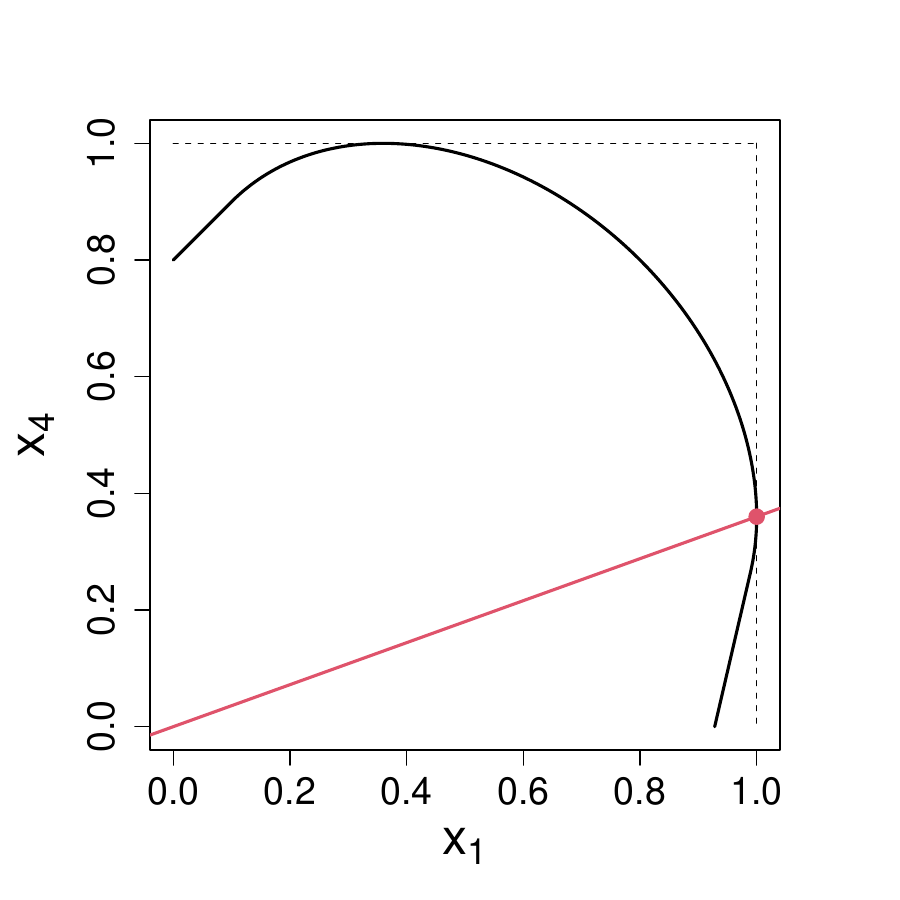}
      \includegraphics[width=0.24\linewidth]{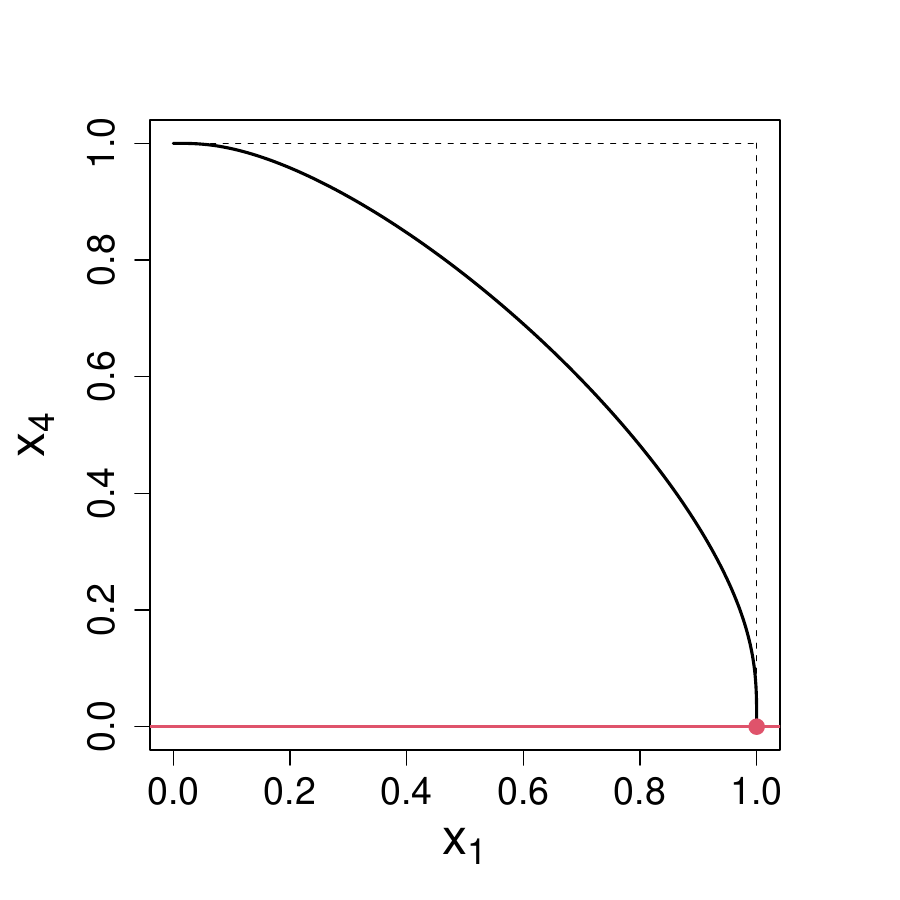}
      \includegraphics[width=0.24\linewidth]{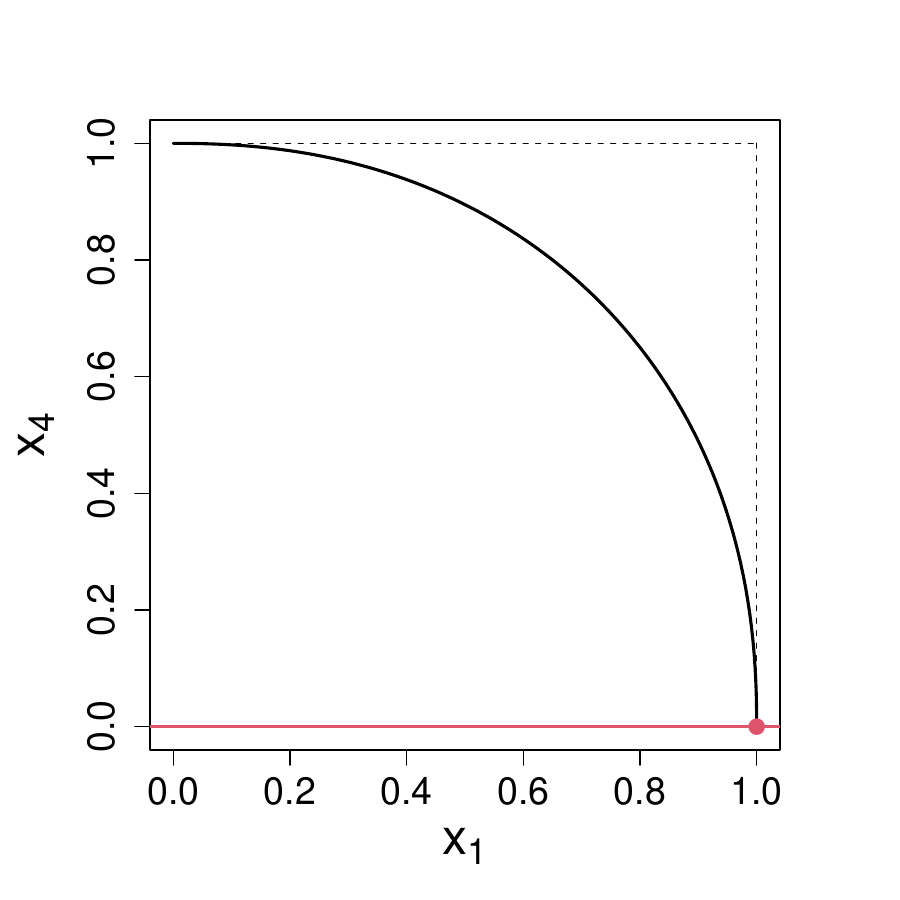}    \includegraphics[width=0.24\linewidth]{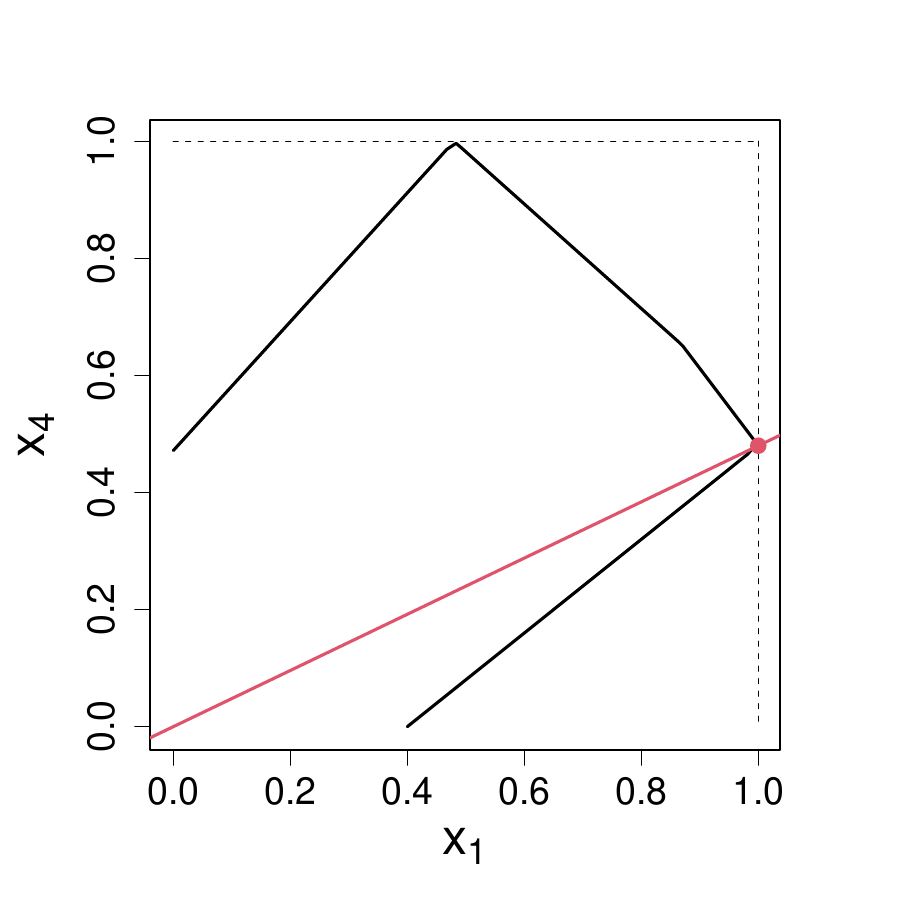}\\
      \includegraphics[width=0.24\linewidth]{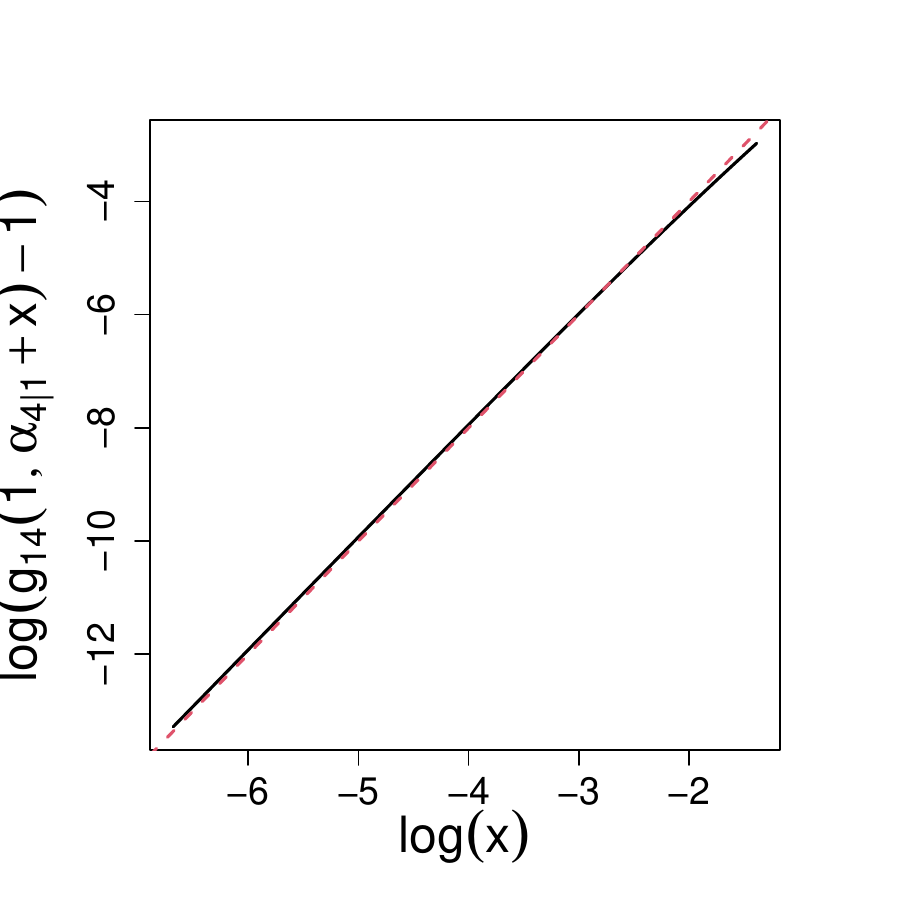}
      \includegraphics[width=0.24\linewidth]{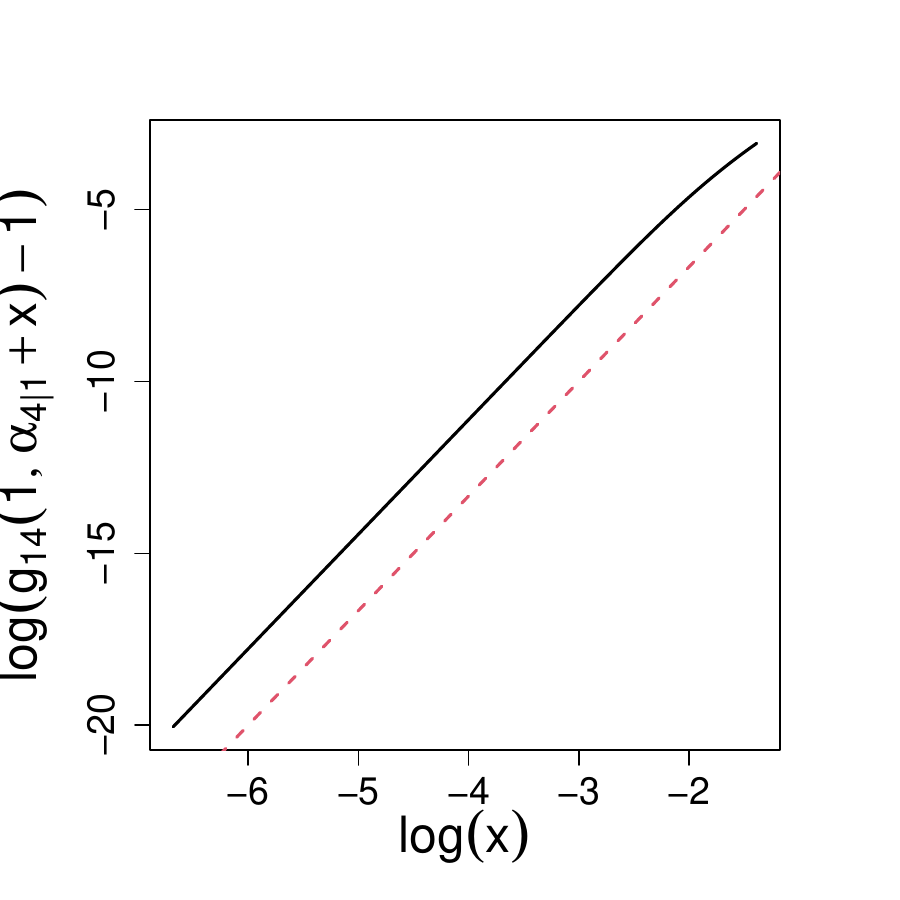}
      \includegraphics[width=0.24\linewidth]{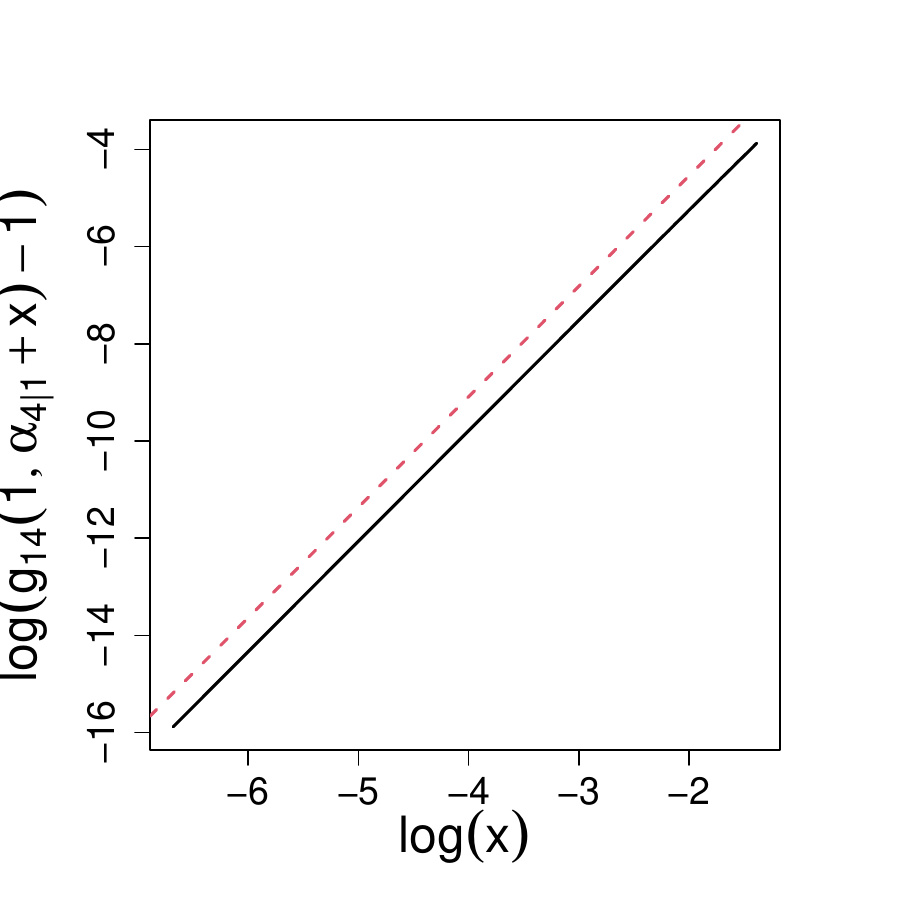}    \includegraphics[width=0.24\linewidth]{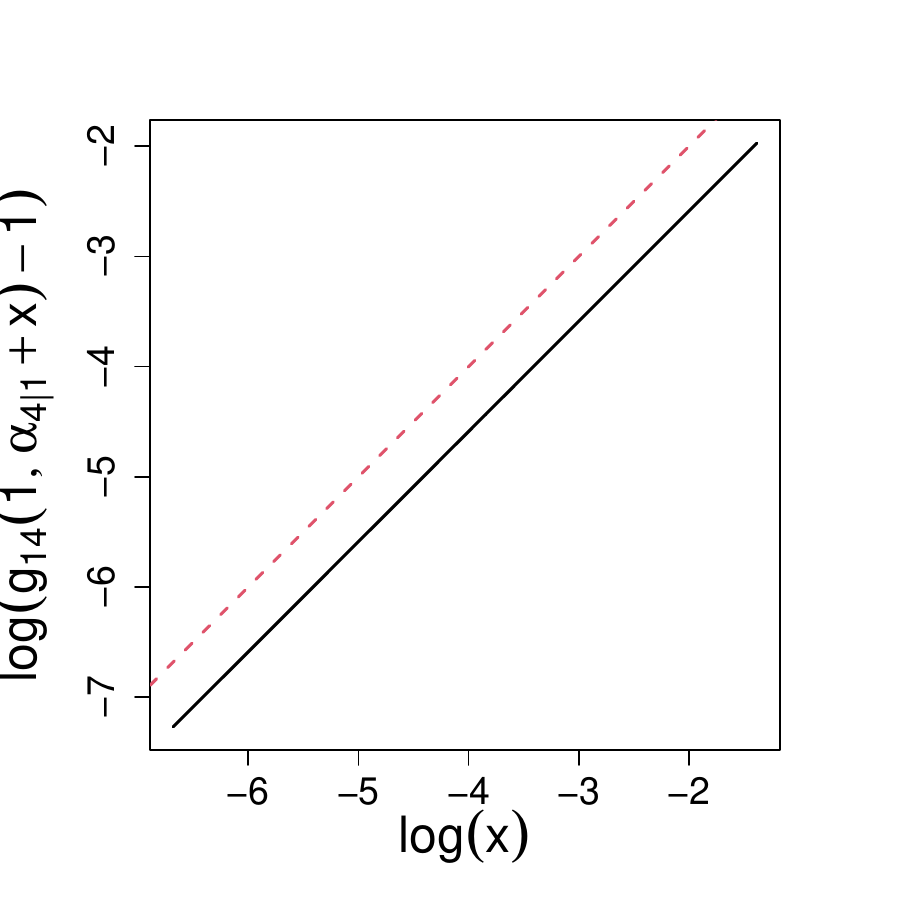}
      \caption{Top row: Illustrations of the unit level set of $g_{\{1,4\}}(x_1,x_4)$. The red dot illustrates the point $(1,\alpha_{4|1})$. Bottom: Illustration of $\log(g_{\{1,4\}}(1,\alpha_{4|1}+x)-1)$ (black solid line) and line with slope $1/(1-\beta_{4|1})$ (red dashed line). The intercept of the red dashed line is always 0; only the slope is compared to the black line, for small $x$. Left--right: Examples (a), (b), (c), (d). Relevant parameters in (b) are: $\theta_{34}=0.3$, $\beta_{4|3}=0.7$; in (c): $\theta_{12}=0.3$, $\theta_{34}=0.2$, $\beta_{4|1}=0.7 \times 0.8 = 0.56$.}
      \label{fig:betaillustrations}
    \end{figure}
    
  \end{example}
  
\subsection{Laplace margins}
\label{sec:deplap}
With geometric extremes, it is natural to work in Laplace margins when there is negative association between at least one pair of variables. Similar results apply as in the exponential case, but we also find relevance in what happens when some variables are small, and therefore require a modest amount of new definitions and notation.

\subsubsection{Conditional extremes assumptions and notation}
\label{sec:cexassnot2}

In this section, we denote the vector $\bm{\alpha}_{\Vset \sm \{i\} \mid i}$ defined in Assumption~\ref{ass:ce_conditions} as $\bm{\alpha}_{\Vset \sm \{i\} \mid i}^+ \in [-1,1]^{d-1}$, and the limit distribution as $K^{+}_{\Vset \sm \{i\}}$. We introduce an analogous Assumption~\ref{ass:ce_conditions_sgn}, and Proposition~\ref{prop:joint_convergence_sgn}, which include conditioning upon each variable to be negatively large. In the following, Assumptions, Propositions and Remarks have an alternative title based on the analogous statement from Section~\ref{sec:depexp}.

\begin{assumption}[Assumption \ref{ass:ce_conditions}$\pm$]
  \label{ass:ce_conditions_sgn}
  Let $\sgn = +$ or $\sgn=-$. For all $i \in \Vset$,
  \begin{equation}
    \frac{\partial^{d-1}}{\partial \bm z} \PP\left(\frac{\bm X_{-i} -
        \bm a^{\sgn}_{\Vset \sm \{i\}\mid i}(\level)}{\bm b^{\sgn}_{\Vset \sm \{i\}\mid i }(\level)}
      \leq \bm z \mid X_{i} = {\sgn} \times \level\right)
    \to \frac{\partial^{d-1}}{\partial \bm z}\,K^{\sgn}_{\Vset \sm \{i\} \mid i}(\bm
    z) =: k^{\sgn}_{\Vset \sm \{i\}\mid i}(\bm z), \quad \text{as $\level\to \infty$.}
    \label{eq:ce_densconvpm}
  \end{equation}

  Additionally,
  \begin{itemize}[wide=0\parindent]
  \item[$(i)$] for $j\in\Vset \sm \{i\}$, the domain of the marginal distribution $K^{\sgn}_{j\mid i}$
    of $K^{\sgn}_{\Vset \sm \{i\} \mid i}$ includes $(0,\infty)$; 
  \item[$(ii)$]
    $\bm \alpha^{\sgn}_{\Vset \sm \{i\} \mid i}:=\lim_{\level \to
      \infty} \bm a^{\sgn}_{\Vset \sm \{i\} \mid i}(t)/t$ exists in
    $[-1,1]^{d-1}$.
  \end{itemize}
\end{assumption}
As in equation~\eqref{eq:ce_dist_conv},  $K^{\sgn}_{\Vset \sm \{i\} \mid i}$ is a distribution
function on $\mathbb{R}^{d-1}$ satisfying
$\lim_{\level\to\infty}K^{\sgn}_{\Vset\sm \{i\} \mid i}(\bm t_j) = 1$
for all $j\in \Vset\sm \{i\}$.

\subsubsection{\texorpdfstring{$\alpha$}{a}-coefficients}
\label{sec:alphacoefflaplace}

\begin{proposition}[Proposition \ref{prop:joint_convergence}$\pm$]
  \label{prop:joint_convergence_sgn}
  Suppose that for $\bm X=(X_i\,:\,i\in \Vset)$, the
  convergence \eqref{eq:gauge_densconv} and Assumption
  \ref{ass:ce_conditions_sgn}
  holds. Let $\sgnmm = +$ or $\sgnmm = -$.
  Then,
  \begin{itemize}[wide=0\parindent]
  \item[$(i)$] for all $i\in \Vset$,
    $g_{\{i\} \cup \{\Vset \sm \{i\}\}}(\sgnmm \times 1, \bm \alpha^{\sgnmm}_{\Vset
      \sm \{i\} \mid i}) = 1$;

  \item[$(ii)$] for all $i\in \Vset$, if there are multiple
    vectors $\bm \alpha^{\sgnmm}$ satisfying
    $g_{\{i\} \cup \{\Vset \sm \{i\}\}}(\sgnmm \times 1, \bm \alpha^{\sgnmm}) = 1$,
    then the coordinate-wise maximum such vector $\bm \alpha^{\star\sgnmm}$
    also satisfies
    $g_{\{i\} \cup \{\Vset \sm \{i\}\}}(\sgnmm \times 1, \bm \alpha^{\star\sgnmm})
    = 1$ and 
    $\bm \alpha^{\sgnmm}_{\Vset \sm \{i\} \mid i} = \bm
    \alpha^{\star\sgnmm}$.
  \end{itemize}
\end{proposition}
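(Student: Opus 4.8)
The plan is to reduce Proposition~\ref{prop:joint_convergence_sgn} (Proposition~\ref{prop:joint_convergence}$\pm$) to Proposition~\ref{prop:joint_convergence} by a sign-flip argument on the appropriate coordinates. The key observation is that conditioning on $X_i = -t$ with $t\to\infty$ and studying the behaviour of $\bm X_{-i}$ is, after replacing $X_i$ by $-X_i$ (and possibly flipping other coordinates), exactly the setup of Assumption~\ref{ass:ce_conditions}. First I would make this precise: define, for a fixed sign choice $\sgn\in\{+,-\}$, the transformed vector $\tilde{\bm X}$ with $\tilde X_i = \sgn\times X_i$ and $\tilde X_j = X_j$ for $j\neq i$ (the Laplace distribution being symmetric, $\tilde X_i$ still has a standard Laplace margin, and the joint density convergence \eqref{eq:gauge_densconv} holds for $\tilde{\bm X}$ with gauge $\tilde g(\bm x) = g_{\{i\}\cup\{\Vset\sm\{i\}\}}(\sgn\times x_i, \bm x_{-i})$, which is still continuous, $1$-homogeneous, and bounded below by $\max_i|x_i|$). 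Under this transformation, conditioning on $X_i = \sgn\times t$ becomes conditioning on $\tilde X_i = t$, and Assumption~\ref{ass:ce_conditions_sgn} for $\bm X$ with sign $\sgn$ is literally Assumption~\ref{ass:ce_conditions} for $\tilde{\bm X}$, with $\bm a_{\Vset\sm\{i\}\mid i}(t) = \bm a^{\sgn}_{\Vset\sm\{i\}\mid i}(t)$, $\bm b_{\Vset\sm\{i\}\mid i}(t) = \bm b^{\sgn}_{\Vset\sm\{i\}\mid i}(t)$, limit kernel $K_{\Vset\sm\{i\}\mid i} = K^{\sgn}_{\Vset\sm\{i\}\mid i}$, and hence $\bm\alpha_{\Vset\sm\{i\}\mid i} = \bm\alpha^{\sgn}_{\Vset\sm\{i\}\mid i}$.

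Next I would apply Proposition~\ref{prop:joint_convergence} to $\tilde{\bm X}$: part $(i)$ gives $\tilde g(1, \bm\alpha^{\sgn}_{\Vset\sm\{i\}\mid i}) = 1$, and unwinding the definition of $\tilde g$ this reads $g_{\{i\}\cup\{\Vset\sm\{i\}\}}(\sgn\times 1, \bm\alpha^{\sgn}_{\Vset\sm\{i\}\mid i}) = 1$, which is exactly claim $(i)$ of Proposition~\ref{prop:joint_convergence_sgn}. Similarly, part $(ii)$ applied to $\tilde{\bm X}$ states that among all $\bm\alpha$ with $\tilde g(1,\bm\alpha)=1$ the coordinate-wise maximal one, call it $\bm\alpha^\star$, still satisfies $\tilde g(1,\bm\alpha^\star)=1$ and equals $\bm\alpha_{\Vset\sm\{i\}\mid i} = \bm\alpha^{\sgn}_{\Vset\sm\{i\}\mid i}$. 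Re-expressing via $\tilde g$ gives precisely claim $(ii)$: the condition $\tilde g(1,\bm\alpha)=1$ is the condition $g_{\{i\}\cup\{\Vset\sm\{i\}\}}(\sgn\times 1, \bm\alpha^{\sgn}) = 1$, and $\bm\alpha^\star$ becomes the $\bm\alpha^{\star\sgn}$ in the statement. One subtle point to check carefully is that the coordinate-wise maximum is taken over the \emph{same} coordinate ordering in both formulations, i.e.\ that flipping the conditioning coordinate $X_i$ does not touch the coordinates $j\in\Vset\sm\{i\}$ over which the maximum is formed; since $\tilde X_j = X_j$ for all $j\neq i$, this is immediate and the partial order on $\RR^{d-1}$ is unchanged.

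The case $\sgn=+$ is of course nothing new — it is Proposition~\ref{prop:joint_convergence} verbatim once one notes $\bm\alpha^+_{\Vset\sm\{i\}\mid i}$ is just $\bm\alpha_{\Vset\sm\{i\}\mid i}$ in the new notation — so the only content is the $\sgn=-$ case, which the transformation $X_i\mapsto -X_i$ handles uniformly. I do not anticipate a genuine obstacle here; the main thing to be careful about is bookkeeping, namely verifying that every hypothesis of Proposition~\ref{prop:joint_convergence} (the density convergence \eqref{eq:gauge_densconv} and all parts of Assumption~\ref{ass:ce_conditions}, including the support condition $(i)$ and the existence of the limit $(ii)$) transfers cleanly to $\tilde{\bm X}$. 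The support condition is unaffected because $K^{\sgn}_{j\mid i}$ is unchanged by the relabelling, and the limit $\bm\alpha^{\sgn}_{\Vset\sm\{i\}\mid i}\in[-1,1]^{d-1}$ matches the $[0,1]^{d-1}$ hypothesis only after we note that Proposition~\ref{prop:joint_convergence} as stated assumed exponential-type nonnegativity; in the Laplace setting one either invokes the already-understood Laplace extension of Proposition~\ref{prop:joint_convergence} (the $\sgn=+$ statement of Proposition~\ref{prop:joint_convergence_sgn} itself, whose proof in Appendix~\ref{sec:proofs} is for Laplace margins) or re-runs that proof with the sign flip inserted. Thus the cleanest write-up applies the sign flip directly inside the Appendix proof of Proposition~\ref{prop:joint_convergence}, and we simply remark that replacing $X_i$ by $-X_i$ and $\alpha^+$ by $\alpha^-$ throughout yields the $\sgn=-$ statement without further modification.
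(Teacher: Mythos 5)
Your proposal is correct and, after exploring the sign-flip reduction, arrives at essentially the approach the paper uses: rerun the proof of Proposition~\ref{prop:joint_convergence} with the conditioning level $t$ replaced by $\sgn\times t$ and all quantities replaced by their signed analogues (the paper also picks up a benign factor of $2$ from the Laplace marginal density $\tfrac12 e^{-|x|}$, which is negligible on the log scale and which your sketch omits). You are right to flag that the literal reduction via $\tilde{\bm X}$ is obstructed by the $[0,1]^{d-1}$ hypothesis on $\bm\alpha$ in Assumption~\ref{ass:ce_conditions}, and your fallback of re-running the proof with the sign inserted is exactly what the authors do.
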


\begin{remark}[Remark~\ref{rmk:alphamin}$\pm$]
\label{rmk:alphaminpm}
Since $g(\bm x) \geq \max(|x_i|: i \in \Vset)$,
Proposition~\ref{prop:joint_convergence_sgn} implies that
$\bm \alpha^{\sgn}_{\Vset \sm \{i\}}$ is a global minimizer,
that is,
$g_{\{i\}\cup \{\Vset\sm \{i\}\}}(\sgn \times 1, \bm
\alpha^{\sgn}_{\Vset\sm\{i\}\mid i}) \leq
g_{\{i\}\cup \{\Vset\sm \{i\}\}}(\sgn \times 1, \bm x)$
for all $\bm x\in\RR^{d-1}$.
\end{remark}

The proof of Proposition~\ref{prop:joint_convergence_sgn} is in Appendix~\ref{sec:proofs}. This allows us to introduce Proposition~\ref{prop:block_graph_alphas_lap}, also proven in Appendix~\ref{sec:proofs}, as the Laplace margin analogue of Proposition~\ref{prop:block_graph_alphas}.

\begin{proposition}[Proposition~\ref{prop:block_graph_alphas}$\pm$]
  \label{prop:block_graph_alphas_lap}
  Suppose that $\bm X= (X_i:i \in \Vset)$ follows a block geometric extremal graphical model with graph $\GG=(\Vset, \Eset)$, and that the assumptions in
  Proposition \ref{prop:joint_convergence_sgn} hold.\ For any two
  distinct vertices $i$ and $j$ in $\Vset$, let $\{i=v_0, v_1,\ldots,v_{m_{i j}}=j\}$ be the vertices along the unique shortest path of length $m_{ij} \geq 1$ from $i$ to $j$. Then for $m_{ij}=2$, $\alpha_{j|i}^+ = |\alpha_{v_1|i}^+| \alpha_{j|v_1}^{\sgnmm(\alpha_{v_1|i}^+)}$. For $m_{ij} \geq 3$, 
  \begin{equation}
    \alpha_{j\mid i}^+ = \left|\alpha_{v_1|i}^+\right| \prod_{k=2}^{m_{ij}-1} \left|\alpha_{v_k|v_{k-1}}^{\sgnmm(\alpha^+_{v_{k-1}|i})}\right| \times \alpha_{j|v_{m_{ij}-1}}^{\sgnmm(\alpha_{v_{m_{ij}-1}|i}^+)}.
    \label{eq:product_alphas_lap}
  \end{equation}
\end{proposition}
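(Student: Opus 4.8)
The plan is to mirror the proof of Proposition~\ref{prop:block_graph_alphas}, carrying the extra sign bookkeeping forced by Laplace margins; I would treat $\alpha^+_{j\mid i}$ explicitly, the case $\alpha^-_{j\mid i}$ being identical after flipping the conditioning sign. The first step is to pass from the block graph to a chain: by Lemma~\ref{lem:blockgraphtopath} and Lemma~\ref{lem:graph_marginalization}, marginalizing the block-graph gauge over every vertex off the shortest path $\{i=v_0,v_1,\dots,v_m=j\}$ (with $m=m_{ij}$) reproduces the chain gauge on $v_0,\dots,v_m$ carrying the same bivariate marginal gauges $g_{\{v_{k-1},v_k\}}$ along the edges; since each coefficient $\alpha^{\pm}_{j\mid i}$ and $\alpha^{\pm}_{v_k\mid v_{k-1}}$ is determined only by the relevant bivariate marginal gauge (cf.\ \eqref{eq:gauge_alpha_condition} and Proposition~\ref{prop:joint_convergence_sgn}), it suffices to prove \eqref{eq:product_alphas_lap} for a chain, after checking the assumptions of Proposition~\ref{prop:joint_convergence_sgn} descend to the sub-vectors involved. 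The whole statement then reduces to the one-step recurrence $\alpha^+_{j\mid i}=\lvert\alpha^+_{\pi\mid i}\rvert\,\alpha^{\sgnmm(\alpha^+_{\pi\mid i})}_{j\mid\pi}$ with $\pi:=v_{m-1}$, $m\ge2$, and base case $\alpha^+_{v_1\mid i}=\alpha^+_{v_1\mid v_0}$: unfolding it along the nested prefixes $v_0\!-\!\cdots\!-\!v_k$ (each the shortest path between its endpoints in the chain) and taking absolute values in all but the final factor produces exactly \eqref{eq:product_alphas_lap}.

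To establish the recurrence, I would marginalize the chain down to the three vertices $\{i,\pi,j\}$, which by \eqref{eq:chain_gauge} (in Laplace form) gives $g_{\{i,\pi,j\}}(x_i,x_\pi,x_j)=g_{\{i,\pi\}}(x_i,x_\pi)+g_{\{\pi,j\}}(x_\pi,x_j)-\lvert x_\pi\rvert$, with $g_{\{i,j\}}(x_i,x_j)=\min_{x_\pi}g_{\{i,\pi,j\}}(x_i,x_\pi,x_j)$ from \eqref{eq:min_gauge}. Evaluating at $(x_i,x_j)=(1,\alpha^+_{j\mid i})$: the trivariate version of Remark~\ref{rmk:alphaminpm} identifies $x_\pi=\alpha^+_{\pi\mid i}$ as an inner minimiser, and $g_{\{i,\pi\}}(1,\alpha^+_{\pi\mid i})=1$ by the bivariate characterization, so from $g_{\{i,j\}}(1,\alpha^+_{j\mid i})=1$ one gets $g_{\{\pi,j\}}(\alpha^+_{\pi\mid i},\alpha^+_{j\mid i})=\lvert\alpha^+_{\pi\mid i}\rvert$. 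If $\alpha^+_{\pi\mid i}=0$ this forces $\lvert\alpha^+_{j\mid i}\rvert\le g_{\{\pi,j\}}(0,\alpha^+_{j\mid i})=0$, hence $\alpha^+_{j\mid i}=0$ and the recurrence holds vacuously. If $\alpha^+_{\pi\mid i}\ne0$, dividing by the strictly positive $\lvert\alpha^+_{\pi\mid i}\rvert$ (positive homogeneity) gives $g_{\{\pi,j\}}\bigl(\sgnmm(\alpha^+_{\pi\mid i}),\alpha^+_{j\mid i}/\lvert\alpha^+_{\pi\mid i}\rvert\bigr)=1$, so by maximality of $\alpha^{\sgnmm(\alpha^+_{\pi\mid i})}_{j\mid\pi}$ among the roots of $y\mapsto g_{\{\pi,j\}}(\sgnmm(\alpha^+_{\pi\mid i})\times1,y)=1$ (Proposition~\ref{prop:joint_convergence_sgn}(ii)) we obtain $\alpha^+_{j\mid i}\le\lvert\alpha^+_{\pi\mid i}\rvert\,\alpha^{\sgnmm(\alpha^+_{\pi\mid i})}_{j\mid\pi}$. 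For the reverse inequality I would substitute the candidate point $\bigl(1,\alpha^+_{\pi\mid i},\lvert\alpha^+_{\pi\mid i}\rvert\,\alpha^{\sgnmm(\alpha^+_{\pi\mid i})}_{j\mid\pi}\bigr)$ into $g_{\{i,\pi,j\}}$: by homogeneity and $g_{\{\pi,j\}}(\sgnmm(\alpha^+_{\pi\mid i})\times1,\alpha^{\sgnmm(\alpha^+_{\pi\mid i})}_{j\mid\pi})=1$ the three terms are $1$, $\lvert\alpha^+_{\pi\mid i}\rvert$ and $-\lvert\alpha^+_{\pi\mid i}\rvert$, summing to $1$; hence $g_{\{i,j\}}(1,\lvert\alpha^+_{\pi\mid i}\rvert\,\alpha^{\sgnmm(\alpha^+_{\pi\mid i})}_{j\mid\pi})\le1$, and the reverse bound holds because $g_{\{i,j\}}\ge\max(\lvert x_i\rvert,\lvert x_j\rvert)$, so it equals $1$; maximality of $\alpha^+_{j\mid i}$ then yields the matching lower bound and the recurrence follows.

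The step I expect to be the main obstacle is the sign bookkeeping in the unfolding, rather than any individual inequality. One must verify that the sign attached to the step $v_{k-1}\to v_k$ is precisely $\sgnmm(\alpha^+_{v_{k-1}\mid i})$ as written in \eqref{eq:product_alphas_lap} --- this is because $X_{v_{k-1}}$ scales like $\alpha^+_{v_{k-1}\mid i}X_i$ as $X_i\to+\infty$, and because applying the recurrence to the prefix path gives $\lvert\alpha^+_{v_k\mid i}\rvert=\lvert\alpha^+_{v_1\mid i}\rvert\prod_{\ell=2}^{k}\lvert\alpha^{\sgnmm(\alpha^+_{v_{\ell-1}\mid i})}_{v_\ell\mid v_{\ell-1}}\rvert$, so the magnitudes telescope correctly. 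One must also stay consistent that $g$ is only positively homogeneous in Laplace margins, so every rescaling divides by $\lvert\alpha^+_{\pi\mid i}\rvert$ and never by the signed coefficient; and track the degenerate propagation when some $\alpha^+_{v_{k-1}\mid i}=0$, where the product in \eqref{eq:product_alphas_lap} must collapse to $0$ irrespective of the (then immaterial) trailing signs.
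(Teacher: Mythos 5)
Your proposal is correct and follows essentially the same route as the paper: identify $\alpha^+_{j\mid i}$ as the maximal root, use the block/chain decomposition to factor the gauge along the shortest path, deduce the one-step recurrence $\alpha^+_{j\mid i}=\lvert\alpha^+_{\pi\mid i}\rvert\,\alpha^{\sgnmm(\alpha^+_{\pi\mid i})}_{j\mid\pi}$ by positive homogeneity (dividing by $\lvert\alpha^+_{D_k\mid i}\rvert$, never by the signed value), and treat the $\alpha^+_{D_k\mid i}=0$ case separately. The only differences are organizational: you explicitly reduce to a chain via Lemma~\ref{lem:blockgraphtopath}--Lemma~\ref{lem:graph_marginalization} (which the paper reserves for the $\beta$-coefficient proof, deriving the same recurrence directly from the clique/separator sum), and you give a two-sided inequality argument for the one-step recurrence where the paper asserts the maximality of $\bm\alpha^{\sgnmm(\alpha^+_{D_k\mid i})}_{C_k\sm D_k\mid D_k}$ more compactly — your version makes that step slightly more watertight.
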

We note that the relevant signs in equation~\eqref{eq:product_alphas_lap} can be deduced through the recursion
\begin{align*}
    \sgn(\alpha_{v_k|i}^+) = \sgn\left(\alpha_{v_k|v_{k-1}}^{\sgn(\alpha_{v_{k-1}|i}^{+})}\right),
\end{align*}
and that equation~\eqref{eq:product_alphas_lap} reduces to equation~\eqref{eq:product_alphas} in the event that all signs are positive. We illustrate Proposition~\ref{prop:block_graph_alphas_lap} with two examples.
\begin{example}\normalfont 
 Let $\Vset = \{1,2,3,4,5,6\}$ and $\Eset = \{(1,2),(2,3),(3,4),(3,5),(3,6),(4,5),(4,6)\}$. This is a block graph with cliques $\mathcal{C} =\{\{1,2\},\{2,3\},\{3,4,5,6\}\}$ and separators $\mathcal{D} = \{2,3\}$. We consider a Gaussian distribution defined by such a conditional independence graph, with Laplace margins. Such a distribution has limit set with gauge function
 \begin{align*}
 g(\bm{x}) = g_{\{1,2\}}(x_1,x_2) + g_{\{2,3\}}(x_2,x_3) + g_{\{3,4,5,6\}}(x_3,x_4,x_5,x_6) - |x_2| - |x_3|,
 \end{align*}
 where
 $g_{A}(\bm{x}_A) =
 \left(\sgn(\bm{x}_A)|\bm{x}_{A}|^{1/2}\right)^\top(\Sigma_{AA})^{-1}
 \left(\sgn(\bm{x}_A)|\bm{x}_{A}|^{1/2}\right)$, and for
 $A \subset \Vset$, $\Sigma_{AA}$ is the Gaussian correlation matrix
 for the subset of components indexed by $A$. We consider
 $\alpha^+_{6|1}$. The shortest path from node $1$ to $6$ is
 $\{v_0=1, v_1=2, v_2=3, v_3 = 6\}$, with path length $m_{16} = 3$. By
 the result of Proposition~\ref{prop:block_graph_alphas_lap}, we
 therefore have
 $\alpha_{6|1}^+ =
 |\alpha_{2|1}^+|\times|\alpha_{3|2}^{\sgn(\alpha_{2|1}^+)}|\times\alpha_{6|3}^{\sgn(\alpha_{3|1}^+)}$. A
 Gaussian distribution with this conditional independence graph can be
 constructed as follows:
\begin{align*}
    Z_1 &\sim N(0,1)\\
    Z_2|Z_1 &\sim \rho_{12} Z_1 + (1-\rho_{12}^2)^{1/2} \epsilon_2, \qquad &\epsilon_2 \sim N(0,1) \bigCI Z_1\\
    (Z_3,Z_4,Z_5,Z_6)|Z_2  &\sim \rho_{23} Z_2 + (1-\rho_{23}^2)^{1/2} (\epsilon_3,\epsilon_4,\epsilon_5,\epsilon_6), \qquad &(\epsilon_3,\epsilon_4,\epsilon_5,\epsilon_6) \sim N_4(\bm{0},\Sigma_{3:6,3:6}) \bigCI Z_2,
\end{align*}
 where $\Sigma_{3:6,3:6}$ is a $4 \times 4$ correlation matrix with off-diagonal entries $\rho_{kl}$, $k<l$, $k,l \in \{3,4,5,6\}$. We apply the probability integral transformation to get from $\bm{Z}$ to $\bm{X}$ with Laplace margins. For Gaussian distributions with Laplace margins, we have $\alpha_{i|j}^+ = \alpha_{j|i}^+ = \sgn(\rho_{ij})\rho_{ij}^2$, and $\alpha_{i|j}^- = \alpha_{j|i}^- = -\sgn(\rho_{ij})\rho_{ij}^2$ In Figure~\ref{fig:GaussLaplaceExample} we illustrate this with $\rho_{12} = -0.9$, $\rho_{23} = 0.8$, $\rho_{36} = 0.7$, giving $\alpha_{2|1}^+ = -0.81$, $\alpha_{3|2}^{-}=-0.64$, $\alpha_{3|2}^+ =0.64$, $\sgn(\alpha_{3|1}^+) = \sgn(\alpha_{3|2}^{\sgn(\alpha_{2|1}^+)}) = \sgn(\alpha_{3|2}^-) = -$, $\alpha_{6|3}^- = -0.49$, so that
 \begin{align*}
 \alpha_{6|1}^+ &= |\alpha_{2|1}^+|\times|\alpha_{3|2}^{\sgn(\alpha_{2|1}^+)}|\times\alpha_{6|3}^{\sgn(\alpha_{3|1}^+)}\\
 & = 0.81 \times 0.64 \times -0.49 = -0.254.
 \end{align*}

\begin{figure}
    \centering
    \includegraphics[width=0.24\textwidth]{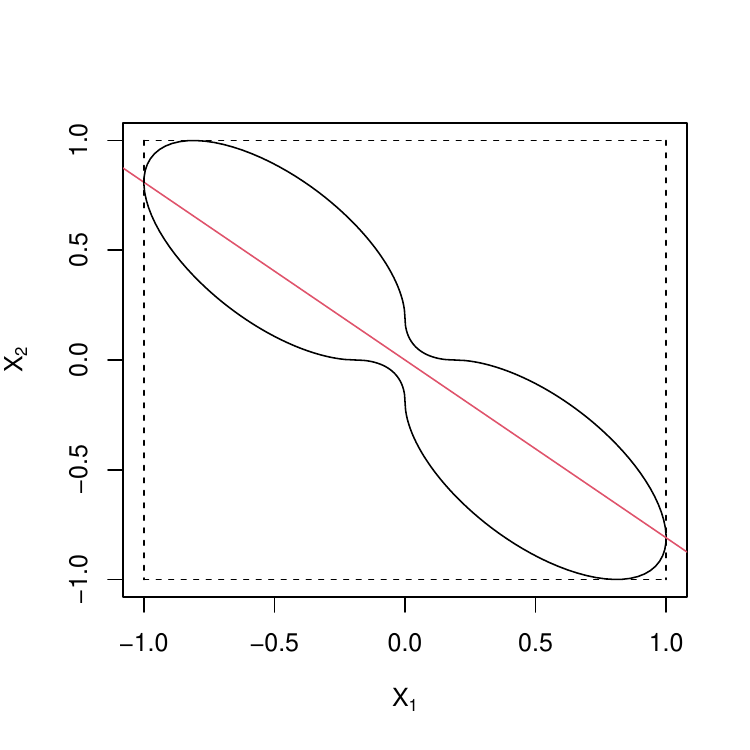}
    \includegraphics[width=0.24\textwidth]{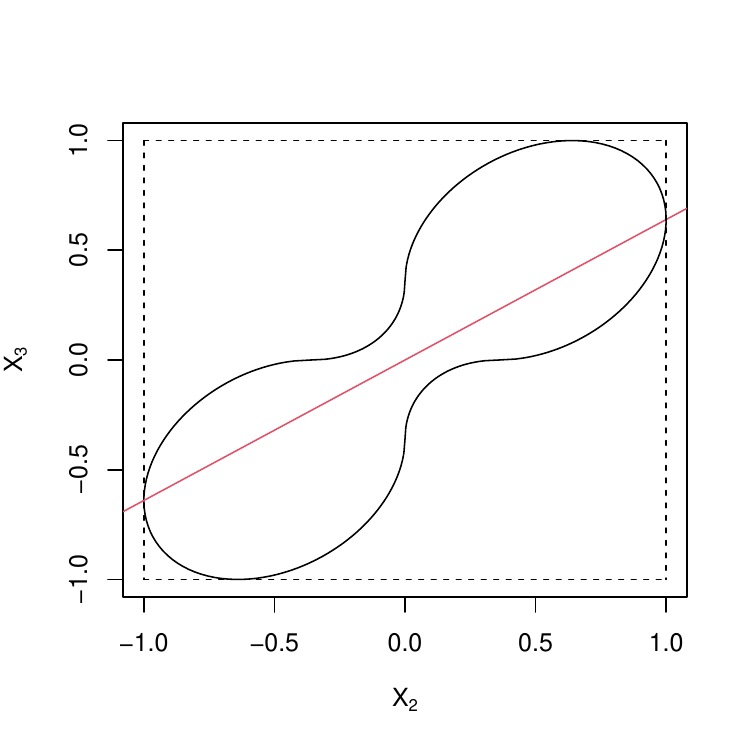}
    \includegraphics[width=0.24\textwidth]{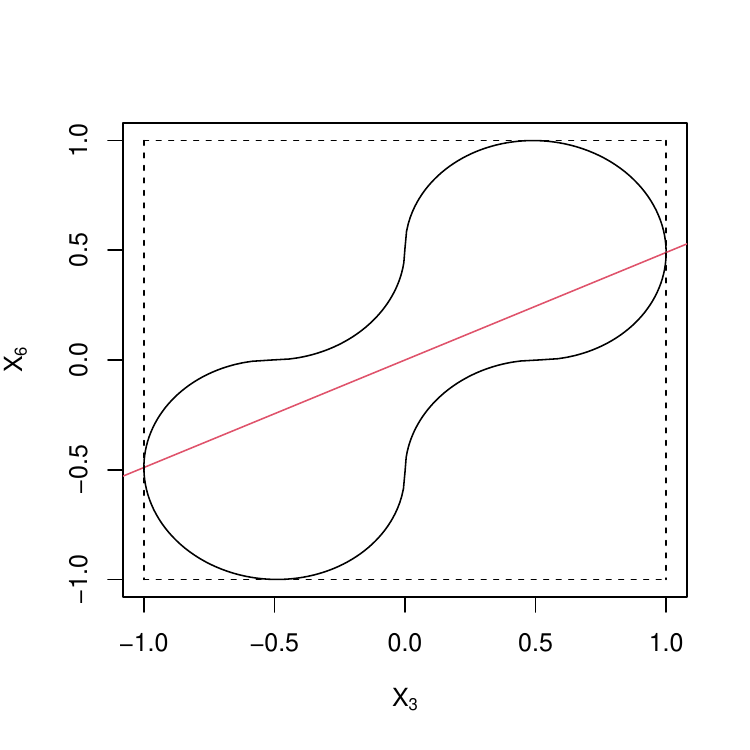}
    \includegraphics[width=0.24\textwidth]{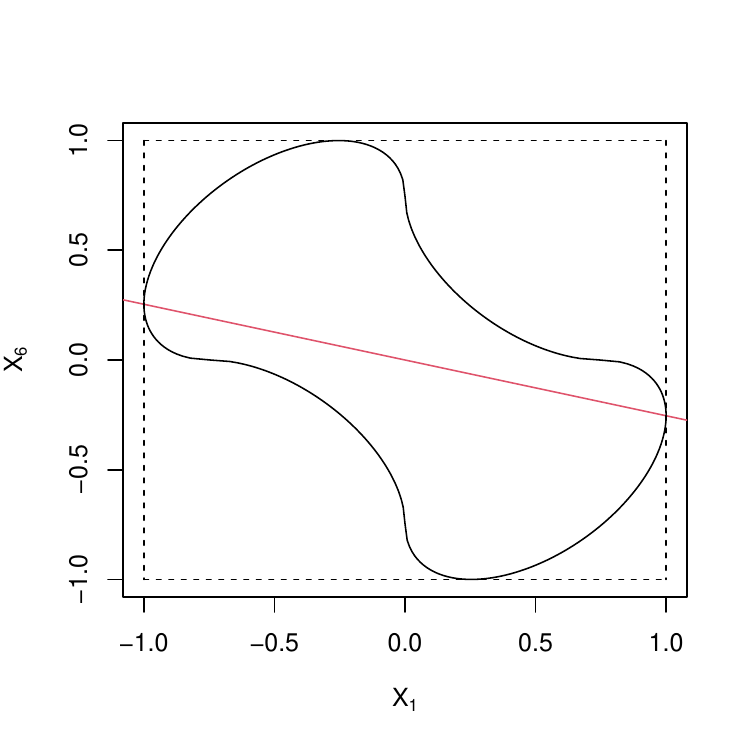}
    \caption{Left to right: marginal 2-dimensional limit sets for the pairs $(X_1,X_2)$, $(X_2,X_3)$, $(X_3,X_6)$, $(X_1,X_6)$. For a pair $(k,l)$, the red lines go through the origin and have slope $\alpha_{k|l}^+$.}
    \label{fig:GaussLaplaceExample}
\end{figure}
In this case of a Gaussian graphical model for $\bm{Z}$, we note this result could also be derived directly. Suppose we marginalize over $Z_2$ to give a block graph with cliques $\mathcal{C} =\{\{1,3\},\{3,4,5,6\}\}$ and separator $\mathcal{D} = \{3\}$. Then $Z_6 \bigCI Z_1 | Z_3$, so the partial correlation $\rho_{16|3}$ is zero, implying $\rho_{16}=\rho_{13}\rho_{36}$. Now if we instead restore $Z_2$ and consider the zero partial correlation between $(Z_1,Z_3)|Z_2$ we similarly get $\rho_{13}=\rho_{12}\rho_{23}$. Putting these together, gives $\rho_{16} = \rho_{36}\rho_{23}\rho_{12} = -0.504$, while $\alpha_{6|1}^+ = \sgn(\rho_{16})\rho_{16}^2=-0.254$. 
\end{example}

\begin{example}\normalfont
Consider a simple chain graphical model with $\Vset=\{1,2,3\}$, $\mathcal{C}=\Eset =\{(1,2),(2,3)\}$, and $\mathcal{D} =\{2\}$. The gauge function is
\begin{align*}
    g(x_1,x_2,x_3) = g_{\{1,2\}}(x_1,x_2) + g_{\{2,3\}}(x_2,x_3) - |x_2|.
\end{align*}
We take $g_{\{1,2\}}$ as a Gaussian gauge with Laplace margins and parameter $\rho_{12}=-0.9$, yielding $\alpha_{2|1}^+ = -0.9^2$. For $g_{\{1,3\}}$, we take a gauge function with more than one solution to $g_{\{2,3\}}(-1,\alpha) = 1$; specifically $g_{\{2,3\}}(-1,-1) = g_{\{2,3\}}(-1,-0.5)=1$, see the second panel of Figure~\ref{fig:LaplaceTree}. By our definition that involves the maximum such solution, we have $\alpha_{3|2}^{-} = -0.5$. The two specified marginal gauge functions, along with the implied marginal $g_{\{1,3\}}$, and joint function $g$, are displayed in Figure~\ref{fig:LaplaceTree}. Combining these and using Proposition~\ref{prop:block_graph_alphas_lap}, we have
\begin{align*}
    \alpha_{3|1}^+ &= |\alpha_{2|1}^{+}| \times \alpha_{3|2}^{\sgn(\alpha_{2|1}^+)} = 0.81 \times -0.5 = -0.405.
\end{align*}
    
\begin{figure}
    \centering
    \includegraphics[width=0.24\textwidth]{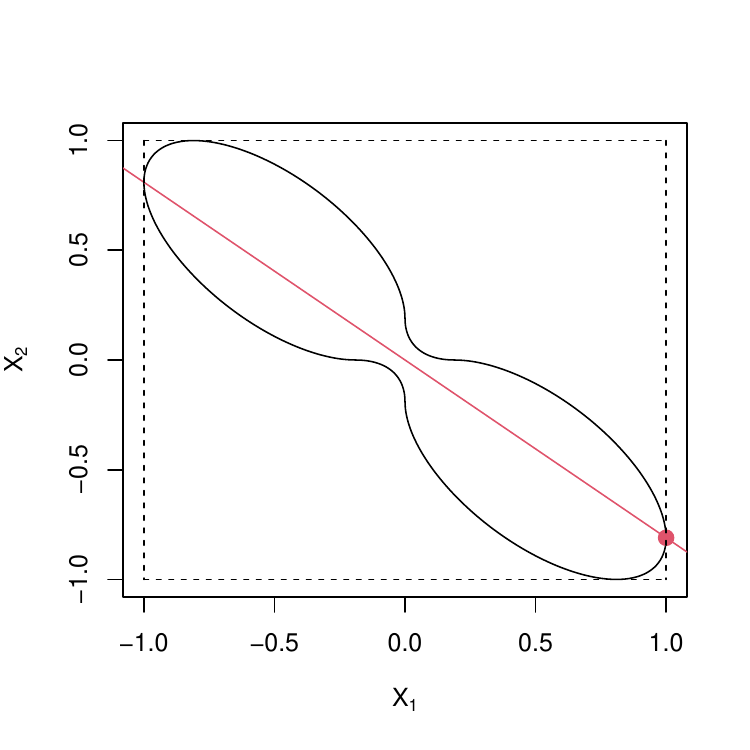}
    \includegraphics[width=0.24\textwidth]{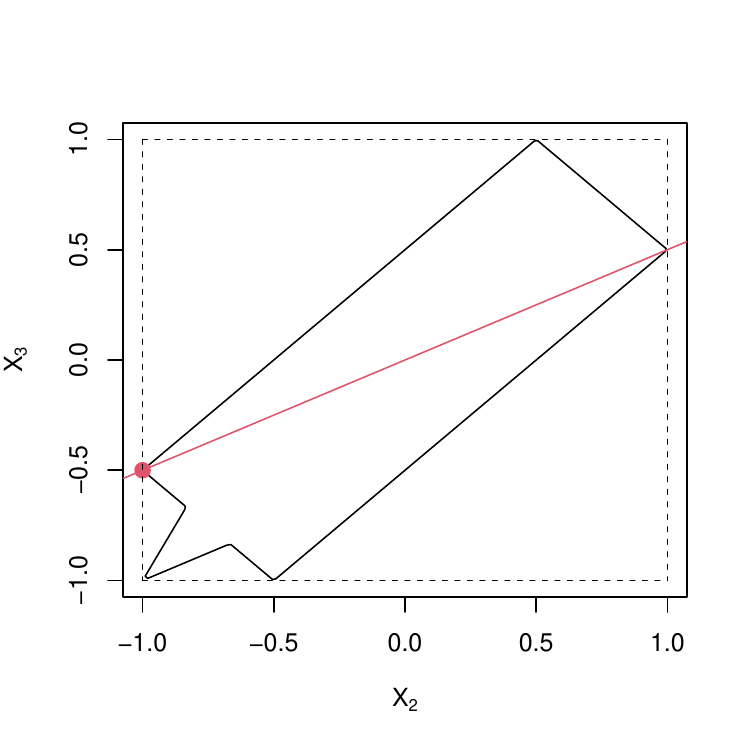}
    \includegraphics[width=0.24\textwidth]{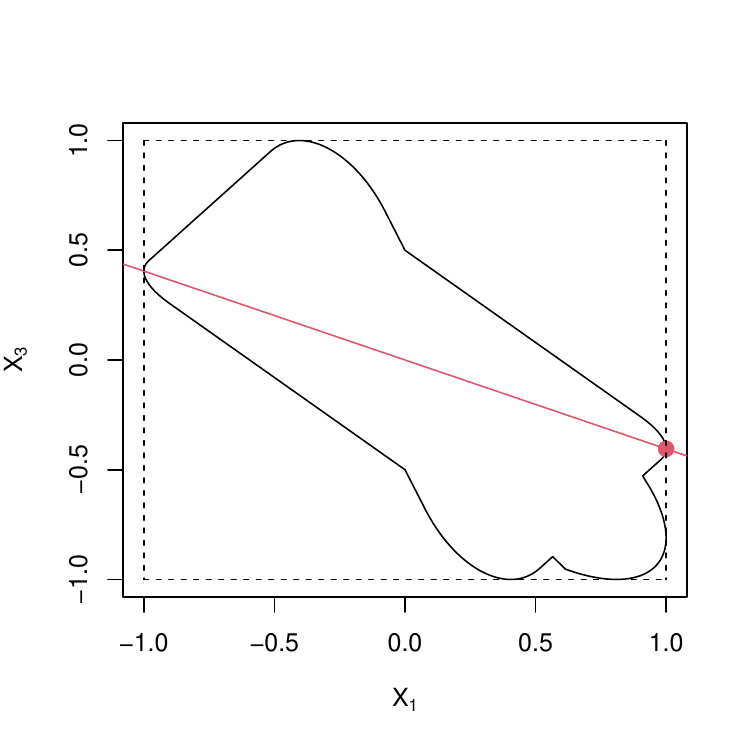}
    \includegraphics[width=0.24\textwidth]{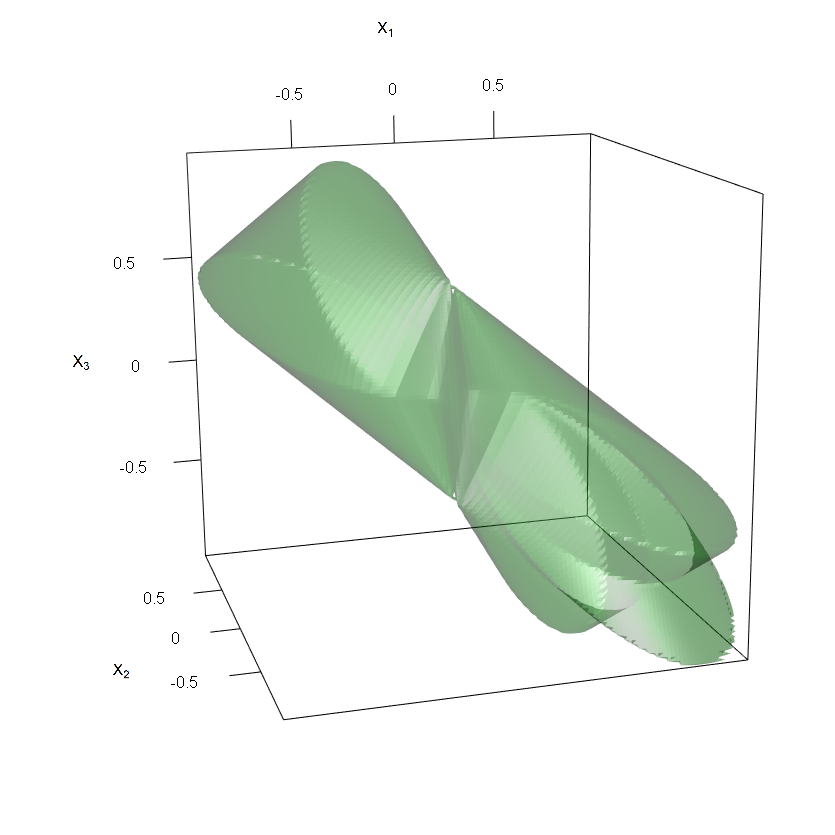}
    \caption{Left to right: marginal 2-dimensional limit sets for the pairs $(X_1,X_2)$, $(X_2,X_3)$, $(X_1,X_3)$; 3-d limit set for $(X_1,X_2,X_3)$. In the first three figures, the highlighted points are $(1,\alpha_{2|1}^+)$, $(-1,\alpha_{3|2}^{-})$, and $(1,\alpha_{3|1}^+)$, respectively, with lines that intersect these points and the origin.}
    \label{fig:LaplaceTree}
\end{figure}
\end{example}

\subsubsection{\texorpdfstring{$\beta$}{b}-coefficients}
\label{sec:betacoefflaplace}

We do not explicitly consider $\beta$-coefficients in the Laplace margin case, but we anticipate that the results follow from blending the arguments of Sections~\ref{sec:beta_coefficients} and~\ref{sec:alphacoefflaplace}. Namely, we need to consider both upper and lower tails, as well as $\alpha$-values in the respective tails.

\section{Results related to joint extremes}
\label{sec:jointex}

When we have a geometric extremal graphical model for which $\alpha_{j|i}=1$ for all $i \in \Vset$ and $j \in \Vset\sm\{i\}$, Propositions~\ref{prop:block_graph_alphas} and~\ref{prop:block_graph_alphas_lap} do not offer particularly interesting results. Such cases are strongly linked to full asymptotic dependence, where the extremal graphical models of \citet{EngelkeHitz20} are relevant. Nonetheless, it is possible to construct geometric extremal graphical models possessing such asymptotic dependence, and consider results pertaining to them.

We firstly consider the notion of joint extremes in the geometric setting, and outline how joint extremes within cliques link to joint extremes across multiple cliques. We then present connections between our definitions of joint extremes and the $\alpha$-coefficients of Section~\ref{sec:Dependencecoeffs}. We finish the section with an important special case of a tree geometric extremal graphical model whose components comprise a common example of a bivariate gauge function for asymptotically dependent variables.

\subsection{Joint extremes in the geometric framework}

If we have full asymptotic dependence, as defined in~equation~\eqref{eq:chipos} of Section~\ref{sec:intro}, and convergence on to a limit set $G=\{\bm{x} \in\mathcal{S}^d:g(\bm{x}) \leq 1\}$, then $g(\bm{1}) = 1$. The converse does not hold true in general. However, when working in the limit set framework, it is simplest to consider joint extremes in $d$ variables as being defined by the property $g(\bm{1}) = 1$, rather than equation~\eqref{eq:chipos}. Based on this observation, and following \citet{CampbellWadsworth24}, we consider a simple criterion in terms of the gauge function for defining groups of joint extremes. 

\begin{definition}\label{def:jtex}
    Take $A \subseteq \Vset$. We say that the variables indexed by $A$ are jointly extreme, while those indexed by $\Vset \setminus A$ are smaller order, if $g(\bm{z}^A) = 1$, where $z_j^A = 1$ for $j \in A$, and $z_{j}^A = \gamma_j$, $j \not\in A$ for $\gamma_j \in [0,1)$ or $\gamma_j \in (-1,1)$ for exponential and Laplace margins, respectively. The full collection of such sets $A$ is denoted $\mathcal{A}$ and termed the \emph{geometric extreme directions}. 
\end{definition}

\begin{remark}
The name \emph{geometric extreme directions} follows the terminology of \emph{extreme directions} from \citet{Mourahibetal24}. The latter is phrased in terms of the support of the so-called spectral measure that arises in classical extreme value theory; see also \citet{Goixetal17}. For a given joint distribution, geometric extreme directions will often coincide with extreme directions, but this need not always be the case.
\end{remark}

The vectors $\{\bm{z}^A: A \in \mathcal{A}\}$ represent points of intersection
between the boundary of the limit set $G$ as described by the unit
level set of $g$, and its bounding box $[0,1]^d$ or $[-1,1]^d$ in
exponential or Laplace margins, respectively. As the coordinatewise
supremum of $G$ is $(1,\ldots,1)$, each index $j \in \{1,\ldots,d\}$ is
represented in at least one $A \in \mathcal{A}$. Note that for a given
$A$, there may be more than one vector $\bm{z}^A$ such that
$g(\bm{z}^A)=1$, and there may be uncountably many such vectors if the
boundary of $G$ coincides with the bounding box on a region of
non-null measure. Furthermore, while the set $\mathcal{A}$ of geometric extreme
directions summarizes the extremal dependence structure, the vectors
$\{\bm{z}^A:A \in \mathcal{A}\}$ provide further detail as the actual directions
experiencing most extremes. Finally, we also note that we exclude
$\gamma_j=-1$ from Definition~\ref{def:jtex} in the Laplace margin
case. The reason for this is that we can replace $-1$ values with $+1$
through negating that variable and switching its lower and upper
tail. This is therefore a version of ``joint extremes'' and excluded
for simplicity.

\begin{example}\normalfont
\label{ex:LogGauss123}
    We illustrate Definition~\ref{def:jtex} with a simple example. Let $\Vset=\{1,2,3\}$ and $\Eset=\{(1,2),(2,3)\}$ represent a chain geometric extremal graphical model, with exponential margins, $g_{\{1,2\}}(x_1,x_2)= (x_1+x_2)/\theta + (1-2/\theta)\min(x_1,x_2)$, $\theta \in (0,1]$ and $g_{\{2,3\}}(x_2,x_3)=(x_2+x_3-2\rho(x_2 x_3)^{1/2})/(1-\rho^2)$, $\rho\in [0,1)$. The limit set defined by $g(x_1,x_2,x_3)=g_{\{1,2\}}(x_1,x_2)+g_{\{2,3\}}(x_2,x_3)-x_2$ is illustrated in Figure~\ref{fig:LogGauss123}. In this example $\mathcal{A}=\{\{1,2\},\{3\}\}$, with $\bm{z}^{\{1,2\}} = (1,1,\rho^2)$ and $\bm{z}^{\{3\}} =(\rho^2,\rho^2,1)$. The interpretation is that joint extremes occur simultaneously in variables $(X_1,X_2)$, and extremes in variable $X_3$ occur only while $(X_1,X_2)$ are of smaller order.
\end{example}
\begin{figure}
    \centering
 \includegraphics[width=0.5\linewidth]{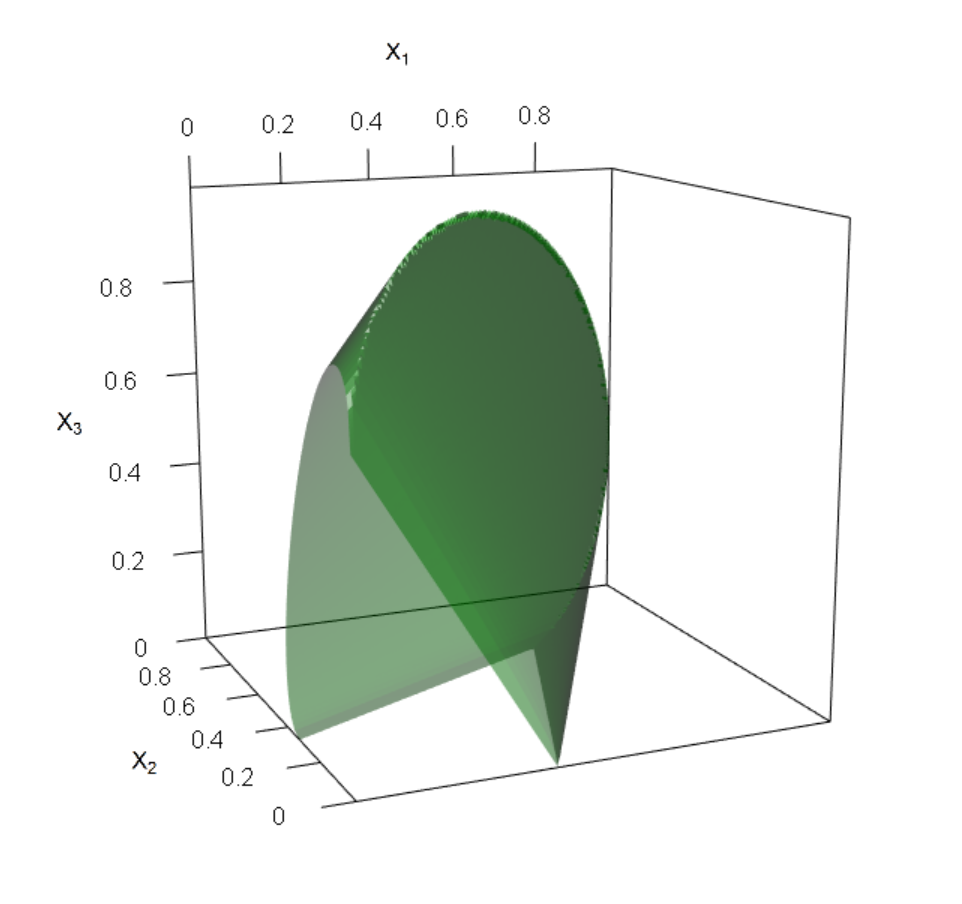}
    \caption{Illustration of the unit level set $g(x_1,x_2,x_3)=1$ for Example~\ref{ex:LogGauss123}, with $\theta=0.4,\rho=0.6$.}
    \label{fig:LogGauss123}
\end{figure}

The following simple proposition highlights that if we have a block geometric extremal graphical model for which all cliques exhibit joint extremes, in the sense $g_{C}(\bm{1}_C) = 1$, then this is equivalent to all variables in $\Vset$ exhibiting joint extremes, in the sense $g(\bm{1}) = 1$. Equivalently, $\Vset \in \mathcal{A}$.

\begin{proposition} \label{prop:blockjtex}
Consider a block geometric extremal graphical model, defined through the gauge function in~\eqref{eq:bgg}. If $g_C(\bm{1}_C) = 1$ for all $C \in \mathcal{C}$, then $g(\bm{1}) = 1$.  Conversely, $g(\bm{1}) = 1$ implies that $g_C(\bm{1}_C) = 1$ for all $C \in \mathcal{C}$.
\end{proposition}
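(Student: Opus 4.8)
The plan is to exploit the block-graph factorization of the gauge function together with the two structural constraints available for marginal gauge functions on exponential margins: the lower bound $g_C(\bm x_C) \ge \max_{i \in C}|x_i|$ (here just $\max_{i \in C} x_i$), and the fact that each singleton marginal satisfies $g_{\{j\}}(x_j) = |x_j| = x_j$. Evaluating \eqref{eq:bgg} at $\bm x = \bm 1$ gives
\begin{equation}
  g(\bm 1) = \sum_{C \in \mathcal{C}} g_C(\bm 1_C) - \sum_{D \in \mathcal{D}} 1 = \sum_{C \in \mathcal{C}} g_C(\bm 1_C) - |\mathcal{D}|,
\end{equation}
using that $|x_D| = 1$ at $\bm x = \bm 1$. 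For a block graph, a standard counting identity for decomposable graphs gives $|\mathcal{C}| - |\mathcal{D}| = 1$ (equivalently, the clique tree has $|\mathcal{C}|$ nodes and $|\mathcal{C}| - 1$ edges, each edge carrying one separator). Hence $g(\bm 1) = \sum_{C} \bigl(g_C(\bm 1_C) - 1\bigr) + 1$.

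For the forward direction, if $g_C(\bm 1_C) = 1$ for every clique $C$, then each summand $g_C(\bm 1_C) - 1$ vanishes and $g(\bm 1) = 1$ immediately. For the converse, I would first observe that each term satisfies $g_C(\bm 1_C) - 1 \ge 0$, since $g_C(\bm 1_C) \ge \max_{i \in C} 1 = 1$. Therefore $g(\bm 1) = 1$ forces every nonnegative summand to be zero, i.e.\ $g_C(\bm 1_C) = 1$ for all $C \in \mathcal{C}$. This is the whole argument; there is no real obstacle once the clique/separator counting identity is in hand.

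The one point requiring a little care is the counting identity $|\mathcal{C}| - |\mathcal{D}| = 1$ and, relatedly, the bookkeeping when a separator appears with multiplicity in $\mathcal{D}$ (as in Example~\ref{ex:3graphs}, where $\mathcal{D}$ is a multiset). The cleanest route is to invoke the running-intersection/clique-tree structure of a decomposable graph: the cliques form the nodes of a junction tree, its $|\mathcal{C}| - 1$ edges are labelled by separators (counted with multiplicity, so $|\mathcal{D}| = |\mathcal{C}| - 1$), and for a block graph these separators are all singletons, which is exactly why $|x_D|$ collapses to $g_{\{D\}}(x_D) = 1$ at $\bm x = \bm 1$. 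With this in place the proof is a two-line arithmetic identity plus the marginal lower bound, and I expect the main ``obstacle'' is merely stating the counting identity cleanly rather than anything substantive.
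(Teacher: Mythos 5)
Your argument is correct and follows essentially the same route as the paper's proof: both use the counting identity $|\mathcal{D}| = |\mathcal{C}| - 1$ together with the lower bound $g_C(\bm{x}_C) \ge \max_{i \in C}|x_i|$ to show that $\sum_C g_C(\bm{1}_C) = |\mathcal{C}|$ forces each clique term to equal one. Your rewriting as $g(\bm 1) = \sum_C (g_C(\bm 1_C) - 1) + 1$ is a minor cosmetic rearrangement of the same computation, and your remark on the multiset nature of $\mathcal{D}$ and the junction-tree origin of the identity is a correct clarification left implicit in the paper.
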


\begin{proof}
For the first direction we have
    \begin{align*}
    g(\bm{1}) = \sum_{C \in \mathcal{C}} g_C(\bm{1}_C) - \sum_{D \in \mathcal{D}} 1
     = |\mathcal{C}|\times 1 - |\mathcal{D}|\times 1
    =1,
\end{align*}
since $|\mathcal{D}| = |\mathcal{C}|-1$. In the other direction,
\begin{align*}
    1=g(\bm{1}) = \sum_{C \in \mathcal{C}} g_C(\bm{1}_C) - \sum_{D \in \mathcal{D}} 1= \sum_{C \in \mathcal{C}} g_C(\bm{1}_C) -(|\mathcal{C}|-1)\times 1,
\end{align*}
implying $\sum_{C \in \mathcal{C}} g_C(\bm{1}_C) = |\mathcal{C}|$. But since $g_C(\bm{x}_C) \geq \max_{j \in C}\{|x_j|\}$, this implies $g_C(\bm{1}_C) = 1$ for all $C \in \mathcal{C}$.
\end{proof}

Our next result in this section pertains to co-extreme behaviour across cliques. In particular, based on Definition~\ref{def:jtex}, we show that groups of variables that cross cliques can only be jointly extreme if the separator variables are included. Proposition~\ref{prop:jtex2clique} outlines this for two cliques and is included for its short and insightful proof. Proposition~\ref{prop:jtexbg} gives the general case.

\begin{proposition} \label{prop:jtex2clique}
    Consider a block graph with two cliques, $C_1,C_2$ and one separator set, $D$, so that $g(\bm{x}) = g_{C_1}(\bm{x}_{C_1})+g_{C_2}(\bm{x}_{C_2}) - |x_{D}|$. Let $B_1 \subseteq C_1 \setminus D$, $B_2 \subseteq C_2 \setminus D$, and define $B= B_1 \cup B_2$ with $\bm{z}^B$ as in Definition~\ref{def:jtex}. Then $g(\bm{z}^B)>1$, i.e., $B$ is not an extreme geometric direction.
\end{proposition}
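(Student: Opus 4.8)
The plan is to evaluate $g(\bm{z}^B)$ directly using the two-clique factorization and the lower bound $g_C(\bm{x}_C) \geq \max_{j \in C}|x_j|$ on the marginal gauge functions. The key observation is that the separator variable $D$ is \emph{not} in $B = B_1 \cup B_2$, so $z_D^B = \gamma_D$ with $|\gamma_D| < 1$, whereas at least one coordinate in each clique is set to $1$ (since $B_1, B_2$ are nonempty in the nontrivial case; if one of them is empty the argument only gets easier). I would write
\[
g(\bm{z}^B) = g_{C_1}(\bm{z}^B_{C_1}) + g_{C_2}(\bm{z}^B_{C_2}) - |\gamma_D|,
\]
and bound each clique term from below. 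For $C_1$, since $B_1 \subseteq C_1 \setminus D$ is nonempty, there is an index $j \in B_1$ with $z_j^B = 1$, so $g_{C_1}(\bm{z}^B_{C_1}) \geq \max_{k \in C_1}|z_k^B| \geq 1$; likewise $g_{C_2}(\bm{z}^B_{C_2}) \geq 1$. Hence $g(\bm{z}^B) \geq 1 + 1 - |\gamma_D| = 2 - |\gamma_D| > 1$, since $|\gamma_D| < 1$. That already gives the strict inequality, so $B \notin \mathcal{A}$.

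I should handle the degenerate cases for completeness. If both $B_1$ and $B_2$ are empty then $B = \emptyset$ and there is nothing to prove (or the statement is vacuous); if exactly one is empty, say $B_2 = \emptyset$ so $B = B_1 \subseteq C_1 \setminus D$, then $g_{C_1}(\bm{z}^B_{C_1}) \geq 1$ as above and $g_{C_2}(\bm{z}^B_{C_2}) \geq |\gamma_D|$ again by the max lower bound applied to coordinate $D \in C_2$, giving $g(\bm{z}^B) \geq 1 + |\gamma_D| - |\gamma_D| = 1$; to get strictness here one notes that $C_2 \setminus D$ is nonempty (a separator is a proper subset of each clique it separates, by definition of a block graph / clique as a \emph{maximal} complete subgraph), and the corresponding coordinate of $\bm{z}^B_{C_2}$ equals some $\gamma_k$ with $|\gamma_k|<1$ but the coordinate $D$ equals $\gamma_D$, so actually I would instead just use that $g_{C_2}$ evaluated at a point with $D$-coordinate $\gamma_D$ and all other coordinates in $(-1,1)$ (resp. $[0,1)$) must strictly exceed... — here care is needed, because $g_{C_2}$ at such a point need not exceed $|\gamma_D|$ strictly. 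The cleanest fix is to observe that $\bm 1 \in \mathcal S^{|C_2|}$ and $g_{C_2}(\bm 1_{C_2}) \geq 1$, but we are not evaluating there. So for the one-empty case I would argue: if $g(\bm z^B)=1$ were to hold, then since $g_{C_1}(\bm z^B_{C_1}) \geq 1$ we would need $g_{C_2}(\bm z^B_{C_2}) \leq |\gamma_D|$, forcing equality $g_{C_2}(\bm z^B_{C_2}) = |\gamma_D| = \max_{k\in C_2}|z^B_k|$; but $D \in C_2$ gives $|z^B_D| = |\gamma_D|$ attaining the max, which is consistent, so this case genuinely requires the extra input that the clique $C_2$ cannot have its gauge equal to the single coordinate $|x_D|$ unless $C_2$ is a singleton — contradicting that $C_2$ is a clique strictly containing the separator $D$. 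I expect this one-empty edge case to be the only subtle point; the main two-nonempty case is immediate.

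The main obstacle, then, is not the central inequality (which is a one-line application of the subadditive-type lower bound) but correctly framing the degenerate sub-cases so the strictness is genuinely justified, in particular ensuring the block-graph structural fact that $|D| < |C_\ell|$ is invoked where needed. I would structure the written proof as: (1) state the factorization and the marginal lower bound; (2) treat $B_1,B_2$ both nonempty to get $g(\bm z^B) \geq 2 - |\gamma_D| > 1$; (3) dispatch the degenerate cases using that separators are proper subsets of cliques, so each clique retains a non-separator vertex whose $\bm z^B$-coordinate is strictly sub-unit, preventing $g_{C_\ell}$ from collapsing to $|x_D|$.
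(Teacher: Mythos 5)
Your main argument (both $B_1$ and $B_2$ nonempty) is exactly the paper's proof: write $g(\bm z^B) = g_{C_1}(\bm z^B_{C_1}) + g_{C_2}(\bm z^B_{C_2}) - |\gamma_D|$, lower-bound each clique gauge by $1$ (since each $\bm z^B_{C_\ell}$ has a unit coordinate in $B_\ell$), and use $|\gamma_D| < 1$ to get $g(\bm z^B) \geq 2 - |\gamma_D| > 1$. That part is correct.

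However, your extended digression about the one-empty case is misconceived and its key claim is false. The proposition is implicitly about joint extremes \emph{across} cliques, so both $B_1$ and $B_2$ are understood to be nonempty (the paper's own proof writes $g_{B_\ell,\dots}(\bm 1_{B_\ell},\dots)\ge 1$, which presumes this). If, say, $B_2 = \emptyset$, the conclusion $g(\bm z^B) > 1$ is not merely ``subtle'' — it is simply \emph{false} in general, so no salvage is possible. Concretely, take $d=3$, $C_1 = \{1,2\}$, $C_2 = \{2,3\}$, $D = \{2\}$, with $g_{C_1}(x_1,x_2) = \max(x_1,x_2)$ and $g_{C_2}(x_2,x_3) = \max(x_2,x_3)$ (the comonotone gauge, a perfectly valid limit-set boundary). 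With $B_1 = \{1\}$, $B_2 = \emptyset$ and $\gamma_2 = \gamma_3 = 0$, one gets $g(\bm z^B) = 1 + 0 - 0 = 1$, so $\{1\}$ \emph{is} a geometric extreme direction. This also shows that the ``extra input'' you propose — that a clique strictly containing the separator cannot have $g_{C_2}$ collapse to $|x_D|$ — is not true: complete dependence does exactly that. So the correct disposition of the one-empty case is not a refined argument but the recognition that it lies outside the proposition's scope; invoking $|D| < |C_\ell|$ buys you nothing here.
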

\begin{proof}
    The gauge function evaluated at $\bm{z}^B$ is
    \begin{align*}
        g(\bm{z}^B) = g_{B_1,C_1\setminus \{B_1 \cup D\},D}(\bm{1}_{B_1},\gamma_{C_1\setminus \{B_1 \cup D\}},\gamma_D) + g_{B_2,C_2\setminus \{B_2 \cup D\},D}(\bm{1}_{B_2},\gamma_{C_1\setminus \{B_1 \cup D\}},\gamma_D) - |\gamma_D|.
    \end{align*} 
    We have $g_{B_1,C_1\setminus \{B_1 \cup D\},D}(\bm{1}_{B_1},\gamma_{C_1\setminus \{B_1 \cup D\}},\gamma_D) \geq 1$, $g_{B_2,C_2\setminus \{B_2 \cup D\},D}(\bm{1}_{B_2},\gamma_{C_1\setminus \{B_1 \cup D\}},\gamma_D) \geq 1$ and $|\gamma_D|<1$, meaning that $g(\bm{z}^B)>1$.
\end{proof}

\begin{proposition}
\label{prop:jtexbg}
    Consider a block graph with clique set $\mathcal{C}=\{C_1,\ldots,C_N\}$ and separator set, $\mathcal{D}=\{D_2,\ldots,D_N\}$. Define $A \subset \Vset$ with $\bm{z}^A$ as in Definition~\ref{def:jtex}. Suppose that at least one element of $A$ lies in $C_1$ and at least one element of $A$ lies in $C_M$ with $1<M \leq N$. Then a minimum requirement for $g(\bm{z}^A)=1$ is that $\cup_{i=2}^M D_i \subset A$, i.e., we cannot have joint extremes across cliques without including the separator variables between those cliques.
\end{proposition}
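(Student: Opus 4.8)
The plan is to generalize the two-clique argument of Proposition~\ref{prop:jtex2clique} by exploiting the decomposition of the block graph along the chain of cliques $C_1,\dots,C_M$ and using the lower bound $g_C(\bm{x}_C) \geq \max_{j\in C}|x_j|$ together with the strict inequality $|\gamma_j|<1$ for indices not in $A$. The key structural fact I would invoke is that any connected block graph admits an ordering of its cliques $C_1,\dots,C_N$ such that $S_i := C_i \cap (C_1 \cup \cdots \cup C_{i-1})$ is a single separator $D_i$ contained in one earlier clique (a running-intersection / perfect elimination ordering); without loss of generality I would relabel so that $C_1,\dots,C_M$ form a sub-chain in which consecutive cliques communicate only through the $D_i$, so that the separators $D_2,\dots,D_M$ are exactly the ones lying "between" $C_1$ and $C_M$ on the tree of cliques. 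Evaluating the gauge at $\bm{z}^A$ via \eqref{eq:bgg} gives $g(\bm{z}^A) = \sum_{C\in\mathcal{C}} g_C(\bm{z}^A_C) - \sum_{D\in\mathcal{D}} |z^A_D|$, and I would isolate the contributions of $C_1,\dots,C_M$ and $D_2,\dots,D_M$ while bounding all remaining clique terms below by the corresponding separator terms they must cancel (using $g_C(\bm{z}^A_C)\ge |z^A_{D}|$ for any $D\subseteq C$), so that those extra terms contribute nonnegatively and can be dropped from a lower bound.

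Next I would argue by contradiction: suppose $g(\bm{z}^A)=1$ but some separator index $\ell\in \bigcup_{i=2}^M D_i$ is not in $A$, so $|z^A_\ell|=|\gamma_\ell|<1$. Telescoping the chain $C_1,\dots,C_M$, the relevant part of $g(\bm{z}^A)$ is $\sum_{i=1}^M g_{C_i}(\bm{z}^A_{C_i}) - \sum_{i=2}^M |z^A_{D_i}|$. Using $g_{C_i}(\bm{z}^A_{C_i}) \ge \max_{j\in C_i}|z^A_j| \ge |z^A_{D_i}|$ and also $g_{C_i}(\bm{z}^A_{C_i}) \ge |z^A_{D_{i+1}}|$ when $D_{i+1}\subseteq C_i$, I would show the chain contribution is at least $1$, with the slack controlled by how close the $|z^A_{D_i}|$ are to $1$; since at least one of them equals $|\gamma_\ell|<1$, and since $C_1$ (resp. $C_M$) contains an index of $A$ forcing $g_{C_1}(\bm{z}^A_{C_1})\ge 1$ (resp. $g_{C_M}\ge 1$), a careful accounting yields that the chain contribution strictly exceeds $1$. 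Combined with the nonnegative leftover contributions, this gives $g(\bm{z}^A)>1$, the desired contradiction.

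The main obstacle I anticipate is making the telescoping bound tight and clean when the separators $D_i$ are not singletons in general — though here they are, since we are in a block graph, so each $|z^A_{D_i}|$ is a single coordinate $|z^A_{d_i}|$ — and, more subtly, handling the branching structure: the cliques $C_2,\dots,C_{M-1}$ may have additional separators attaching further sub-trees of cliques not on the $C_1$–$C_M$ path, and one must verify these side-contributions are genuinely nonnegative rather than negative. This is where the inequality $g_C(\bm{x}_C)\ge\max_{j\in C}|x_j|$ does the work: each clique $C$ off the path carries one separator $D$ back toward the path, and $g_C \ge |z^A_D|$ ensures the pair (clique term, separator term) nets out $\ge 0$, by the same bookkeeping as in Proposition~\ref{prop:blockjtex}. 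Once the leftover terms are shown nonnegative, the problem reduces to the one-dimensional chain inequality, which is elementary. I would present the argument by first reducing to the chain via this "prune the side-trees" step, then doing the chain computation explicitly, mirroring the style of the short proof of Proposition~\ref{prop:jtex2clique}.
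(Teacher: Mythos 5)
Your proposal follows essentially the same route as the paper: relabel so that $C_1,\dots,C_M$ form a chain under running intersection, write $g(\bm z^A)$ via \eqref{eq:bgg}, discard the off-path clique/separator pairs using $g_{C_k}(\bm z^A_{C_k}) - |z^A_{D_k}| \ge 0$, and then argue on the remaining chain using $g_{C_1}(\bm z^A_{C_1})\ge 1$ and $g_{C_M}(\bm z^A_{C_M})\ge 1$. The one place where your sketch underspecifies the argument is the line ``a careful accounting yields that the chain contribution strictly exceeds $1$.'' The facts $g_{C_1}\ge 1$, $g_{C_M}\ge 1$, and $|z^A_{D_k}|<1$ for \emph{some} $k$ do not, on their own, exhibit a strictly positive summand: you need to pair the small separator with a clique whose gauge is $\ge 1$, which requires propagating the $1$-coordinate along the chain. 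The paper's proof does this by iterating from $C_M$ backwards: if $|z^A_{D_M}|<1$ you are done since $g_{C_M}\ge1$; otherwise $z^A_{D_M}=1$, and $D_M\subset C_{M-1}$ forces $g_{C_{M-1}}\ge 1$, so you repeat with $D_{M-1}$, and so on. Equivalently, setting $k^\star := \max\{k\in\{2,\dots,M\}: |z^A_{D_k}|<1\}$, the chain structure (via $D_{k^\star+1}\subset C_{k^\star}$, or $A\cap C_M\neq\emptyset$ when $k^\star=M$) gives $g_{C_{k^\star}}\ge 1 > |z^A_{D_{k^\star}}|$, whence the strict inequality. You already record the required ingredient ($g_{C_i}\ge |z^A_{D_{i+1}}|$ when $D_{i+1}\subset C_i$), so the fix is simply to make this propagation explicit rather than implicit.
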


The proof of Proposition~\ref{prop:jtexbg} is in Appendix~\ref{app:jtex}.

\subsection{Relation between \texorpdfstring{$\alpha$}{a}-coefficients and geometric extreme directions}

There is a strong relationship between $\alpha$-coefficients discussed in Sections~\ref{sec:alpha_coeffs} and~\ref{sec:alphacoefflaplace} and the geometric extreme directions. Recall that the $\alpha$-coefficients satisfy $g_{\{i\}\cup \{\Vset\sm \{i\}\}}(1, \bm
\alpha_{\Vset\sm\{i\}\mid i}) =1$. This implies that there is a set $A\subseteq\{1,\ldots,d\}$ that is given by
\begin{align}
A=\{i\}\cup\{j \in \Vset\sm\{i\}: \alpha_{j|i}=1\}. \label{eq:Aalpha}
\end{align}
The corresponding $\bm{z}^A$ takes value 1 in the $i$th coordinate, and $\bm
\alpha_{\Vset\sm\{i\}\mid i}$ in the other coordinates. However, the $\alpha$-coefficients alone may not give complete information on the geometric extreme directions. This is because there may be multiple such vectors $\widetilde{\bm{\alpha}}_{\Vset\sm\{i\}\mid i}$ satisfying $g_{\{i\}\cup \{\Vset\sm \{i\}\}}(1, \widetilde{\bm{\alpha}}_{\Vset\sm\{i\}\mid i}) =1$, and $\bm
\alpha_{\Vset\sm\{i\}\mid i}$ is the coordinatewise maximum of all of these. For example, suppose that $\bm
\alpha_{\Vset\sm\{i\}\mid i} = (1,\ldots,1)$, so that $A = \{1,\ldots,d\} \in \mathcal{A}$. Then there may exist other geometric extreme directions, but we cannot determine them from the collection of vectors $\bm{\alpha}_{\Vset\sm\{i\}\mid i}, i \in \Vset$. On the other hand, if there is a single vector $\bm
\alpha_{\Vset\sm\{i\}\mid i}$ satisfying $g_{\{i\}\cup \{\Vset\sm \{i\}\}}(1, \bm{\alpha}_{\Vset\sm\{i\}\mid i}) =1$ for each $i \in \Vset$ then these completely determine the geometric extreme directions via relation~\eqref{eq:Aalpha}. In Example~\ref{ex:LogGauss123}, these vectors would be $\bm\alpha_{\{2,3\}|1} = (1,\rho^2)$, $\bm\alpha_{\{1,3\}|2} = (1,\rho^2)$ and $\bm\alpha_{\{1,2\}|3} = (\rho^2,\rho^2)$. The first two of these yield $A=\{1,2\}$, and the third gives $A=\{3\}$, so $\mathcal{A}=\{\{1,2\},\{3\}\}$, as already observed.

\subsection{A tree graphical construction for joint extremes}

In several parametric bivariate asymptotically dependent examples for which there is a limit set, the following form of gauge function arises in exponential margins:
\begin{align}
    g(x_1,x_2) = \frac{x_1}{\theta} + \frac{x_2}{\gamma} + \left(1-\frac{1}{\theta}-\frac{1}{\gamma}\right)\min(x_1,x_2), \qquad \theta,\gamma \in (0,1). \label{eq:ADBVgauge}
\end{align}
We specify to the exponential margin case here since asymptotic dependence is linked to the behaviour of samples in the positive quadrant, and behaviour in other quadrants could be anything. We consider the effect of constructing a tree graphical model with bivariate component gauges of this form. The following lemma provides a building block for the main proposition.

\begin{lemma}\label{lem:bvad}
    Consider a simple chain graph with $\Vset=\{1,2,3\}$ and $\Eset = \{(1,2),(2,3)\}$, so that $g(x_1,x_2,x_3) = g_{\{1,2\}}(x_{1},x_{2}) + g_{\{2,3\}}(x_{1},x_{2}) - x_{2}$. If the bivariate component gauge functions are $g_{\{i,j\}}(x_i,x_j) =\frac{x_1}{\theta_{ij}} + \frac{x_2}{\gamma_{ij}} + \left(1-\frac{1}{\theta_{ij}}-\frac{1}{\gamma_{ij}}\right)\min(x_i,x_j) $, $(i,j) = (1,2), (2,3)$, then the marginal gauge function $g_{\{1,3\}}$ is
    \begin{align*}
          g_{\{1,3\}}(x_{1},x_{3}) = \frac{x_{1}}{\max(\theta_{1 2},\theta_{23})}+ \frac{x_{3}}{\max(\gamma_{12},\gamma_{23})} + \left(1-\frac{1}{\max(\theta_{12},\theta_{23})}-\frac{1}{\max(\gamma_{12},\gamma_{23})}\right)\min(x_{1},x_{3})  .
    \end{align*}
\end{lemma}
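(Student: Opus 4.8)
The plan is to compute the marginal gauge function $g_{\{1,3\}}$ directly from the definition in equation~\eqref{eq:min_gauge}, namely $g_{\{1,3\}}(x_1,x_3) = \min_{x_2 \geq 0} \left[ g_{\{1,2\}}(x_1,x_2) + g_{\{2,3\}}(x_2,x_3) - x_2 \right]$, and show that the minimizing value of $x_2$ leads to the claimed closed form. Writing out the objective, using $\min(x_i,x_j) = \tfrac{1}{2}(x_i + x_j - |x_i - x_j|)$ if helpful, or more straightforwardly splitting into the regions $x_2 \leq \min(x_1,x_3)$, $x_1 \leq x_2 \leq x_3$ (and symmetric), and $x_2 \geq \max(x_1,x_3)$, the objective becomes piecewise linear in $x_2$ on each region. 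Since a piecewise-linear function attains its minimum at a breakpoint (here $x_2 = 0$, $x_2 = x_1$, $x_2 = x_3$, or $+\infty$), I would evaluate the objective at these candidate points and take the smallest.

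First I would substitute and collect terms. On the region $x_2 \le \min(x_1,x_3)$, the objective is $\tfrac{x_1}{\theta_{12}} + \tfrac{x_3}{\gamma_{23}} + x_2\left( \tfrac{1}{\gamma_{12}} + \tfrac{1}{\theta_{23}} - 1 + (1 - \tfrac{1}{\theta_{12}} - \tfrac{1}{\gamma_{12}}) + (1 - \tfrac{1}{\theta_{23}} - \tfrac{1}{\gamma_{23}}) \right)$, and one checks the coefficient of $x_2$ simplifies to $1 - \tfrac{1}{\theta_{12}} - \tfrac{1}{\gamma_{23}}$, whose sign is not fixed. Similar expressions arise on the other regions. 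I would then compare the values at $x_2 = 0$, $x_2 = x_1$, $x_2 = x_3$: by the homogeneity of the problem it suffices to check, say, $x_1 = 1$ and treat $x_3$ as a free nonnegative parameter (or check the two cases $x_1 \le x_3$ and $x_1 \ge x_3$). The key algebraic fact to extract is that the minimum over these candidates reproduces exactly $\tfrac{x_1}{\max(\theta_{12},\theta_{23})} + \tfrac{x_3}{\max(\gamma_{12},\gamma_{23})} + (1 - \tfrac{1}{\max(\theta_{12},\theta_{23})} - \tfrac{1}{\max(\gamma_{12},\gamma_{23})})\min(x_1,x_3)$, i.e., that minimizing the sum over the intermediate node has the effect of taking the \emph{larger} of the two $\theta$ parameters and, independently, the larger of the two $\gamma$ parameters.

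A cleaner route, which I would use to organize the computation, is to observe that the gauge in equation~\eqref{eq:ADBVgauge} can be written as $g_{\{i,j\}}(x_i,x_j) = \max\left( \tfrac{x_i}{\theta_{ij}} + x_j(1 - \tfrac{1}{\theta_{ij}}),\ x_i(1 - \tfrac{1}{\gamma_{ij}}) + \tfrac{x_j}{\gamma_{ij}} \right)$ — this is the pointwise maximum of two linear functions meeting at $x_i = x_j$, and one verifies it equals \eqref{eq:ADBVgauge} by checking the two cases $x_i \le x_j$ and $x_i \ge x_j$. Then $g_{\{1,2\}}(x_1,x_2) + g_{\{2,3\}}(x_2,x_3) - x_2$ is a max of four affine functions of $x_2$ (for fixed $x_1,x_3$), and $\min_{x_2\ge 0}$ of a max of affine functions is a linear program; its optimal value as a function of $(x_1,x_3)$ is itself piecewise linear, positively homogeneous, and I can identify it by evaluating at the vertices of the dual or, more concretely, by the case analysis above. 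Either way the verification is a finite, if slightly tedious, computation.

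The main obstacle is purely bookkeeping: keeping track of the signs of the several coefficients of $x_2$ (which govern whether the minimum over $x_2$ sits at $0$, at $x_1$, at $x_3$, or would run off to $+\infty$ — the last excluded since the gauge must satisfy $g \ge \max(x_1,x_3)$, forcing a finite minimizer) and then checking that the resulting value matches the claimed formula across all the subcases of how $\theta_{12}, \theta_{23}$ and $\gamma_{12},\gamma_{23}$ are ordered and of whether $x_1 \lessgtr x_3$. There is no conceptual difficulty — no limit or compactness argument is needed since everything is explicit — so I would present the max-of-linear-functions reformulation, state that minimizing over $x_2$ is a small linear program, and then record the optimal value, relegating the case checks to a short verification.
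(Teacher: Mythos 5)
Your approach is essentially the same as the paper's: both reduce the problem to minimizing a piecewise-linear function of $x_2$ over $[0,\infty)$ by splitting into regions according to the ordering of $x_1,x_2,x_3$, and then comparing values at candidate breakpoints. The paper makes this concrete by computing the sign of $\partial g/\partial x_2$ on each of six ordering regions (using $\theta,\gamma<1$ to pin down the signs), then substituting the resulting minimizers $x_2^\star\in\{\min(x_1,x_3),\max(x_1,x_3)\}$ and simplifying; note in particular that $x_2=0$ never attains the minimum since the derivative on $x_2\le\min(x_1,x_3)$ is strictly negative, so your candidate list $\{0,x_1,x_3\}$ contains one superfluous point.

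Your max-of-two-affines rewriting of \eqref{eq:ADBVgauge} is correct (the crossover direction is governed by $1/\theta_{ij}+1/\gamma_{ij}-1>0$), and it turns the whole objective into a max of four affine functions of $x_2$, so the one-dimensional LP framing is a genuine and arguably cleaner organizing device that the paper does not use. However, your write-up stops precisely where the paper's work begins: the verification that the minimizing value equals $\max(x_1,x_3)/\max(\gamma_{12},\gamma_{23})+(1-1/\max(\gamma_{12},\gamma_{23}))\min(x_1,x_3)$ on $x_1\le x_3$ (and the $\theta$ analogue on $x_3\le x_1$), after subcasing on which of $\gamma_{12},\gamma_{23}$ and $\theta_{12},\theta_{23}$ is larger, is the entire content of the lemma and is deferred in your proposal. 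To turn this into a proof you would need to actually tabulate the four affine pieces, determine the active one at each breakpoint, and check the formula in each of the subcases; the paper records that case analysis in full.
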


 Our main proposition concerning tree extremal graphical models with such gauge functions follows. The proofs of Lemma~\ref{lem:bvad} and Proposition~\ref{prop:treead} are in Appendix~\ref{app:jtex}.

\begin{proposition}
\label{prop:treead}
    Consider a tree extremal graphical model, with gauge function
    \begin{align*}
      g(\bm{x}) = \sum_{(i,j) \in \mathcal{E}} g_{\{i,j\}}(x_i,x_j) - x_i - x_j
+ \sum_{k \in \Vset} x_k,      
    \end{align*}
where all $g_{\{i,j\}}$ have the form in Lemma~\ref{lem:bvad}. For any $k<l \in \Vset$, let $\mbox{pa}(k,l) \subset \Eset$ denote the edges along the shortest path on the graph between $k$ and $l$. Then the marginal gauge function $g_{\{k,l\}}(x_k,x_l)$ equals
    \begin{IEEEeqnarray*}{rCl}
      \frac{x_k}{\max_{(i,j) \in \mbox{pa}(k,l)}(\theta_{ij})}&+& \frac{x_l}{\max_{(i,j) \in \mbox{pa}(k,l)}(\gamma_{ij})} + \\
      &&\left(1-\frac{1}{\max_{(i,j) \in \mbox{pa}(k,l)}(\theta_{ij})}-\frac{1}{\max_{(i,j) \in \mbox{pa}(k,l)}(\gamma_{ij})}\right)\min(x_k,x_l).
    \end{IEEEeqnarray*}
\end{proposition}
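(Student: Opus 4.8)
The plan is to marginalize the tree down to the unique path joining $k$ and $l$, and then to establish the formula on that path by induction on its length, with Lemma~\ref{lem:bvad} serving as the base case. The first step is the reduction to the path. Any leaf $v$ of $\GG$ with neighbour $u$, provided $v\notin\{k,l\}$, can be marginalized out at no cost: in the tree gauge~\eqref{eq:tegm} the variable $x_v$ appears only through the edge term $g_{\{u,v\}}(x_u,x_v)-x_u-x_v$ and the contribution $+x_v$ from $\sum_{m\in\Vset}x_m$, and since $\min_{x_v\geq 0}g_{\{u,v\}}(x_u,x_v)=g_{\{u\}}(x_u)=x_u$ by~\eqref{eq:min_gauge} and the exponential-margin normalization, one gets $\min_{x_v\geq 0}[g_{\{u,v\}}(x_u,x_v)-x_u]=0$. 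Hence marginalizing $x_v$ simply deletes $v$ and its incident edge, leaving the tree gauge of $\GG\setminus v$. Iterating over all leaves different from $k$ and $l$ reduces $\GG$ to the path $k=v_0,v_1,\ldots,v_m=l$ and yields the chain gauge $\sum_{t=0}^{m-1}g_{\{v_t,v_{t+1}\}}(x_{v_t},x_{v_{t+1}})-\sum_{t=1}^{m-1}x_{v_t}$, so that $g_{\{k,l\}}$ is its minimum over $x_{v_1},\ldots,x_{v_{m-1}}$; this is exactly the statement of Lemma~\ref{lem:blockgraphtopath}.

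\noindent\textbf{Induction on the path length.} For $m=1$ there is nothing to prove and $m=2$ is precisely Lemma~\ref{lem:bvad}. For $m\geq 3$ I would minimize first over $x_{v_1}$, which appears only in $g_{\{v_0,v_1\}}$, in $g_{\{v_1,v_2\}}$, and in the single term $-x_{v_1}$; the partial minimum
\[
\min_{x_{v_1}\geq 0}\bigl[\,g_{\{v_0,v_1\}}(x_{v_0},x_{v_1})+g_{\{v_1,v_2\}}(x_{v_1},x_{v_2})-x_{v_1}\,\bigr]
\]
is precisely the three-node chain of Lemma~\ref{lem:bvad} applied to $v_0,v_1,v_2$, hence equals a gauge $\widetilde g_{\{v_0,v_2\}}$ of the same two-parameter form, with parameters $\max(\theta_{v_0v_1},\theta_{v_1v_2})$ and $\max(\gamma_{v_0v_1},\gamma_{v_1v_2})$. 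Substituting back, the expression to be minimized becomes the chain gauge of the shortened path $v_0,v_2,v_3,\ldots,v_m$ whose first edge gauge $\widetilde g_{\{v_0,v_2\}}$ is again of the required form and whose remaining edge gauges are untouched, so it is indeed a chain graphical gauge, consistent with Lemma~\ref{lem:graph_marginalization}. Applying the inductive hypothesis to this path of length $m-1$, and using associativity of $\max$, gives $g_{\{k,l\}}$ in the claimed form with $x_k$-parameter $\max_t\theta_{v_{t-1}v_t}$ and $x_l$-parameter $\max_t\gamma_{v_{t-1}v_t}$, i.e.\ the maxima over $\mbox{pa}(k,l)$.

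\noindent\textbf{Where the work is.} The only genuinely computational step is the base case Lemma~\ref{lem:bvad}: as a function of $x_{v_1}$, the bracketed expression above is a sum of piecewise-linear convex functions with breakpoints at $x_{v_0}$ and $x_{v_2}$, so its minimizer and minimum value can be written down explicitly and matched to the stated form~\eqref{eq:ADBVgauge}. The remaining point requiring care is the bookkeeping of orientation: along a general path the edges need not be oriented consistently with the vertex labelling, so before running the induction I would orient the path from $k$ towards $l$ and, exploiting the symmetry that $g$ with parameters $(\theta,\gamma)$ evaluated at $(x_1,x_2)$ equals $g$ with parameters $(\gamma,\theta)$ evaluated at $(x_2,x_1)$, assign to each path edge the parameter $\theta$ at its endpoint nearer $k$ and $\gamma$ at its endpoint nearer $l$. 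With this convention the $\theta$-parameters and the $\gamma$-parameters aggregate separately via maxima exactly as in Lemma~\ref{lem:bvad}, and the induction proceeds verbatim. I expect this orientation bookkeeping, rather than any substantive difficulty, to be the only fiddly part of the argument.
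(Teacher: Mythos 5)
Your proposal is correct and follows essentially the same route as the paper's proof: reduce to the chain of shortest-path vertices via Lemma~\ref{lem:blockgraphtopath}, then induct on path length, using Lemma~\ref{lem:bvad} both as the base case and to collapse the first interior vertex at each inductive step. Your explicit remarks on why leaf-marginalization is free and on the orientation convention (which endpoint of each edge carries $\theta$ and which carries $\gamma$) spell out details the paper leaves implicit, but the underlying argument is the same.
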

We remark that the gauge function in equation~\eqref{eq:ADBVgauge} represents weaker dependence as the coefficients $\theta,\gamma$ increase. Therefore the result of Proposition~\ref{prop:treead} highlights the intuitive result that the strength of dependence between pairs is non-increasing with distance along the graph.

\section{Discussion and future directions}
\label{sec:discussion}

The definition of geometric extremal graphical models opens up the possibility of high dimensional statistical modelling via the frameworks of \citet{WadsworthCampbell24} or \citet{Papastathopoulosetal24}. When adopting a block graph structure, any valid choices of gauge functions on cliques will necessarily lead to a valid higher dimensional joint gauge; in particular, tree gauges permit high dimensional specification solely in terms of bivariate gauge functions, for which a wide variety of choices is available. This solves a key challenge in the geometric framework of specifying sufficiently flexible gauges in higher dimensions.

The theoretical results presented in this work form a useful foundation for understanding the properties of these models, as well as offering opportunities for structure learning, through estimation of various coefficients. We have shown that, in a variety of ways, dependence coefficients decay with distance along the graph: through a product form for $\alpha$-coefficients associated with the conditional extreme value model, and through a mixed product and maximum form for the $\beta$-coefficients of that model. When joint extremes occur, we still observe weakening dependence as we move further along the graph as exemplified in Proposition~\ref{prop:treead}. Future work could generalize such results to general block graphs with different forms of gauges satisfying $g_{C}(\bm{1})=1$ by defining new dependence coefficients for such gauges. We have also shown that joint extremes can occur across cliques only when the separator variables are included, which provides an important interpretation for fitted models.



Our work has centred on decomposable graphs, with a particular focus
on block graphs. The nature of the separator sets as singletons
reduces complexity for analysis, and is also beneficial for
constructing simpler statistical models since one does not have to
consider compatibility criteria between models for
$g_C, C\in \mathcal{C}$ and $g_D, D \in \mathcal{D}$ to ensure a valid
$d-$dimensional gauge function $g$. Nonetheless, we anticipate that
future work will consider the role of more complex graphs.

\section*{Acknowledgements} JW gratefully acknowledges funding from EPSRC grant EP/X010449/1.

\section*{Disclaimer} The authors used the ELM platform for generative
AI; University of Edinburgh, Edina
(\url{https://elm.edina.ac.uk/elm/elm}, model: ChatGPT-5), to help
structure, edit and validate the exposition of Lemma 4. All
mathematical content, derivations, and conclusions were developed and
verified by the authors.


\bibliographystyle{apalike} 
\bibliography{ETGMbib.bib}
\newpage

\appendix
\begin{center}
    \Large \textbf{Supplementary Material}
\end{center}
\section{Proofs associated with \texorpdfstring{$\alpha$}{a}-coefficients}
\label{sec:proofs}

\begin{proof}[Proof of Proposition~\ref{prop:joint_convergence}]
  \begin{enumerate}[leftmargin=0pt, label=(\roman*)]
  \item The proof is a direct extension of Proposition~5~(i) of \citet{NoldeWadsworth22}, included here for completeness. The assumed conditional extremes convergence~\eqref{eq:ce_densconv} on the density scale can be expressed as
  \begin{align}
    e^{t} f_{\{i\} \cup \Vset \sm \{i\}}(t, \bm{a}_{|i}(t) + \bm{b}_{|i}(t)\bm{z}_{|i}) \prod_{j \in  \Vset \sm \{i\}} \left[b_{j|i}(t)\right] \to  k_{\Vset \sm \{i\}\mid i}(\bm z_{|i}) = e^{-h_{\Vset \sm \{i\}\mid i}(\bm z_{|i})}, \label{eq:ce_densconv2}
  \end{align}
  which translates on the log scale to
    \begin{align}
    -\log f_{\{i\} \cup \Vset \sm \{i\}}(t, \bm{a}_{|i}(t) + \bm{b}_{|i}(t)\bm{z}_{|i}) -t -\sum_{j \in  \Vset \sm \{i\}} \log b_{j|i}(t) \to  h_{\Vset \sm \{i\}\mid i}(\bm z_{|i}), \label{eq:ce_log_densconv}
  \end{align}
  where $h_{\Vset \sm \{i\}\mid i}(\bm z_{|i})<\infty$ for all $\bm z_{|i} \in (0,\infty)^{d-1}$, since the support of the limit distribution includes $(0,\infty)$ in each margin. Let $\bm{x}_t = (1, \bm{a}_{|i}(t)/t + \bm{b}_{|i}(t)\bm{z}_{|i}/t) \to \bm{x}$, $t \to \infty$. Using assumption~\eqref{eq:gauge_densconv}, we have
  \begin{align}
      -\log f_{\{i\} \cup \Vset \sm \{i\}} (t, \bm{a}_{|i}(t) + \bm{b}_{|i}(t)\bm{z}_{|i}) = t g_{\{i\} \cup \Vset \sm \{i\}}(\bm{x}_t)[1+o(1)] = t g_{\{i\} \cup \Vset \sm \{i\}}(\bm{x})[1+o(1)]. \label{eq:ce_gauge_conv}
  \end{align}
Combining~\eqref{eq:ce_log_densconv} and~\eqref{eq:ce_gauge_conv} gives
\begin{align}
   g_{\{i\} \cup \Vset \sm \{i\}}(\bm{x}_t)[1+o(1)] = 1 +  h_{\Vset \sm \{i\}\mid i}(\bm z_{|i})/t + \sum_{j \in  \Vset \sm \{i\}} \log b_{j|i}(t) / t +o(1/t). \label{eq:ce_densgauge}
\end{align}
Suppose that for the first $j \in \Vset \sm \{i\}$, $b_{j|i}(t)/t \to \gamma_j>0$. Then $\bm{x}_t \to \bm{x} = (1, \alpha_{j|i} + \gamma_j z_{j|i}, \bm{\alpha}_{\setminus j | i})$, while taking $t \to \infty$ in~\eqref{eq:ce_densgauge} yields $g_{\{i\} \cup \Vset \sm \{i\}}(1, \alpha_{j|i} + \gamma_j z_{j|i}, \bm{\alpha}_{\setminus j | i}) = 1$ for any $z_{j|i}$. But since $g(\bm{x}) \geq \|\bm{x}\|_{\infty}$, this implies $z_{j|i} \leq (1-\alpha_{j|i})/\gamma_j$. No such upper bound applies, so we conclude $\gamma_j=0$, i.e., $b_{j|i}(t) = o(t)$. The same argument can be repeated for any index $j$. Therefore, taking limits in~\eqref{eq:ce_densgauge} yields $g_{\{i\} \cup \Vset \sm \{i\}}(1, \bm{\alpha}_{|i}) = 1$.
\item Suppose that there are $m \geq 2$ vectors $\bm{\alpha}^1 \neq \cdots \neq \bm{\alpha}^m$ satisfying $ g_{\{i\} \cup \{\Vset \sm \{i\}\}}(1, \bm \alpha^1) = \cdots = g_{\{i\} \cup \{\Vset \sm \{i\}\}}(1, \bm \alpha^m)  = 1$. Consider the marginal gauge function 
    \begin{align*}
        1 =g_{\{i,j\}}(1, \alpha_{j|i}) &= g_{\{i,j\}}(1, \alpha_{j|i}^1) \\
        & = g_{\{i\} \cup \{\Vset \sm \{i\}\}}(1, \bm \alpha^1)\\ 
        & = \ldots \\
        &= g_{\{i,j\}}(1, \alpha_{j|i}^m) \\
        &= g_{\{i\} \cup \{\Vset \sm \{i\}\}}(1, \bm \alpha^m)
    \end{align*}
    Using the convergence to types argument as in Proposition~5~(iii) of \citet{NoldeWadsworth22}, we have $\alpha_{j|i} = \max(\alpha_{j|i}^1,\ldots,\alpha_{j|i}^m)$. This argument holds true for any index $j \in \Vset \sm \{i\}$, so that $\bm{\alpha}_{\Vset\sm\{i\}|i} = \max(\bm{\alpha}^1,\ldots,\bm{\alpha}^m)$, where the maximum operation is applied componentwise. Note that this implies $\bm \alpha_{\Vset\sm\{i\}|i} = \bm{\alpha}^k$ for some $k\in \{1,\ldots, m\}$.
  \end{enumerate}
  \end{proof}

\begin{proof}[Proof of Proposition~\ref{prop:joint_convergence_sgn}]

The proof is very similar to that of Proposition~\ref{prop:joint_convergence}, beginning by replacing convergence~\eqref{eq:ce_densconv2} by the equivalent of~\eqref{eq:ce_densconvpm}, namely
  \begin{align}
    2e^{t} f_{\{i\} \cup \Vset \sm \{i\}}(\sgn\times t, \bm{a}^{\sgn}_{|i}(t) + \bm{b}^{\sgn}_{|i}(t)\bm{z}_{|i}) \prod_{j \in  \Vset \sm \{i\}} \left[b^{\sgn}_{j|i}(t)\right] \to  k^{\sgn}_{\Vset \sm \{i\}\mid i}(\bm z_{|i}) = e^{-h^{\sgn}_{\Vset \sm \{i\}\mid i}(\bm z_{|i})}. \label{eq:ce_densconv_lap}
  \end{align}
Following from equation~\eqref{eq:ce_gauge_conv}, we proceed in the same manner as before but with $\bm{x}_t = (\sgn \times 1,\bm{a}^{\sgn}_{|i}(t)/t + \bm{b}^{\sgn}_{|i}(t)\bm{z}_{|i}/t)) \to \bm{x} = (\sgn \times 1,\bm{\alpha}^{\sgn}_{|i})$, leading to the conclusion that $g_{\{i\} \cup \Vset \sm \{i\}}(\sgn \times 1, \bm{\alpha}^{\sgn}_{|i}) = 1$. The proof of part~(ii) is also entirely analogous to part~(ii) of Proposition~\ref{prop:joint_convergence}.
    
\end{proof}


\begin{proof}[Proof of Proposition~\ref{prop:block_graph_alphas}]

We begin by noting that as a block graph is a decomposable graph, the set of cliques can be ordered as
$\calC = \{{C}_1, \dots, C_N\}$ such that, for all $i=2, \dots, N$,
\begin{equation} D_i := C_i \cap ( \cup_{j=1}^{i-1} C_j ) \subset C_k
  \quad \text{for some } \, k < i;
  \label{eq:ri}
\end{equation}
see \citet[][Appendix A]{EngelkeHitz20}, or \citet[][Chapter 2]{Lauritzen96}.
This condition is called the running intersection property and the
elements of $\calD = (D_2,\dots,D_N)$ are termed the separators of the
graph.\ We will always assume that the set of cliques and the set of separators have been ordered so that \eqref{eq:ri} holds true.

For any $i$ and $j$, the set of cliques can be ordered so that $i\subset C_1$ and
$j\subset C_{m_{i j}}$ where $1\leq m_{i j}\leq N$
denotes the length of the shortest path between $i$ and $j$ in $\GG$.\ This shortest path can be expressed as $\{i, D_2, D_3, \ldots, D_{m_{ij}-1}, D_{m_{ij}},j\}$, where $D_k \subset C_{k}, C_{k-1}$ for $k=2,\ldots, m_{ij}$.\\
\\
We require
$\alpha_{j\mid i}=\max\{\widetilde{\alpha}_{j\mid
  i}\in(0,1]\,:\,g_{\{i,j\}}(1,\widetilde{\alpha}_{j\mid i})=1\}$.\
Using the result of Proposition~\ref{prop:joint_convergence}, that the
vector $\bm \alpha_{\Vset \sm \{i\} \mid i}$ is a global
minimizer,
  \begin{align*}
  1=  g_{\{i,j\}}(1,\alpha_{j\mid i})&= \min_{x_{s}\geq
      0\,:\, s\notin \{i,j\}} g_{i, \Vset \sm \{i,j\}, j}(1, \bm
    x_{\Vset \sm \{i, j\}},
    \alpha_{j|i})\\
    & = g_{\{i\} \cup \{\Vset \sm \{i\}\}}(1, \bm \alpha_{\Vset
      \sm \{i\} \mid i}) \\
    &={g_{\{i\}\cup
          \{C_1\sm \{i\}\}}(1,\bm{\alpha}_{C_1\sm \{i\}|i})} + \sum_{k=2}^{N}
      {g_{D_k\cup\{C_k \sm D_k\}}(\alpha_{D_k|i}, \bm \alpha_{C_k\sm D_k|i}) -
        \alpha_{D_k|i}}.
  \end{align*}
Since we require
  $g_{\{i,j\}}(1, \alpha_{j\mid i})=1$, we must have that
  \begin{align}
    g_{\{i\}\cup \{C_1\sm \{i\}\}}(1,\bm{\alpha}_{C_1\sm \{i\}|i}) = 1 \label{eq:giC1}
    \end{align}
    and
    \begin{align}   {g_{D_k\cup\{C_k \sm D_k\}}(\alpha_{D_k|i}, \bm \alpha_{C_k\sm D_k|i}) -
        \alpha_{D_k|i}} = 0, \qquad k=2,\ldots,N. \label{eq:gDkCkfull}\end{align} 
        Suppose firstly that $\alpha_{D_k|i}>0$ for all $k=2,\ldots,m_{ij}$. Factorising $\alpha_{D_k|i}$ out of equations~\eqref{eq:gDkCkfull} gives
              \begin{align} g_{D_k\cup\{C_k \sm D_k\}}(1, \bm \alpha_{C_k\sm
      D_k|i}/\alpha_{D_k|i}) = 1, \qquad k=2,\ldots,m_{ij}.\label{eq:gDkCk}
  \end{align}
  Recognizing that this defines the vector $\bm{\alpha}_{C_k\sm
      D_k|D_k}$, we have
      \begin{align}
         \bm{\alpha}_{C_k\sm
      D_k|D_k} & = \bm \alpha_{C_k\sm
      D_k|i}/\alpha_{D_k|i},\qquad k=2,\ldots,m_{ij}.\label{eq:alpharelation}
      \end{align}
      Now $j \subset C_{m_{ij}}$, so~\eqref{eq:alpharelation} yields
      $\alpha_{j|i} = \alpha_{D_{m_{ij}}|i} \alpha_{j|D_{m_{ij}}}$. If $m_{ij}=2$, then $\alpha_{j|i}=\alpha_{D_2|i}\alpha_{j|D_2}$. Otherwise, for $k\geq 3$, the system
      of equations~\eqref{eq:alpharelation} provides a recurrence
      relation for $\alpha_{D_{m_{ij}}|i}$. Specifically, since
      $D_k \subset C_k, C_{k-1}$,
      \begin{align}
          \alpha_{D_k|i} = \alpha_{D_{k-1}|i} \alpha_{D_k|D_{k-1}}, \qquad k=3,\ldots, m_{ij}, \label{eq:alpharecursion}
      \end{align}
      and so
  \begin{align}
    \alpha_{j|i} =  \alpha_{D_2|i} \left[\prod_{k=3}^{m_{ij}} \alpha_{D_{k}|D_{k-1}}\right] \alpha_{j|D_{m_{ij}}}. \label{eq:alphaprod}
\end{align}
Now consider the case where $\alpha_{D_k|i}=0$ for at least one $k=2,\ldots,m_{ij}$. If $\alpha_{D_k|i} = 0$ and
\begin{align}
    {g_{D_k\cup\{C_k \sm D_k\}}(\alpha_{D_k|i}, \bm \alpha_{C_k\sm D_k|i}) -
        \alpha_{D_k|i}} = 0, \label{eq:gCkDkalpha0}
\end{align}
then $g_{D_k\cup\{C_k \sm D_k\}}(0, \bm \alpha_{C_k\sm D_k|i}) = 0$,
which implies $\bm \alpha_{C_k\sm D_k|i} = \bm{0}$, since
$g_{D_k\cup\{C_k \sm D_k\}}(\bm{x}) \geq \|\bm{x}\|_{\infty}$. Since
$D_{k+1} \subset C_k$, we therefore have $\alpha_{D_{k+1}|i} = 0$
also. By iteration therefore,
$\alpha_{D_k|i} = \alpha_{D_{k+1}|i} = \cdots = \alpha_{D_{m_{ij}}|i} =
0$. Using equation~\eqref{eq:gCkDkalpha0} with $k=m_{ij}$ and
$\alpha_{D_{m_{ij}}|i}= 0$ we get $\alpha_{j|i}=0$.

Suppose that $k^\star = \min_{k \in 2,\ldots,m_{ij} }\{k: \alpha_{D_{k}|i}=0\}$. By taking $j=D_{k^\star-1}$, we can see that $\alpha_{D_{k^\star-1}|i}$ can be expressed as in the product~\eqref{eq:alphaprod}, and equation~\eqref{eq:alpharecursion} gives $\alpha_{D_k^\star|i} = \alpha_{D_k^\star|D_{k^\star}-1}\alpha_{D_{k^\star-1}|i}$, which implies $\alpha_{D_{k^\star}|D_{k^\star-1}} = 0$. Therefore $\alpha_{j|i}=0$ can be expressed through product~\eqref{eq:alphaprod} also. 
\end{proof}

\begin{proof}[Proof of Proposition~\ref{prop:block_graph_alphas_lap}]
  Again, we follow a similar argument to Proposition~\ref{prop:block_graph_alphas}, with $\alpha^+_{j\mid i}=\max\{\widetilde{\alpha}_{j\mid
    i}\in[-1,1]\,:\,g_{\{i,j\}}(1,\widetilde{\alpha}_{j\mid i})=1\}$, and
  \begin{align*}
  1=  g_{\{i,j\}}(1,\alpha^+_{j\mid i})&= \min_{x_{s} \in \mathbb{R}^{d-2}\,:\, s\notin \{i,j\}} g_{i, \Vset \sm \{i,j\}, j}(1, \bm
    x_{\Vset \sm \{i, j\}},
    \alpha^+_{j|i})\\
    & = g_{\{i\} \cup \{\Vset \sm \{i\}\}}(1, \bm \alpha^+_{\Vset
      \sm \{i\} \mid i}) \\
    &={g_{\{i\}\cup
          \{C_1\sm \{i\}\}}(1,\bm{\alpha}^+_{C_1\sm \{i\}|i})} + \sum_{k=2}^{N}
      {g_{D_k\cup\{C_k \sm D_k\}}(\alpha^+_{D_k|i}, \bm \alpha^+_{C_k\sm D_k|i}) -
        |\alpha^+_{D_k|i}|},
  \end{align*}
where  $g_{\{i\}\cup
          \{C_1\sm \{i\}\}}(1,\bm{\alpha}^+_{C_1\sm \{i\}|i}) = 1$ and $\sum_{k=2}^{N}
      g_{D_k\cup\{C_k \sm D_k\}}(\alpha^+_{D_k|i}, \bm \alpha^+_{C_k\sm D_k|i}) -
        |\alpha^+_{D_k|i}| = 0$.

As before, begin with the case $|\alpha^+_{D_k|i}|>0$, $k=2,\ldots,m_{ij}$, so that
\begin{align*}
|\alpha^+_{D_k|i}| \left[g_{D_k\cup\{C_k \sm D_k\}}\left(\sgn(\alpha^+_{D_k|i}) \times 1, \bm \alpha^+_{C_k\sm D_k|i}/|\alpha^+_{D_k|i}|\right) -
        1\right] = 0, \qquad k=2,\ldots,m_{ij},
\end{align*}
giving 
\begin{align}
\bm{\alpha}^{\sgn(\alpha^+_{D_k|i})}_{C_k\sm D_k|D_k} = \bm{\alpha}^+_{C_k\sm D_k|i}/|\alpha^+_{D_k|i}|. \label{eq:alphareclaplace}
\end{align}
For $m_{ij}=2$, this yields $\alpha_{j|i}^+ = |\alpha_{D_2|i}^+| \alpha_{j|D_2}^{\sgn(\alpha_{D_2|i}^+)}$. For $m_{ij} \geq 3$, equation~\eqref{eq:alphareclaplace} leads to the relation $\alpha^+_{D_k|i} = \alpha^{\sgn(\alpha^+_{D_{k-1}|i})}_{D_k|D_{k-1}}|\alpha^+_{D_{k-1}|i}|$, $k=3,\ldots, m_{ij}$. Overall we thus have
\begin{align}
    \alpha_{j\mid i}^+ = \left|\alpha_{D_2|i}^+\right| \prod_{k=3}^{m_{ij}} \left|\alpha_{D_k|D_{k-1}}^{\sgn(\alpha^+_{D_{k-1}|i})}\right| \times \alpha_{j|D_{m_{ij}}}^{\sgn(\alpha_{D_{m_{ij}}|i}^+)}. \label{eq:alphaprodlap}
\end{align}
When $|\alpha^+_{D_k|i}|=0$ for some $k$, an analogous argument to that in Proposition~\ref{prop:block_graph_alphas} leads to the conclusion that $\alpha^+_{j|i} = 0$. Similarly we can show that if $\alpha^+_{D_{k^\star}|i}=0$ and $|\alpha^+_{D_{k^\star-1}|i}| \neq 0$, then $\alpha^{\sgn(\alpha^+_{D_{k-1}|i})}_{D_k|D_{k-1}} = 0$, so that the result $\alpha_{j|i}^+=0$ can be expressed in the form of product~\eqref{eq:alphaprodlap}.

\end{proof}


\section{{Proofs associated with \texorpdfstring{$\beta$}{b}-coefficients}}
\label{app:proofbetas}
\subsection{Marginalization properties of gauge functions}
\label{sec:gauge_marginalization}
\begin{lemma}
  \label{lem:blockgraphtopath}
  Let $\GG = (\Vset,\Eset)$ be a block geometric extremal graphical model. For any two indices $(i,j) \in \Vset$, the marginal gauge function $g_{\{i,j\}}(x_i,x_j)$ may be obtained solely through consideration of the chain graph with nodes $\Vset'=\{i, D_2, D_3, \ldots, D_{m_{ij}-1}, D_{m_{ij}},j\}$ lying on the unique shortest path between $i$ and $j$.
\end{lemma}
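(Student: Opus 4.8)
The plan is to evaluate $g_{\{i,j\}}(x_i,x_j)=\min_{x_s:\, s\notin\{i,j\}}g(\bm x)$ from \eqref{eq:min_gauge}, performing the minimisation in a convenient order that exploits the fact that \eqref{eq:bgg} couples its clique terms only through \emph{singleton} separators. First I would fix an ordering of the cliques $C_1,\dots,C_N$ satisfying the running intersection property \eqref{eq:ri}, chosen exactly as in the proof of Proposition~\ref{prop:block_graph_alphas}, so that $i\in C_1$, $j\in C_{m_{ij}}$, the separators along the shortest path are the singletons $D_2,\dots,D_{m_{ij}}$ with $D_k=C_{k-1}\cap C_k$, and $C_1-C_2-\cdots-C_{m_{ij}}$ is the ``spine'' of the junction tree of $\GG$, every other clique hanging off it in a subtree. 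Because the feasible region $\prod_{s\notin\{i,j\}}\mathcal S$ is a product, the minimisation may be split and reordered freely, so I would first eliminate the variables attached to vertices off the shortest path, and then those on it other than $i,j$.

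The elimination of off-spine vertices is an induction on the number of off-spine cliques, the inductive step being to delete an off-spine \emph{leaf} $C$ of the current junction tree. Writing $D=\{d\}$ for the singleton separator joining $C$ to the rest, the vertices of $C\setminus D$ are private to $C$ — they appear in no other clique gauge — and contain neither $i$ nor $j$; hence, using \eqref{eq:min_gauge} within $g_C$ and the identity $g_{\{d\}}(x_d)=|x_d|$,
\begin{equation*}
  \min_{x_s:\, s\in C\setminus D} \big[g_C(\bm x_C)-|x_d|\big]=(g_C)_{\{d\}}(x_d)-|x_d|=|x_d|-|x_d|=0,
\end{equation*}
so that after this minimisation we are left with the block-graph gauge \eqref{eq:bgg} of $\GG$ with the leaf $C$ removed. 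Iterating until only the spine remains shows that $g_{\{i,j\}}$ equals the minimum over all vertices of $\bigcup_{k=1}^{m_{ij}}C_k$ other than $i,j$ of the spine gauge $\sum_{k=1}^{m_{ij}}g_{C_k}(\bm x_{C_k})-\sum_{k=2}^{m_{ij}}|x_{D_k}|$.

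It then remains to eliminate the private vertices of the spine cliques. Each $C_k$ meets the shortest path in a set $P_k$ — equal to $\{i,D_2\}$, $\{D_k,D_{k+1}\}$, or $\{D_{m_{ij}},j\}$ according as $k=1$, $1<k<m_{ij}$, or $k=m_{ij}$ — and the sets $C_k\setminus P_k$ are private to their respective cliques and pairwise disjoint, so the minimisation over them splits clique by clique and, by \eqref{eq:min_gauge}, replaces each $g_{C_k}$ by its bivariate marginal $(g_{C_k})_{P_k}$. The only variables still free are $x_{D_2},\dots,x_{D_{m_{ij}}}$, all on the path, leaving
\begin{equation*}
  g_{\{i,j\}}(x_i,x_j)=\min_{x_{D_2},\dots,x_{D_{m_{ij}}}}\Big[\sum_{k=1}^{m_{ij}}(g_{C_k})_{P_k}(\bm x_{P_k})-\sum_{k=2}^{m_{ij}}|x_{D_k}|\Big],
\end{equation*}
which is precisely \eqref{eq:min_gauge} for the marginal on $\{i,j\}$ of the chain geometric extremal graphical model with vertex set $\Vset'=\{i,D_2,\dots,D_{m_{ij}},j\}$, edge gauges $(g_{C_k})_{P_k}$ and separators $D_2,\dots,D_{m_{ij}}$ — which is the claimed statement.

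I expect the only genuinely delicate point to be the bookkeeping of the pruning step: one must verify that the private set $C\setminus D$ of an off-spine leaf appears in a single clique gauge and contains neither $i$ nor $j$, so that deleting it disturbs neither the other clique terms nor the conditioning coordinates — both facts are immediate from the clique-tree structure of a decomposable graph, but the cancellation $g_C-|x_d|\mapsto 0$ must be phrased so that the remaining object is unambiguously the block-graph gauge of the pruned graph. No new analytic ideas are needed; the argument is purely combinatorial on top of the marginalisation identity \eqref{eq:min_gauge}. The degenerate case $m_{ij}=1$ (so $i,j$ share a clique and $\Vset'=\{i,j\}$, with an empty set of separators) is covered by the same argument.
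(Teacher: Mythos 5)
Your proposal is correct and reaches the same conclusion as the paper by essentially the same mechanism: fix a clique ordering so that the spine $C_1,\dots,C_{m_{ij}}$ runs along the shortest path, eliminate the off-path coordinates, and observe that the surviving terms form a chain gauge on $\Vset'$. The stylistic difference is in how the off-path coordinates are removed. The paper partitions $\Vset\setminus\Vset'$ directly into ``private'' vertices (appearing in a single clique) and off-path separators, and minimizes over these in two blocks in a single display, relying on the reader to track which clique gauges reduce to which marginals. You instead organize the same elimination as an induction on off-spine leaf cliques, using at each step the identity
\[
\min_{x_s:\,s\in C\setminus D}\bigl[g_C(\bm x_C)-|x_d|\bigr]=(g_C)_{\{d\}}(x_d)-|x_d|=0,
\]
so that a pruned leaf contributes nothing and the residual is manifestly again a block-graph gauge. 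This leaf-pruning form makes the bookkeeping — in particular that $C\setminus D$ contains neither $i$ nor $j$ and appears in no other clique term, and that off-path separators eventually become private and vanish — completely local, which is arguably cleaner than the paper's one-shot evaluation; both routes then finish identically by marginalizing each spine clique $g_{C_k}$ to its bivariate restriction $(g_{C_k})_{P_k}$. Your observation that each spine clique meets the shortest path in exactly two vertices (otherwise the path could be shortened within the clique) is also implicit in the paper and correct. One small caveat: you should state that the pruning is performed in an order compatible with the running intersection property (e.g.\ always delete a current leaf of the clique tree), so that after each deletion the remaining cliques and separators still satisfy \eqref{eq:ri}; this is automatic for the leaf-deletion scheme you describe, but worth saying. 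In sum, the proposal is correct and takes essentially the same route as the paper, presented with a tidier inductive structure.
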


\begin{proof}[Proof of Lemma~\ref{lem:blockgraphtopath}]
  As in the proof of Proposition~\ref{prop:block_graph_alphas},
  we order the maximal cliques
  $C_1,C_2,\dots,C_N$ so that $i\subset C_1$ and
  $j\subset C_{m_{i j}}$ where $1\leq m_{i j}\leq N$ denotes the length of the shortest path between $i$ and $j$ in $\GG$.\ Recall that this shortest path can be expressed as $\{i, D_2, D_3, \ldots, D_{m_{ij}-1}, D_{m_{ij}},j\}=:\Vset'$, where $D_k \subset C_{k}, C_{k-1}$ for $k=2,\ldots, m_{ij}$. We have
  \begin{align*}
    g_{\{i,j\}}(x_i,x_j) &= \min_{x_s \geq 0, s \neq i,j} g_{C_1}(\bm x_{C_1}) + \sum_{k=2}^{m_{ij}} [g_{C_k}(\bm{x}_{C_k})- x_{D_k}] +\sum_{k=m_{ij}+1}^N [g_{C_k}(\bm{x}_{C_k})- x_{D_k}]\\
                         &= \min_{x_s:s=D_2,\ldots,D_{m_{ij}}} \min_{x_s: s\in \Vset \sm \Vset'} g_{C_1}(\bm{x}_{C_1}) + \sum_{k=2}^{m_{ij}} [g_{C_k}(\bm{x}_{C_k})- x_{D_k}] +\sum_{k=m_{ij}+1}^N [g_{C_k}(\bm{x}_{C_k})- x_{D_k}]
  \end{align*}
  Partition the index set $\Vset\sm\Vset'$ into two disjoint sets. Let $(\Vset\sm\Vset')_1$ represent the indices lying only in a single clique, and $(\Vset\sm\Vset')_2$ represent indices in two cliques (i.e., the separators $D_{m_{ij}+1}, \ldots, D_N$, or $D_{m_{ij}+2}, \ldots, D_N$, if $j=D_{m_{ij}+1}$). If $j=D_{m_{ij}+1}$ is a separator, then
  
  
    \begin{IEEEeqnarray}{rCl}
    g_{\{i,j\}}(x_i,x_j) &=& \min_{x_s:s=D_2,\ldots,D_{m_{ij}}} \min_{x_s: s\in (\Vset \sm \Vset')_2} \min_{x_s: s\in (\Vset \sm \Vset')_1}  \left\{g_{C_1}(\bm{x}_{C_1}) + \sum_{k=2}^{m_{ij}} [g_{C_k}(\bm{x}_{C_k})- x_{D_k}] \right. \notag \\ & & \hspace{8cm} \left. +\sum_{k=m_{ij}+1}^N [g_{C_k}(\bm{x}_{C_k})- x_{D_k}]\right\} \notag \\
                        & = &\min_{x_s:s=D_2,\ldots,D_{m_{ij}}} \min_{x_s: s\in (\Vset \sm \Vset')_2} \left\{g_{\{i,D_2\}}(x_i,x_{D_2}) + \sum_{k=2}^{m_{ij}} [g_{\{D_{k},D_{k+1}\}}(x_{D_k},x_{D_{k+1}})- x_{D_k}] \right. \label{eq:lem2l2}\\ & & \hspace{6.2cm} \left. +\sum_{k=m_{ij}+1}^{N-1} [g_{\{D_{k},D_{k+1}\}}(x_{D_k},x_{D_{k+1}})- x_{D_k}]\right\} \notag \\
                         &= &\min_{x_s:s=D_2,\ldots,D_{m_{ij}}} g_{\{i,D_2\}}(x_i,x_{D_2}) + \sum_{k=2}^{m_{ij}} [g_{\{D_{k},D_{k+1}\}}(x_{D_k},x_{D_{k+1}})- x_{D_k}]. \notag
    \end{IEEEeqnarray}
If $j$ is not a separator, the same ideas hold, but the notation is more involved. In particular, the middle line~\eqref{eq:lem2l2} becomes
\begin{IEEEeqnarray*}{r}
\min_{x_s:s=D_2,\ldots,D_{m_{ij}}} \min_{x_s: s\in (\Vset \sm \Vset')_2} \left\{g_{\{i,D_2\}}(x_i,x_{D_2}) + \sum_{k=2}^{m_{ij}-1} [g_{\{D_{k},D_{k+1}\}}(x_{D_k},x_{D_{k+1}})- x_{D_k}] \right. \\\left. + g_{\{D_{m_{ij}},j,D_{m_{ij}+1}\}}(x_{D_{m_{ij}}},x_j,x_{D_{m_{ij}+1}}) - x_{D_{m_{ij}}}+\sum_{k=m_{ij}+1}^{N-1} [g_{\{D_{k},D_{k+1}\}}(x_{D_k},x_{D_{k+1}})- x_{D_k}]\right\},
\end{IEEEeqnarray*}
which upon taking the inner minimum gives
\begin{align*}
    \min_{x_s:s=D_2,\ldots,D_{m_{ij}}} g_{{i,D_2}}(x_i,x_{D_2}) + \sum_{k=2}^{m_{ij}-1} [g_{\{D_{k},D_{k+1}\}}(x_{D_k},x_{D_{k+1}})- x_{D_k}] + g_{\{D_{m_{ij}},j\}}(x_{D_{m_{ij}}},x_j)- x_{D_{m_{ij}}}.
\end{align*}
This completes the proof.
\end{proof}


\begin{lemma}
  \label{lem:graph_marginalization}
  Let $\Vset=\{i_1,\ldots,i_m\}$ and $\Eset =\{(i_1,i_2),(i_2,i_3),\ldots,(i_{m-1},i_m)\}$ so that $\GG = (\Vset,\Eset)$ is a chain geometric extremal graphical model. Then marginalizing over any index $i_k \in \Vset$ also leads to a chain geometric extremal graphical model $\GG_{-k} = (\Vset_{-k},\Eset_{-k})$ with $\Vset_{-k} = \{i_1,\ldots,i_{k-1},i_{k+1},\ldots,i_m\}$ and $\Eset_{-k}=\{(i_1,i_2),\ldots,(i_{k-1},i_{k+1}),\ldots,(i_{m-1},i_m)\}$.
\end{lemma}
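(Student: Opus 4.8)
The plan is to apply the marginalization identity~\eqref{eq:min_gauge} directly to the chain gauge~\eqref{eq:cegm}, keep track of which summands depend on the variable being integrated out, and treat separately the case where $i_k$ is an interior node of the chain ($1<k<m$) and the case where it is an endpoint ($k\in\{1,m\}$). Writing out~\eqref{eq:cegm} for the graph $\GG$, the identity~\eqref{eq:min_gauge} gives
\[
  g_{\Vset_{-k}}(\bm x_{\Vset_{-k}}) \;=\; \min_{x_{i_k}\in\mathcal S}\left\{\sum_{l=1}^{m-1} g_{\{i_l,i_{l+1}\}}(x_{i_l},x_{i_{l+1}}) \;-\; \sum_{l=2}^{m-1}|x_{i_l}|\right\},
\]
and the first step is simply to isolate the terms involving $x_{i_k}$.

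When $i_k$ is interior, exactly three summands contain $x_{i_k}$, namely $g_{\{i_{k-1},i_k\}}$, $g_{\{i_k,i_{k+1}\}}$ and $-|x_{i_k}|$; pulling everything else out of the minimum yields
\[
  g_{\Vset_{-k}}(\bm x_{\Vset_{-k}}) \;=\; \sum_{\substack{l=1\\ l\ne k-1,\,k}}^{m-1} g_{\{i_l,i_{l+1}\}}(x_{i_l},x_{i_{l+1}}) \;-\; \sum_{\substack{l=2\\ l\ne k}}^{m-1}|x_{i_l}| \;+\; \widetilde g_{\{i_{k-1},i_{k+1}\}}(x_{i_{k-1}},x_{i_{k+1}}),
\]
where $\widetilde g_{\{i_{k-1},i_{k+1}\}}(u,v):=\min_{x_{i_k}\in\mathcal S}\bigl[g_{\{i_{k-1},i_k\}}(u,x_{i_k})+g_{\{i_k,i_{k+1}\}}(x_{i_k},v)-|x_{i_k}|\bigr]$. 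I would then observe that the surviving edge gauges together with the merged edge $\widetilde g_{\{i_{k-1},i_{k+1}\}}$ are indexed exactly by $\Eset_{-k}$, and that the index set $\{2,\dots,m-1\}\setminus\{k\}$ of the remaining $-|x_{i_l}|$ terms is precisely the set of degree-two (interior) vertices of $(\Vset_{-k},\Eset_{-k})$ — in particular $i_{k-1}$, resp.\ $i_{k+1}$, keeps its correction term iff it stays interior in $\Eset_{-k}$, i.e.\ iff $k-1\ge2$, resp.\ $k+1\le m-1$. Hence the displayed expression has exactly the form~\eqref{eq:cegm} for the graph $(\Vset_{-k},\Eset_{-k})$. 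For an endpoint, say $k=1$ (the case $k=m$ being symmetric), the only summand containing $x_{i_1}$ is $g_{\{i_1,i_2\}}(x_{i_1},x_{i_2})$, and $\min_{x_{i_1}\in\mathcal S}g_{\{i_1,i_2\}}(x_{i_1},x_{i_2})=g_{\{i_2\}}(x_{i_2})=|x_{i_2}|$ by~\eqref{eq:min_gauge} and the marginal constraint $g_{\{j\}}(x_j)=|x_j|$; the resulting $+|x_{i_2}|$ cancels the $-|x_{i_2}|$ correction, which is exactly what is needed since $i_2$ is promoted from interior to endpoint when $i_1$ is deleted, leaving $\sum_{l=2}^{m-1}g_{\{i_l,i_{l+1}\}}-\sum_{l=3}^{m-1}|x_{i_l}|$, again of the form~\eqref{eq:cegm} on $(\Vset_{-1},\Eset_{-1})$ (the degenerate case $m=2$ being immediate).

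The main obstacle is to confirm that $\widetilde g_{\{i_{k-1},i_{k+1}\}}$ is a legitimate bivariate gauge function, so that the output is genuinely a chain geometric extremal graphical model; equivalently, it is the bivariate marginal via~\eqref{eq:min_gauge} of the three-vertex chain gauge $g_{\{i_{k-1},i_k\}}+g_{\{i_k,i_{k+1}\}}-|x_{i_k}|$. I would verify the needed properties directly: $1$-homogeneity follows by the substitution $x_{i_k}\mapsto\lambda x_{i_k}$ together with $1$-homogeneity of the two edge gauges; the bound $\widetilde g_{\{i_{k-1},i_{k+1}\}}(u,v)\ge\max(|u|,|v|)$ follows from the general gauge bounds $g_{\{i_{k-1},i_k\}}(u,x_{i_k})\ge\max(|u|,|x_{i_k}|)$ and $g_{\{i_k,i_{k+1}\}}(x_{i_k},v)\ge\max(|x_{i_k}|,|v|)$, which make the bracket at least $|u|$ and at least $|v|$ for every $x_{i_k}$; and continuity, with attainment of the minimum, follows because the same two inequalities force the bracket to be at least $|x_{i_k}|$, so the minimization may be restricted to the compact set $\{\,|x_{i_k}|\le g_{\{i_{k-1},i_k\}}(u,0)+g_{\{i_k,i_{k+1}\}}(0,v)\,\}$, on which the integrand is jointly continuous, whence a routine argument gives joint continuity of the minimum in $(u,v)$. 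With these properties established, the two displays above exhibit $g_{\Vset_{-k}}$ in the form~\eqref{eq:cegm} for $(\Vset_{-k},\Eset_{-k})$, which is the assertion.
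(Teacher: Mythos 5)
Your proof is correct and follows essentially the same route as the paper's: isolate the summands of the chain gauge~\eqref{eq:cegm} that involve $x_{i_k}$, minimize them out via~\eqref{eq:min_gauge}, and observe that the surviving sum is again of the form~\eqref{eq:cegm} on $(\Vset_{-k},\Eset_{-k})$, treating interior and endpoint $k$ separately. The one place you go further than the paper is in explicitly verifying (via $1$-homogeneity, the lower bound $\ge\max(|u|,|v|)$, and compactness of the effective minimization set) that the merged edge $\widetilde g_{\{i_{k-1},i_{k+1}\}}$ is itself a legitimate bivariate gauge; the paper simply labels it $g_{\{i_{k-1},i_{k+1}\}}$ and implicitly relies on~\eqref{eq:min_gauge}, so your added check is harmless and arguably tidier.
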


\begin{proof}[Proof of Lemma~\ref{lem:graph_marginalization}]
  We have
  \begin{align*}
    g_{\Vset_{-k}}(\bm{x}_{\Vset_{-k}})&= \min_{x_k} \left\{\sum_{l=1}^{m-1} g_{\{i_l,i_{l+1}\}}(x_{i_l},x_{i_{l+1}}) - \sum_{l=2}^{m-1}x_{i_l}\right\}\\
  \end{align*}
  If $k=1$, then the only term containing $i_k$ is $g_{i_1,i_2}(x_{i_1},x_{i_2})$. Minimizing this gives $x_{i_2}$ and so 
  \begin{align*}
    g_{\Vset_{-1}}(\bm{x}_{\Vset_{-1}})&= \sum_{l=2}^{m-1} g_{\{i_l,i_{l+1}\}}(x_{i_l},x_{i_{l+1}}) - \sum_{l=3}^{m-1}x_{i_l}.
  \end{align*}
  Similarly, if $k=m$ then 
  \begin{align*}
    g_{\Vset_{-m}}(\bm{x}_{\Vset_{-m}})&= \sum_{l=1}^{m-2} g_{\{i_l,i_{l+1}\}}(x_{i_l},x_{i_{l+1}}) - \sum_{l=2}^{m-2}x_{i_l}.
  \end{align*}
  Otherwise, the terms containing $x_k$ are $g_{\{i_{k-1},i_{k}\}}(x_{i_{k-1}},x_{i_k}) + g_{\{i_{k},i_{k+1}\}}(x_{i_{k}},x_{i_{k+1}}) -x_{i_k}$, which when minimized gives $g_{\{i_{k-1},i_{k+1}\}}(x_{k-1},x_{k+1})$, so 
  \begin{align*}
    g_{\Vset_{-k}}(\bm{x}_{\Vset_{-k}})&= \sum_{(a,b) \in \Eset_{-k}} g_{\{a,b\}}(x_{a},x_{b}) - \sum_{l=2,l\neq k}^{m-1}x_{i_l}.
  \end{align*}
  Each of these has the form of the chain geometric extremal graphical model $\GG_{-k}$.
\end{proof}

\subsection{Proof of Proposition~\ref{prop:block_graph_betas}}
\label{sec:beta_recurrence_proof}
The proof of Proposition~\ref{prop:block_graph_betas} is split into
two parts. The crux of the proof is in the technical
Lemma~\ref{lem:two-edge-prop} in
Section~\ref{sec:beta_induction_step}, where we show that our
objective function is regularly varying with the claimed index. Given
this technical lemma, the remainder of the proof proceeds by
induction, showing that the regular variation assumptions made on the
edges in Assumption~\ref{ass:edge-RV}, lead to regular variation of
pairwise gauges not directly separated by an edge.

\color{black} We prove Proposition~\ref{prop:block_graph_betas} by
induction on the shortest path length $m_{ij}$ between $i$ and $j$. By
path reduction and marginalization (Lemmas~\ref{lem:blockgraphtopath}
and \ref{lem:graph_marginalization}), we work on the unique shortest
path $i=v_0,\dots,v_{m_{ij}}=j$, and by
Proposition~\ref{prop:block_graph_alphas} we have
$\alpha_{v_t\mid v_0}=\alpha_{v_{t-1}\mid v_0}\alpha_{v_t\mid v_{t-1}}$. 

\begin{proof}[Proof of Proposition~\ref{prop:block_graph_betas}]
  Fix $i,j$ and let $i=v_0,\ldots,v_m=j$ be their unique shortest path,
  where $m=m_{ij}$. For $t=1,\ldots,m$ set
  $f_t(x)=g_{\{v_0,v_t\}}(1,\alpha_{v_t\mid v_0}+x)-1$. Define
  $\sigma^{(1)}=1/(1-\beta_{v_1\mid v_0})$. For each $t=1,\ldots,m-1$ define
  $\sigma^{(t+1)}=1/(1-\beta_{v_{t+1}\mid v_0})$, where
  $\beta_{v_{t+1}\mid v_0}$ is the value given by the recurrence
  \eqref{eq:beta_recurrence} applied to $\beta_{v_t\mid v_0}$ and
  $\beta_{v_{t+1}\mid v_t}$ with parameters $\alpha_{v_t\mid v_0}$ and
  $\alpha_{v_{t+1}\mid v_t}$, using
  $\alpha_{v_{t+1}\mid v_0}=\alpha_{v_t\mid v_0}\alpha_{v_{t+1}\mid v_t}$.

  We prove by induction on $t$ that
  $f_t\in \RV^{0^+}_{\,\sigma^{(t)}}$. Since
  $\{v_0,v_1\}\in \Eset$, Assumption~\ref{ass:edge-RV} gives
  $f_1\in \RV^{0^+}_{\,\sigma^{(1)}}$. Assume
  $f_t\in \RV^{0^+}_{\,\sigma^{(t)}}$ for some $t<m$. Set
  $(i,\pi,j)=(v_0,v_t,v_{t+1})$ and define
  \[
    A_t^+(y)=g_{\{i,\pi\}}(1,\alpha_{\pi\mid i}+y)-1=f_t(y), \qquad
    B_t^+(z)=g_{\{\pi,j\}}(1,\alpha_{j\mid \pi}+z)-1,
  \]
  and
  \[
    f_{t+1}(x)=g_{\{i,j\}}(1,\alpha_{j\mid i}+x)-1.
  \]
  By the induction hypothesis
  $A_t^+\in \RV^{0^+}_{\,\sigma^{(t)}}$. By
  Assumption~\ref{ass:edge-RV}
  $B_t^+\in \RV^{0^+}_{\,1/(1-\beta_{v_{t+1}\mid v_t})}$. By
  Proposition~\ref{prop:block_graph_alphas} we have
  $\alpha_{j\mid i}=\alpha_{\pi\mid i}\alpha_{j\mid \pi}$. Applying Lemma \ref{lem:two-edge-prop}
  to the triple $(i,\pi,j)$ with $A^+=A_t^+$, $B^+=B_t^+$ and
  $f=f_{t+1}$ yields
  \[
    f_{t+1}\in \RV^{0^+}_{\,\tilde\sigma},
  \]
  where $\tilde\sigma$ is the index prescribed by the recurrence
  \eqref{eq:beta_recurrence} from $\sigma^{(t)}$ and
  $1/(1-\beta_{v_{t+1}\mid v_t})$ with parameters $\alpha_{\pi\mid i}$ and
  $\alpha_{j\mid \pi}$. By the definition above we have
  $\tilde\sigma=\sigma^{(t+1)}$. This closes the induction.
\end{proof}

\begin{remark}
  \color{black} The chain-min composition (cf. \eqref{eq:chain_gauge}
  and \eqref{eq:min_trivariate_gauge}) that defines
  $g_{\{v_0,v_{t+1}\}}$ from $g_{\{v_0,v_t\}}$ and
  $g_{\{v_t,v_{t+1}\}}$ preserves the basic gauge structure. It is
  $1$-homogeneous, satisfies the bound $g_{\{v_0,v_{t+1}\}}\ge \max$,
  and is continuous (by Berge’s maximum
  theorem). 
  At each step we apply
  Lemma~\ref{lem:two-edge-prop} to $(i,\pi,j)=(v_0,v_t,v_{t+1})$ with
  $A^+=A_t^+=f_t$, $B^+=B_t^+$ (from the edge $g_{\{v_t,v_{t+1}\}}$),
  and $f=f_{t+1}$; Assumption~\ref{ass:edge-RV} is invoked only for
  that edge and for the chain–min identity.  \color{black}
\end{remark}



\subsection{Induction step in \texorpdfstring{$\beta$}{b}-recurrence}
\label{sec:beta_induction_step}
Fix distinct nodes $(i,\pi,j)$ and define, for
$y\geq-\alpha_{\pi\mid i}$ and $z\geq-\alpha_{j\mid \pi}$,
\[
  A^+(y):=g_{\{i,\pi\}}(1,\alpha_{\pi\mid i}+y)-1\quad \text{and}\quad
  B^+(z):=g_{\{\pi,j\}}(1,\alpha_{j\mid \pi}+z)-1,
\]
and, for $x\ge 0$, $f(x):=g_{\{i,j\}}(1,\alpha_{j\mid i}+x)-1$.  Recall from
Proposition~\ref{prop:block_graph_alphas} that
$\alpha_{j\mid i}= \alpha_{\pi\mid i} \alpha_{j\mid \pi}$.

Set $\varepsilon_0(x):=x/\alpha_{j\mid \pi}$ for $\alpha_{j\mid \pi} > 0$,
\[
  \ell(\varepsilon,x):=\frac{x-\alpha_{j\mid \pi}\varepsilon}{\alpha_{\pi\mid i}+\varepsilon},
  \qquad \text{objective}(\varepsilon ,x) = A^+(\varepsilon)+(\alpha_{\pi\mid i}+\varepsilon) B^+\big(\ell(\varepsilon,x)\big),
\]
and note $f(x)=\min_{\varepsilon\geq-\alpha_{\pi\mid i}}\{\text{objective}(\varepsilon, x)\}$.
  \color{black}
\begin{lemma}[Induction step for $\beta$-recurrence]\label{lem:two-edge-prop} Suppose 
  $A^+(y) \in \RV^{0^+}_{\sigma_a}$ and
  $B^+(z) \in \RV^{0^+}_{\sigma_b}$, with
  $\sigma_a=1/(1-\beta_{\pi\mid i})$,
  $\sigma_b=1/(1-\beta_{j\mid \pi})$, and
  $\alpha_{j\mid i}=\alpha_{\pi\mid i}\alpha_{j\mid \pi}$. Define
  $f(x):=g_{\{i,j\}}(1,\alpha_{j\mid i}+x)-1$, $x\ge
  0$. 
  Then $f\in \RV^{0^+}_{\sigma_{j\mid i}}$ with
  $\sigma_{j\mid i}=1/(1-\beta_{j\mid i})$, where $\beta_{j\mid i}$ is given by the
  recurrence \eqref{eq:beta_recurrence}.
\end{lemma}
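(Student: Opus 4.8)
The plan is to analyse $f$ via the $\min$-representation recorded just before the lemma,
\[
  f(x)=\min_{\varepsilon\ge -\alpha_{\pi\mid i}}\Big\{A^+(\varepsilon)+(\alpha_{\pi\mid i}+\varepsilon)\,B^+\big(\ell(\varepsilon,x)\big)\Big\},
  \qquad \ell(\varepsilon,x)=\frac{x-\alpha_{j\mid\pi}\varepsilon}{\alpha_{\pi\mid i}+\varepsilon},
\]
which follows from inserting the chain identity \eqref{eq:chain_gauge} into \eqref{eq:min_trivariate_gauge} and using $1$-homogeneity of $g_{\{\pi,j\}}$; recall $A^+,B^+\ge 0$, $A^+(0)=B^+(0)=0$ because $\alpha_{\pi\mid i},\alpha_{j\mid\pi}$ are the rightmost contact points, and $A^+,B^+$ are eventually strictly increasing on $(0,\infty)$ near the origin. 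The strategy is: (i) show the minimiser $\varepsilon^\star(x)\downarrow 0$ as $x\downarrow 0$, and in fact that both $\varepsilon^\star(x)$ and $\ell(\varepsilon^\star(x),x)$ are of order at most $x$, so Assumption~\ref{ass:edge-RV} applies to $A^+$ and $B^+$ at the relevant scales; (ii) perform a scaling substitution $\varepsilon=x^{a}\eta$ with a case-dependent exponent $a$ and a bounded range of $\eta$, so that the objective becomes $x^{\sigma_{j\mid i}}$ times a quantity bounded above and below; (iii) apply the uniform convergence theorem for regularly varying functions to pass the slowly varying parts through the $\min$, concluding $f(x)\sim x^{\sigma_{j\mid i}}\cdot(\text{slowly varying})$, hence $f\in\RV^{0^+}_{\sigma_{j\mid i}}$. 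The four cases of \eqref{eq:beta_recurrence} differ only in the choice of $a$ and in which benchmark governs the asymptotics.

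For the localisation I would first rule out $\varepsilon$ near the left endpoint $-\alpha_{\pi\mid i}$ and $\varepsilon$ bounded away from $0$: the benchmark $\varepsilon=0$ gives objective $\alpha_{\pi\mid i}B^+(x/\alpha_{\pi\mid i})\downarrow 0$ (when $\alpha_{\pi\mid i}>0$) and the benchmark $\varepsilon=\varepsilon_0(x)=x/\alpha_{j\mid\pi}$ gives $A^+(x/\alpha_{j\mid\pi})\downarrow 0$ (when $\alpha_{j\mid\pi}>0$), while $A^+(\varepsilon)$ is bounded below for $\varepsilon$ bounded positive and $(\alpha_{\pi\mid i}+\varepsilon)B^+(\ell(\varepsilon,x))\ge x-\varepsilon-o(1)$ near $\varepsilon=-\alpha_{\pi\mid i}$ (there $\ell\to\infty$ and $B^+$ is at least linear). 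A short comparison, using monotonicity of $A^+$ and $B^+$, then confines $\varepsilon^\star(x)$ to the right neighbourhood of $0$ at scale $O(x)$. In the cases with at least one of $\alpha_{\pi\mid i},\alpha_{j\mid\pi}$ positive, one further checks that the interior gain from moving off the governing benchmark is of lower order, so that $f$ is asymptotic to a power $x^{\max(\sigma_a,\sigma_b)}$, $x^{\sigma_a}$, or $x^{\sigma_b}$ times a slowly varying function — equal to the dominating benchmark when the two edge indices differ, and (via the $\varepsilon=x\eta$ substitution over a compact $\eta$-range) a homogeneous combination of the two edge slowly varying functions when they coincide. Since a pointwise minimum of regularly varying functions of indices $\sigma_a,\sigma_b$ is regularly varying of index $\max(\sigma_a,\sigma_b)$, this yields the first three lines of \eqref{eq:beta_recurrence}.

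The substantive case is $\alpha_{\pi\mid i}=\alpha_{j\mid\pi}=0$, where $\ell(\varepsilon,x)=x/\varepsilon$ and $f(x)=\min_{\varepsilon>0}\{A^+(\varepsilon)+\varepsilon B^+(x/\varepsilon)\}$. Here $\varepsilon\mapsto A^+(\varepsilon)$ is increasing and $\varepsilon\mapsto\varepsilon B^+(x/\varepsilon)$ decreasing near the minimiser, so $\varepsilon^\star(x)$ is essentially the balance point $A^+(\varepsilon^\star)\asymp\varepsilon^\star B^+(x/\varepsilon^\star)$; the substitution $\varepsilon=x^{a}\eta$ with $a=\sigma_b/(\sigma_a+\sigma_b-1)$ makes the two summands carry the common power $x^{a\sigma_a}=x^{\sigma_a\sigma_b/(\sigma_a+\sigma_b-1)}$, and the uniform convergence theorem lets one factor this out, leaving a slowly varying factor. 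The algebraic identity $\sigma_a+\sigma_b-1=(1-\beta_{\pi\mid i}\beta_{j\mid\pi})/[(1-\beta_{\pi\mid i})(1-\beta_{j\mid\pi})]$ then gives $\sigma_a\sigma_b/(\sigma_a+\sigma_b-1)=1/(1-\beta_{\pi\mid i}\beta_{j\mid\pi})=\sigma_{j\mid i}$, which is the last line of \eqref{eq:beta_recurrence}.

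The main obstacle is the real-analytic bookkeeping behind the assertion that $f$ is \emph{regularly varying} rather than merely sandwiched between two regularly varying functions — the latter does not imply the former. This forces one either to establish an exact equivalence $f(x)\sim h(x)$ with an explicit $\RV^{0^+}_{\sigma_{j\mid i}}$ function $h$, or to push the uniform convergence theorem through the $\min$, which needs the minimiser located at the right scale and the slowly varying factors $L_a(x^{a}\eta)$, $L_b(x^{1-a}/\eta)$ controlled uniformly over $\eta$ in a compact set (with Potter bounds if the ratio $L_a/L_b$ is unbounded). Continuity of $f$ (Berge's maximum theorem applied to the $\min$-representation) and eventual monotonicity of $f$ near $0$ are used to make these limiting arguments rigorous. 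A secondary nuisance, in the cases $\alpha_{\pi\mid i}=0$ where the prefactor $\alpha_{\pi\mid i}+\varepsilon$ degenerates, is that one cannot bound this prefactor below by a positive constant and must retain its $\varepsilon$-dependence throughout the lower-bound estimate.
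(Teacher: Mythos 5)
Your high-level plan — pin down a case-dependent scale for the minimiser, substitute $\varepsilon = x^a\eta$, and pass the slowly varying factors through the minimum via the uniform convergence theorem and Potter bounds — is the right one, and matches the backbone of the paper's argument. You correctly identify the balanced exponent $a=\sigma_b/(\sigma_a+\sigma_b-1)$ for the doubly degenerate case $\alpha_{\pi\mid i}=\alpha_{j\mid\pi}=0$ and the algebra $\sigma_a\sigma_b/(\sigma_a+\sigma_b-1)=1/(1-\beta_{\pi\mid i}\beta_{j\mid\pi})$, and you rightly single out the central obstacle, that a two-sided sandwich by regularly varying bounds does not by itself make $f$ regularly varying.

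There is, however, a genuine gap. The localize-substitute-factor strategy works as you describe only in the interior regimes: $\sigma_a\ne\sigma_b$ when both $\alpha$'s are positive, $\sigma_a>1$ or $\sigma_b>1$ as appropriate when exactly one vanishes, and $\sigma_a,\sigma_b>1$ when both vanish. At the boundaries — the tie $\sigma_a=\sigma_b$ in Case A, $\sigma_b=1$ in Case B, $\sigma_a=1$ in Case C, and $\min(\sigma_a,\sigma_b)=1$ in Case D — the localization degenerates: either the Potter exponent used to exclude the far strip becomes nonpositive, or (in the tie case) the minimiser has no stable compact band in the $\eta$-variable and your proposed "homogeneous combination" $M(L_a(x),cL_b(x))$ is not obviously slowly varying, since $M$ vanishes on the boundary of the positive quadrant and one must control $L_a/L_b$ in all regimes, including oscillatory ones. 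The paper handles all four boundary situations by an entirely different device, a scaling sandwich: one evaluates the objective for $f(\lambda x)$ at the rescaled trial point $\lambda\varepsilon^\star(x)$, uses Lemma~\ref{lem:UCT} at the fixed multiplier $\lambda$ to deduce $f(\lambda x)\le(1+\zeta)\lambda^{\sigma}f(x)$, and inverts roles to get the matching lower bound, concluding $f(\lambda x)/f(x)\to\lambda^{\sigma}$ directly without ever pinning $\varepsilon^\star(x)$ to a band. Without this device, or an equivalent, your proposal cannot be closed. A secondary slip: your opening assertion that $\varepsilon^\star(x)$ and $\ell(\varepsilon^\star(x),x)$ are of order at most $x$ is false in Case D, where $\varepsilon^\star(x)\asymp x^{\gamma}$ and the remaining argument is $\asymp x^{1-\gamma}$ with $\gamma<1$; your own subsequent use of $a=\gamma\ne 1$ contradicts it.
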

\color{black}  
\begin{remark}
  For each $x>0$ we minimize the chain objective
  $\varepsilon\mapsto A^+(\varepsilon)+(\alpha_{\pi\mid i}+\varepsilon)\,B^+(\ell(\varepsilon,x))$. At the boundary
  $\varepsilon=-\alpha_{\pi\mid i}$, or when taking limits toward it, we avoid the
  quotient form $\ell(\varepsilon,x)$ and work directly with the gauge via
  $(\alpha_{\pi\mid i}+\varepsilon)\,B^+(\ell(\varepsilon,x)) = g_{\{\pi,j\}}(\alpha_{\pi\mid i}+\varepsilon,\ \alpha_{\pi\mid i}\alpha_{j\mid
    \pi}+x)- (\alpha_{\pi\mid i}+\varepsilon)$, so the objective extends continuously to the
  boundary and all comparisons there are made at the gauge
  level. \color{black} The function $A^+(\varepsilon)$ is minimized at
  $\varepsilon=0$, while $B^+(\ell(\varepsilon,x))$ is minimized by
  $\varepsilon_0(x)$.  We call the A-branch the choice
  $\varepsilon=\varepsilon_0(x)$ with cost $A^+(\varepsilon_0(x))$, the B-branch the choice
  $\varepsilon=0$ with cost
  $\alpha_{\pi\mid i} B^+(x/\alpha_{\pi\mid i})$.  By dominant branch we mean the one with
  asymptotically smaller cost as $x\to 0^+$; equivalently, if
  $R(x):=A^+(\varepsilon_0(x))/\big(\alpha_{\pi\mid i} B^+(x/\alpha_{\pi\mid i})\big)$, then
  $A$ dominates if $R(x)\to 0$, $B$ dominates if $R(x)\to \infty$, and the tie
  case is $R(x)\to c\in(0,\infty)$.\color{black}

\color{black}The common structure that is adopted in the proof of
Lemma~\ref{lem:two-edge-prop} is the following\color{black}
\begin{enumerate}
  
\item Localize $\varepsilon$ to a small interval where both arguments of
  $A^+$ and $B^+$ are in a fixed compact multiplier range. Far tails
  are excluded by fixed positive constants coming from continuity, the
  lower bound $g_{a,b}(x,y)\ge\max(x,y)$, and $1$–homogeneity, and, when
  needed, by Lemma~\ref{lem:Potter0} (Potter bounds) to rule out
  intermediate regions where the the multiplier
  $\varepsilon/\varepsilon_0(x)$ could drift.
  
\item On the localized interval, by Lemma~\ref{lem:UCT}, we obtain
  uniform two‑sided ratio bounds for $A^+$ and $B^+$ on fixed compact
  multiplier ranges.
  
\item Evaluate the objective at the band centre, where one function
  vanishes in Cases A–C, bound the other function uniformly on the
  band, and conclude that $f(x)$ is asymptotically equivalent to the
  dominant branch or, when only an index is claimed, that $f$ has the
  desired regular variation
  index.  
  
\end{enumerate}

What differs across cases is the location and nature of the centre. In
Case A ($\alpha_{\pi\mid i}>0$, $\alpha_{j\mid \pi}>0$) and Case C
($\alpha_{\pi\mid i}=0$, $\alpha_{j\mid \pi}>0$) there is a moving centre
$\varepsilon_0(x)=x/\alpha_{j\mid \pi}$ that kills the $B^+\circ\ell$ function. We localize to a
band around $\varepsilon_0(x)$, use Lemma~\ref{lem:UCT} on
$[1-\eta,1+\eta]$ for $A^+$, and invoke Potter only to exclude the
intermediate right region $\varepsilon\ge(1+\eta)\varepsilon_0(x)$.

In Case B ($\alpha_{\pi\mid i}>0$, $\alpha_{j\mid \pi}=0$) the centre is fixed at
$\varepsilon=0$, which kills the $A^+$ function, so all multipliers remain in a
fixed compact around $1$ and Lemma~\ref{lem:UCT} suffices. In Case C
with $\sigma_a>1$ a crude $x$–free bound excludes the entire left
block. With $\sigma_a=1$ we do not force a strict comparison and prove only
the index $f\in\RV^{0^+}_1$ via a scaling sandwich
$f(\lambda x)\sim \lambda f(x)$, as $x\to0^+$.


\color{black} Case D ($\alpha_{\pi\mid i}=\alpha_{j\mid \pi}=0$) is different, especially
when $\sigma_a,\sigma_b>1$, as here there is no useful centre. Although
$A^+(0)=0$, at $\varepsilon=0$ the second term is
$g_{\{\pi,j\}}(0,x)\ge x$, and for $0\le \varepsilon\le x/2$ one has
$\varepsilon\,B^+(x/\varepsilon)\ge x/2$. This means the boundary is suboptimal, because a
different choice for $\varepsilon$ gives regular variation at $0^+$ with index
greater than one. Instead we choose the balanced scale
$\varepsilon=x^\gamma u$ with $\gamma=\sigma_b/(\sigma_a+\sigma_b-1)$, which makes the two contributions
comparable. Lemma~\ref{lem:Potter0} localizes $u$ to a fixed window,
and on this window Lemma~\ref{lem:UCT} ``flattens'' the slowly varying
factors, reducing the problem to minimizing a simple proxy in
$u$. This gives $f(x)=x^\rho L(x)\,(1+o(1))$ as $x\to0^+$, with $L$ slowly
varying at $0^+$ and $\rho=\sigma_a\sigma_b/(\sigma_a+\sigma_b-1) > 1$. When
$\min\{\sigma_a, \sigma_b\}=1$, the proxy loses strict curvature and the
boundary $\varepsilon=0$ can be competitive, so we treat those cases separately.
\color{black}

\end{remark}

\begin{proof}[Proof of Lemma \ref{lem:two-edge-prop}]

  

  \noindent \textbf{Case A}: $\alpha_{\pi\mid i}>0$ and
  $\alpha_{j\mid \pi}>0$.
  Evaluating the objective function at $\varepsilon=0$ and
  $\varepsilon=\varepsilon_0(x)$ gives the bounds
  \begin{equation}
    f(x)\le \alpha_{\pi\mid i}B^+\Big(\frac{x}{\alpha_{\pi\mid i}}\Big)\quad \text{and} \quad
    f(x)\le A^+\Big(\frac{x}{\alpha_{j\mid \pi}}\Big).
    \label{eq:f_upper_bounds}
  \end{equation}
  Hence,
  \begin{equation}
    f(x)\leq M(x),\quad \text{where} \quad M(x) =
    \min\left\{A^+\left(\frac{x}{\alpha_{j\mid \pi}}\right), \alpha_{\pi\mid
        i}B^+\left(\frac{x}{\alpha_{\pi\mid i}}\right)\right\}.
    \label{eq:global_upper_bound}
  \end{equation}
 
  Fix any $\varepsilon'\in (0,\alpha_{\pi\mid i}/2)$. Since $A^+$ is continuous and
  $0$ is the rightmost $\varepsilon$-minimizer of $A^+(\varepsilon)$ with
  $A^+(0)=0$, we have
  $m_{R}:=\inf\{A^+(\varepsilon): \varepsilon\ge \varepsilon'\}>0$. Hence, $A^+(\varepsilon)\geq m_R$ for all $\varepsilon\ge \varepsilon'$.

  By $1$-homogeneity,
  \[
    (\alpha_{\pi\mid i}+\varepsilon) B^+\big(\ell(\varepsilon,x)\big)
    = g_{\{\pi,j\}}(\alpha_{\pi\mid i}+\varepsilon , \alpha_{j\mid \pi}\alpha_{\pi\mid i} + x)-(\alpha_{\pi\mid i}+\varepsilon).
  \]
  From $g_{\{\pi,j\}}(x,y)\ge \max(x,y)$, we have
  \[
    \inf\Big\{\lim_{\varepsilon\rightarrow-\alpha_{\pi\mid i}^+}(\alpha_{\pi\mid i}+\varepsilon) B^+\big(\ell(\varepsilon,x)\big) : x
    \geq 0\Big\} = \inf\{g_{\{\pi,j\}}(0,\alpha_{\pi\mid i} \alpha_{j\mid \pi}+x) : x\geq 0\} \geq
    \alpha_{\pi\mid i} \alpha_{j\mid \pi} > 0.
  \]
  Also,
  $\ell(\varepsilon,x)\ge (\alpha_{j\mid \pi}\varepsilon')/(\alpha_{\pi\mid i}-\varepsilon') > 0$ for any
  $\varepsilon \in (-\alpha_{\pi\mid i}+\zeta, -\varepsilon']$ with
  $\zeta \in (0, \varepsilon' - (\alpha_{\pi\mid i}/2))$, we have
  \[
    m_{L}(\zeta):=\inf\{(\alpha_{\pi\mid i} + \varepsilon) B^+(\ell(\varepsilon,x)) : \varepsilon \in(-\alpha_{\pi\mid i}+\zeta, -\varepsilon'], x\in[0,1]\} > 0.
  \]
  Since the map
  $(\varepsilon, x)\mapsto(\alpha_{\pi\mid i} + \varepsilon)B^+(\ell(\varepsilon, x))$ is continuous on
  $[-\alpha_{\pi\mid i}, \varepsilon']\times [0,1]$,
  $\varepsilon_0(x)$ is the rightmost $\varepsilon$-minimizer of
  $B^+(\ell(\varepsilon,x))$ with $B^+(\ell(\varepsilon_0(x),x))=0$, and
  $g_{\{\pi,j\}}(x,y)\ge \max(x,y)$, we deduce
  \[
    m_{L} := \inf\{(\alpha_{\pi\mid i} + \varepsilon)B^+(\ell(\varepsilon,x)): \varepsilon \in[-\alpha_{\pi\mid i}
    ,-\varepsilon'], x\in[0,1]\}>0.
  \]

  Since $A^+(x/\alpha_{j\mid \pi})\to 0$ and
  $\alpha_{\pi\mid i}B^+(x/\alpha_{\pi\mid i})\to 0$ as $x\to 0^+$, minimizers necessarily lie
  in $(-\varepsilon',\varepsilon')$ for all sufficiently small $x$, where
  $\alpha_{\pi\mid i}+\varepsilon \in(\alpha_{\pi\mid i}/2, \alpha_{\pi\mid i}+\varepsilon')$.

  \noindent $(i)$ $\sigma_a>\sigma_b$.  Fix $\eta\in(0,1)$ and take $x$ small so that
  $(1+\eta)\varepsilon_0(x)<\varepsilon'$. Let
  $\varepsilon\ge (1+\eta)\varepsilon_0(x)$ and fix
  $\delta\in(0,\sigma_a)$ and $c>1$, so that
  $c^{-1}(1+\eta)^{\sigma_a-\delta}>1$. By Lemma~\ref{lem:Potter0} there exists
  $\varepsilon_0(\delta,c)>0$ such that for all $\varepsilon_1,\varepsilon_2\in(0,\varepsilon_0(\delta,c))$,
  \[
    c^{-1}\min\left\{\left(\frac{\varepsilon_1}{\varepsilon_2}\right)^{\sigma_a+\delta},\ \left(\frac{\varepsilon_1}{\varepsilon_2}\right)^{\sigma_a-\delta}\right\}
    \leq \frac{A^+(\varepsilon_1)}{A^+(\varepsilon_2)}\leq
    c\max\left\{\left(\frac{\varepsilon_1}{\varepsilon_2}\right)^{\sigma_a+\delta},\ \left(\frac{\varepsilon_1}{\varepsilon_2}\right)^{\sigma_a-\delta}\right\}.
  \]
  Choose $x$ small so that
  $(1+\eta)\varepsilon_0(x)<\min\{\varepsilon',\varepsilon_0(\delta,c)\}$. Then, because
  $A^+(\varepsilon)=A^+((\varepsilon/\varepsilon_0(x)) \varepsilon_0(x))$ and
  $\varepsilon/\varepsilon_0(x)\ge 1+\eta$ for any $\varepsilon$ with
  $(1+\eta)\varepsilon_0(x)\le \varepsilon<\min\{\varepsilon',\varepsilon_0(\delta,c)\}$, Lemma~\ref{lem:Potter0} gives
  \[
    A^+(\varepsilon)\ge\ c^{-1}\Big(\frac{\varepsilon}{\varepsilon_0(x)}\Big)^{\sigma_a-\delta} A^+(\varepsilon_0(x)) \ \ge\
    c^{-1}(1+\eta)^{\sigma_a-\delta} A^+(\varepsilon_0(x)) > A^+(\varepsilon_0(x)),
  \]
  for all
  $(1+\eta)\varepsilon_0(x)\le \varepsilon<\min\{\varepsilon',\varepsilon_0(\delta,c)\}$. For $\varepsilon\ge \min\{\varepsilon',\varepsilon_0(\delta,c)\}$, set
  \[
    m_{A^+}^*:=\inf\{A^+(\varepsilon) \,:\, \varepsilon\ge \min\{\varepsilon',\varepsilon_0(\delta,c)\}\} > 0.
  \]
  Since $A^+(\varepsilon_0(x))\to 0$, for $x$ small we have $m_{A^+}^*>A^+(\varepsilon_0(x))$. Therefore, for all $\varepsilon\ge (1+\eta)\varepsilon_0(x)$,
  \[
    \text{objective}(\varepsilon,x)\ \ge\ A^+(\varepsilon)\ >\ A^+(\varepsilon_0(x))\ =\ \text{objective}(\varepsilon_0(x),x),
  \]
  so these $\varepsilon$ are suboptimal relative to $\varepsilon_0(x)$.

  \color{black} If
  $-\varepsilon'\le \varepsilon\le (1-\eta)\varepsilon_0(x)$, then
  \begin{equation}
    (0,1)\ni\eta/\{1+(\varepsilon'/\alpha_{\pi\mid i})\}=: \kappa(\eta, \varepsilon') \leq \frac{\ell(\varepsilon,x)}{x/\alpha_{\pi\mid i}} \le
    \lambda(\varepsilon'):=\alpha_{\pi\mid i}/(\alpha_{\pi\mid i}-\varepsilon') \in (1,2).
    \label{eq:ell_kappa_lambda_bounds}
  \end{equation}
  Thus, from Lemma~\ref{lem:UCT} for $F=B^+$ on
  $[\kappa(\eta,\varepsilon'), \lambda(\varepsilon')]$ and the inequality
  $\alpha_{\pi\mid i}+\varepsilon\ge \alpha_{\pi\mid i}/2$ on
  $[-\varepsilon',\varepsilon']$, there exists $x_0 > 0$ and
  $c_{B}(\eta, \varepsilon')>0$ such that for all $0<x<x_0$ and all
  $\varepsilon\in[-\varepsilon',(1-\eta)\varepsilon_0(x)]$,
  \[
    (\alpha_{\pi\mid i}+\varepsilon) B^+\left(\ell(\varepsilon,x)\right) = (\alpha_{\pi\mid i}+\varepsilon)
    B^+\left(\frac{\ell(\varepsilon,x)}{x/\alpha_{\pi\mid i}}x/\alpha_{\pi\mid i}\right) \ge
    c_{B}(\eta, \varepsilon') \alpha_{\pi\mid i} B^+(x/\alpha_{\pi\mid i}),
  \]

  Since $\sigma_a > \sigma_b$, $A^+ \in \RV_{\sigma_a}^{0^+}$ and
  $B^+ \in \RV_{\sigma_b}^{0^+}$, then
  \[
    \lim_{x\to 0^{+}}\frac{\alpha_{\pi\mid i}B^+(x/\alpha_{\pi\mid i})}{A^+(\varepsilon_0(x))} = \infty,
  \]
  so there is $x_1\in(0,x_0]$ such that for all $0<x<x_1$,
  $c_B(\eta,\varepsilon') \alpha_{\pi\mid i} B^+(x/\alpha_{\pi\mid i}) > A^+(\varepsilon_0(x))$.  Therefore, for
  all such $x$ and all $\varepsilon\in(-\varepsilon',(1-\eta)\varepsilon_0(x)]$, 
  \[
    \text{objective}(\varepsilon ,x) \ge c_B(\eta,\varepsilon') \alpha_{\pi\mid i}B^+\Big(\frac{x}{\alpha_{\pi\mid
        i}} \Big) > A^+(\varepsilon_0(x)) =\text{objective}(\varepsilon_0(x) ,x),
  \]
  so these $\varepsilon$ are suboptimal relative to $\varepsilon_0(x)$. Thus, minimizers lie
  in $[(1-\eta)\varepsilon_0(x),(1+\eta)\varepsilon_0(x)]$. 
  On this interval, Lemma~\ref{lem:UCT} for $F=A^+$ on $[1-\eta,1+\eta]$ yields,
  for $x$ small,
  \[
    (1-\omega_A(\eta)) A^+(\varepsilon_0(x))\leq A^+(\varepsilon)\leq (1+\omega_A(\eta)) A^+(\varepsilon_0(x)),
  \]
  with $\omega_A(\eta)\to 0$ as $\eta\to 0$. Evaluating at the minimizer in the
  interval and at $\varepsilon=\varepsilon_0(x)$ (where $B^+(0)=0$),
  \[
    (1-\omega_A(\eta)) A^+\Big(\frac{x}{\alpha_{j\mid \pi}}\Big)\leq f(x)\leq A^+\Big(\frac{x}{\alpha_{j\mid \pi}}\Big).
  \]
  Hence, $f(x)/A^+(x/\alpha_{j\mid \pi})\to 1$. Thus, $f\in\RV^0_{\sigma_a}$ and $\beta_{j\mid i}=\beta_{\pi\mid i}$.

  \noindent $(ii)$ $\sigma_b>\sigma_a$ Fix $\eta\in(0,1)$,
  $\delta\in(0,\sigma_a)$ and $c>1$. By Lemma~\ref{lem:Potter0} there is
  $\varepsilon_0(\delta,c)>0$ such that for all $\varepsilon_1, \varepsilon_2 \in (0, \varepsilon_0(\delta, c))$,
  \[
    c^{-1} \min\left\{\left(\frac{\varepsilon_1}{\varepsilon_2}\right)^{\sigma_a - \delta},
      \left(\frac{\varepsilon_1}{\varepsilon_2}\right)^{\sigma_a + \delta}\right\}\leq
    \frac{A^+(\varepsilon_1)}{A^+(\varepsilon_2)} \leq c
    \max\left\{\left(\frac{\varepsilon_1}{\varepsilon_2}\right)^{\sigma_a - \delta}, \left(\frac{\varepsilon_1}{\varepsilon_2}\right)^{\sigma_a + \delta}\right\}.
  \]

  \color{black} Choose $x$ small so that
  $(1+\eta)\varepsilon_0(x) < \min \{\varepsilon_0(\delta,c), \varepsilon'\}$ and therefore,
  $\varepsilon_0(x) < \min \{\varepsilon_0(\delta,c), \varepsilon'\}$ and
  $\eta \varepsilon_0(x) < \min \{\varepsilon_0(\delta,c), \varepsilon'\}$. Then, because
  $A^+(\varepsilon)=A^+((\varepsilon/\varepsilon_0(x))\varepsilon_0(x))$ where $\varepsilon/\varepsilon_0(x)\ge\eta$ with $\eta\in(0,1)$, we have
  \begin{IEEEeqnarray*}{rCl}
    A^+(\varepsilon) &\ge& c^{-1}\min\left\{\left(\frac{\varepsilon}{\varepsilon_0(x)}\right)^{\sigma_a-\delta},
      \left(\frac{\varepsilon}{\varepsilon_0(x)}\right)^{\sigma_a+\delta}\right\} A^+(\varepsilon_0(x)) \ge
    c^{-1}\,\eta^{\sigma_a+\delta}A^+(\varepsilon_0(x))\\
    &=:& c_A(\eta,\delta,c)A^+(\varepsilon_0(x)),
  \end{IEEEeqnarray*}
  for all $\eta\,\varepsilon_0(x)\le \varepsilon<\min\{\varepsilon',\varepsilon_0(\delta,c)\}$.

  
  Furthermore, because $\min\{\varepsilon',\varepsilon_0(\delta, c)\}>0$ and
  $0$ is the rightmost $\varepsilon$-minimizer of $A^+(\varepsilon)$, it follows that
  $A^+(\varepsilon)\ge m_{A^+}:=\inf\{A^+(\varepsilon)\,:\,\varepsilon\ge \min\{\varepsilon',\varepsilon_0(\delta, c)\}>0$ for all
  $\varepsilon\ge \min\{\varepsilon',\varepsilon_0(\delta, c)\}$. Hence, for all $\varepsilon\ge \eta \varepsilon_0(x)$,
  \[
    \text{objective}(\varepsilon,x)\ \ge A^+(\varepsilon)\geq \min\{c_A(\eta,\delta,c) A^+(\varepsilon_0(x)),\
    m_{A^+}\}.
  \]
  Since $\sigma_a < \sigma_b$, $A^+ \in \RV_{\sigma_a}^{0^+}$ and
  $B^+ \in \RV_{\sigma_b}^{0^+}$, then
  \[
    \lim_{x\to 0^{+}}\frac{\alpha_{\pi\mid i}B^+(x/\alpha_{\pi\mid i})}{A^+(\varepsilon_0(x))} = 0,
  \]
  Therefore, for $x$ small,
  \[
    \min\{c_A(\eta,\delta,c) A^+(\varepsilon_0(x)),\ m_{A^+}\}\ >\ \alpha_{\pi\mid i}B^+\Big(\frac{x}{\alpha_{\pi\mid i}}\Big)\ =\ \text{objective}(0,x),
  \]
  and every $\varepsilon\ge \eta \varepsilon_0(x)$ is strictly suboptimal relative to $\varepsilon=0$.

  If $-\varepsilon'<\varepsilon<0$, then $\alpha_{\pi\mid i}+\varepsilon\ge \alpha_{\pi\mid i}/2$ and
  \[
    1 \leq \frac{\ell(\varepsilon,x)}{x/\alpha_{\pi\mid i}} \leq \lambda(\varepsilon')\in(1,2),
  \]
  where $\lambda(\varepsilon')$ is defined in expression
  \eqref{eq:ell_kappa_lambda_bounds}. Thus, by Lemma~\ref{lem:UCT} for
  $F=B^+$ on $[1,\lambda(\varepsilon')]$, and the inequality
  $\alpha_{\pi\mid i}+\varepsilon\ge \alpha_{\pi\mid i}/2$ on
  $[-\varepsilon',0)$, there exists $x_0 > 0$ and
  $c_{B}(\varepsilon')>0$ such that for all $0<x<x_0$ and all $\varepsilon\in[-\varepsilon',0)$,
  \[
    (\alpha_{\pi\mid i}+\varepsilon) B^+(\ell(\varepsilon,x)) = (\alpha_{\pi\mid i}+\varepsilon)
    B^+\Big(\frac{\ell(\varepsilon,x)}{x/\alpha_{\pi\mid i}}x/\alpha_{\pi\mid i}\Big) \ge 
    c_B(\varepsilon') \alpha_{\pi\mid i} B^+\Big(\frac{x}{\alpha_{\pi\mid i}}\Big),
  \]
  and such $\varepsilon$ are suboptimal relative to $\varepsilon=0$. Therefore, the
  minimizer lies in $[0,\eta \varepsilon_0(x)]$.

  On this interval,
  \[
    \kappa(\eta,\varepsilon') \leq \frac{\ell(\varepsilon,x)}{x/\alpha_{\pi\mid i}} \leq 1,
  \]
  where $\kappa(\eta, \varepsilon')$ is defined in expression
  \eqref{eq:ell_kappa_lambda_bounds} and
  $\alpha_{\pi\mid i}\le \alpha_{\pi\mid i}+\varepsilon\le \alpha_{\pi\mid i}+\eta \varepsilon_0(x)$. Lemma~\ref{lem:UCT} for
  $F=B^+$ on $[\kappa(\eta,\varepsilon'),1]$ then yields, for $x$ small,
  \[
    (1-\omega_B(\eta)) \alpha_{\pi\mid i} B^+\Big(\frac{x}{\alpha_{\pi\mid i}}\Big)\leq f(x)\leq \alpha_{\pi\mid i} B^+\Big(\frac{x}{\alpha_{\pi\mid i}}\Big),
  \]
  with $\omega_B(\eta)\to 0$ as $\eta\to 0$. Hence,
  $f(x)/(\alpha_{\pi\mid i}B^+(x/\alpha_{\pi\mid i}))\to 1$. Thus
  $f\in \RV^{0^+}_{\sigma_b}$ and $\beta_{j\mid i}=\beta_{j\mid \pi}$.

  \noindent $(iii)$ $\sigma_a=\sigma_b=:\sigma$. \color{black} Fix
  $\lambda>0$. Since $f(x)\le A^+(x/\alpha_{j\mid \pi})\to 0^+$ and
  $f(x)\le \alpha_{\pi\mid i}B^+(x/\alpha_{\pi\mid i})\to 0^+$ as
  $x\to 0^+$, any minimizer
  $\varepsilon^\star(x)\in\arg\min_{\varepsilon>-\alpha_{\pi\mid i}}\text{\rm objective}(\varepsilon,x)$ satisfies
  $A^+(\varepsilon^\star(x))\le f(x)\to 0^+$. By continuity of $A^+$ at
  $0$ and $0$ being its rightmost minimizer,
  $\varepsilon^\star(x)\to 0$ as $x\to 0^+$. Consequently
  \[
    t^\star(x):=\ell(\varepsilon^\star(x),x)=\frac{x-\alpha_{j\mid \pi}\,\varepsilon^\star(x)}{\alpha_{\pi\mid i}+\varepsilon^\star(x)}\ \to\
    0, \quad \text{as $x\to 0^+$}.
  \]
  For all $\varepsilon>-\alpha_{\pi\mid i}$ and $x>0$,
  \begin{equation}\label{eq:ell-scale}
    \ell(\lambda\,\varepsilon,\lambda x)=\frac{\lambda\,(x-\alpha_{j\mid \pi}\,\varepsilon)}{\alpha_{\pi\mid i}+\lambda\,\varepsilon}
    =\lambda\,\frac{\alpha_{\pi\mid i}+\varepsilon}{\alpha_{\pi\mid i}+\lambda\,\varepsilon}\,\ell(\varepsilon,x).
  \end{equation}
  At $\varepsilon=\varepsilon^\star(x)$, write
  \[
    \ell(\lambda\varepsilon^\star(x),\lambda x)=\lambda\,\theta(x)t^\star(x),\qquad \theta(x):=\frac{\alpha_{\pi\mid
        i}+\varepsilon^\star(x)}{\alpha_{\pi\mid i}+\lambda \varepsilon^\star(x)}
  \]
  and note that $\theta(x)\to 1$, as $x\to 0^+$.

  
  \color{black} Fix $\zeta\in(0,1)$ and note that
  $\varepsilon^\star(x),t^\star(x)\to 0^+$ as $x\to0^+$. By Lemma~\ref{lem:UCT} on the
  compact multiplier sets \color{black}$\{\lambda\}$ and
  $\{\lambda \theta(x)\}$, both lying in a fixed compact around
  $\lambda$ for $x$
  small, \color{black} 
  there
  exists $x_0=x_0(\zeta)>0$ such that, for all $0<x<x_0$, the bounds below
  hold with the same $\zeta$,
  \begin{equation}\label{eq:UCT-upper-A}
    A^+(\lambda \varepsilon^\star(x))\leq (1+\zeta)^{1/3} \lambda^{\sigma} A^+(\varepsilon^\star(x)),
  \end{equation}
  \begin{equation}\label{eq:UCT-upper-B}
    B^+(\lambda \theta(x) t^\star(x))\leq (1+\zeta)^{1/3} \big(\lambda \theta(x)\big)^{\sigma} B^+(t^\star(x)),
  \end{equation}
  and, since $\varepsilon^\star(x)\to 0$,
  \begin{equation}\label{eq:den-upper}
    \frac{\alpha_{\pi\mid i}+\lambda \varepsilon^\star(x)}{\alpha_{\pi\mid i}+\varepsilon^\star(x)}\leq (1+\zeta)^{1/3},\qquad
    \theta(x)\ \in\ \big[(1+\zeta)^{-1/3},\ (1+\zeta)^{1/3}\big].
  \end{equation}
  After shrinking $\zeta$ to some $\zeta'\in(0,\zeta)$ such that
  $(1+\zeta')^{(1+\sigma)/3}\le 1+\zeta$, and since
  $\theta(x)\in[(1+\zeta')^{-1/3},(1+\zeta')^{1/3}]$, we have
  $(\lambda \theta(x))^{\sigma}\le (1+\zeta')^{\sigma/3}\lambda^{\sigma}$. Combining this with the prefactor
  bound $(1+\zeta')^{1/3}$ from \eqref{eq:UCT-upper-B} yields
\begin{equation}\label{eq:UCT-upper-B-compressed}
  B^+(\lambda \theta(x) t^\star(x))\ \le\ (1+\zeta')^{(1+\sigma)/3}\,\lambda^{\sigma}\,B^+(t^\star(x))\ \le\ (1+\zeta)\,\lambda^{\sigma}\,B^+(t^\star(x)).
\end{equation}
  \color{black} Now let $\varepsilon^\star(\lambda x)$ be a minimizer for
  $f(\lambda x)$ and set
  $s^\star(x):=\ell(\varepsilon^\star(\lambda x),\lambda x) \to 0^+$ as
  $x\to 0^+$.  Using \eqref{eq:ell-scale} we can write
  \[
    \ell(\varepsilon^\star(\lambda x),x)=\frac{s^\star(x)}{\lambda\,\vartheta(x)},\qquad
    \vartheta(x):=\frac{\alpha_{\pi\mid i}+\varepsilon^\star(\lambda x)}{\alpha_{\pi\mid i}+\lambda\,\varepsilon^\star(\lambda x)}\ \to\ 1\quad(x\to 0^+).
  \]
  By Lemma~\ref{lem:UCT}, applied at $(\lambda x,\varepsilon^\star(\lambda x))$ to the compact multiplier sets $\{1/\lambda\}$ for $A^+$ and $\{1/(\lambda\,\vartheta(x))\}$ for $B^+$ (both lying in a fixed compact around $1/\lambda$ for $x$ sufficiently small), there exists $x_1=x_1(\zeta)>0$ such that, for all $0<x<x_1$,
  \color{black}
  \begin{equation}\label{eq:UCT-lower-A}
    A^+(\varepsilon^\star(\lambda x))\leq \frac{1}{1-\zeta} \lambda^{-\sigma} A^+(\lambda \varepsilon^\star(\lambda x)),
  \end{equation}
  \begin{equation}\label{eq:UCT-lower-B}
    B^+\Big(\frac{s^\star(x)}{\lambda \vartheta(x)}\Big)\leq \frac{1}{1-\zeta} \big(\lambda \vartheta(x)\big)^{-\sigma} B^+\big(s^\star(x)\big),
  \end{equation}
  and, by $\varepsilon^\star(\lambda x)\to 0$,
  \begin{equation}\label{eq:den-lower}
    \frac{\alpha_{\pi\mid i}+\varepsilon^\star(\lambda x)}{\alpha_{\pi\mid i}+\lambda \varepsilon^\star(\lambda x)}\leq (1+\zeta)^{1/2},\qquad
    \vartheta(x)\ \in\ \big[(1+\zeta)^{-1/2},\ (1+\zeta)^{1/2}\big].
  \end{equation}
  From \eqref{eq:UCT-lower-B} and \eqref{eq:den-lower},
  $(\lambda \vartheta(x))^{-\sigma}\le (1+\zeta)^{1/2} \lambda^{-\sigma}$ for $0< x < x_1$.

  Let $x^\star = \min\{x_0, x_1\}$. Using \eqref{eq:ell-scale} at
  $\varepsilon=\varepsilon^\star(x)$, then \eqref{eq:UCT-upper-A},
  \eqref{eq:UCT-upper-B-compressed}, and \eqref{eq:den-upper}, for all
  $0<x<x^\star$,
  \[
    \begin{aligned}
      f(\lambda x)
      &\le\ \text{\rm objective}(\lambda\,\varepsilon^\star(x),\lambda x)\\
      &=\ A^+(\lambda\,\varepsilon^\star(x))+(\alpha_{\pi\mid i}+\lambda\,\varepsilon^\star(x))\,B^+\big(\lambda\,\theta(x)\,t^\star(x)\big)\\
      &\le\ (1+\zeta)^{1/3}\,\lambda^{\sigma}\,A^+(\varepsilon^\star(x))\ +\ (1+\zeta)^{1/3}\,(1+\zeta)^{2/3}\,\lambda^{\sigma}\,(\alpha_{\pi\mid i}+\varepsilon^\star(x))\,B^+(t^\star(x))\\
      &\leq\ (1+\zeta)\,\lambda^{\sigma}\,\big[A^+(\varepsilon^\star(x))+(\alpha_{\pi\mid i}+\varepsilon^\star(x))\,B^+(t^\star(x))\big]\\
      &=\ (1+\zeta)\,\lambda^{\sigma}\,f(x),
    \end{aligned}
  \]
  so that
  \begin{equation}\label{eq:upper-bound}
    f(\lambda x)\ \le\ (1+\zeta)\,\lambda^{\sigma}\,f(x).
  \end{equation}
  Next, we apply \eqref{eq:upper-bound} with $(x,\lambda)$ replaced by
  $(\lambda x,\,1/\lambda)$. For sufficiently small $x$, this gives
  \begin{equation}\label{eq:lower-from-upper}
    f(\lambda x) \ge \frac{\lambda^{\sigma}}{1+\zeta}\,f(x).
  \end{equation}
  Combining \eqref{eq:upper-bound} and \eqref{eq:lower-from-upper}, for
  all sufficiently small $x>0$,
  \[
    \frac{\lambda^{\sigma}}{1+\zeta}\ \le\ \frac{f(\lambda x)}{f(x)}\ \le\ (1+\zeta)\,\lambda^{\sigma}.
  \]
  Letting $x\to 0^+$ and then $\zeta\to 0^+$ yields
  $f(\lambda x)/f(x)\to \lambda^{\sigma}$ for each fixed $\lambda>0$,
  i.e., $f\in\RV^{0^+}_{\sigma}$.

  \color{black}

  \noindent\textbf{Case B:} $\alpha_{\pi\mid i}>0$ and $\alpha_{j\mid \pi}=0$.
  Evaluating the objective function at $\varepsilon=0$ gives
  $f(x)\le \alpha_{\pi\mid i}\,B^+(x/\alpha_{\pi\mid i})$. Also
  $A^+(\varepsilon)=g_{\{i,\pi\}}(1,\alpha_{\pi\mid i}+\varepsilon)-1\ge 0$ for all
  $\varepsilon>-\alpha_{\pi\mid i}$. Fix
  $\varepsilon'\in(0,\alpha_{\pi\mid i}/2)$. Since $A^+$ is continuous and
  $A^+(0)=0$ is the rightmost minimizer,
  $m_R:=\inf\{A^+(\varepsilon):\varepsilon\ge \varepsilon'\}>0$, hence any
  $\varepsilon\ge \varepsilon'$ is suboptimal for $x$ small because
  $f(x)\le \alpha_{\pi\mid i}B^+(x/\alpha_{\pi\mid i})\to 0$.

  Next, consider $\varepsilon\le -\varepsilon'$. Fix
  $\delta\in(0,\alpha_{\pi\mid i}-\varepsilon')$. For
  $\varepsilon\in[-\alpha_{\pi\mid i}+\delta,-\varepsilon']$ the multiplier
  $u(\varepsilon):=\alpha_{\pi\mid i}/(\alpha_{\pi\mid i}+\varepsilon)$ ranges over the fixed compact
  $[u_{\min},u_{\max}]$, with
  $u_{\min}:=\alpha_{\pi\mid i}/(\alpha_{\pi\mid i}-\varepsilon')>1$ and
  $u_{\max}:=\alpha_{\pi\mid i}/\delta$. By Lemma~\ref{lem:UCT} for
  $F=B^+$ on $[u_{\min},u_{\max}]$, there exist
  $c_{B,1}(\delta,\varepsilon')\in(0,1)$ and $x_0>0$ such that, for all
  $0<x<x_0$ and all $\varepsilon\in[-\alpha_{\pi\mid i}+\delta,-\varepsilon']$,
  \[
    B^+(x/(\alpha_{\pi\mid i}+\varepsilon))=B^+(u(\varepsilon)\,x/\alpha_{\pi\mid i})\ge c_{B,1}(\delta,\varepsilon')\,B^+(x/\alpha_{\pi\mid i}).
  \]
  Hence, using $A^+(\varepsilon)\ge 0$,
  \[
    \text{objective}(\varepsilon,x)\ge (\alpha_{\pi\mid i}+\varepsilon)\,B^+(x/(\alpha_{\pi\mid i}+\varepsilon))
    \ge c_{B,1}(\delta,\varepsilon')\,\alpha_{\pi\mid i}\,B^+(x/\alpha_{\pi\mid i}).
  \]

  For $\varepsilon\in(-\alpha_{\pi\mid i},-\alpha_{\pi\mid i}+\delta)$ the multiplier
  $u(\varepsilon)=\alpha_{\pi\mid i}/(\alpha_{\pi\mid i}+\varepsilon)\ge \alpha_{\pi\mid i}/\delta$ is unbounded as
  $\varepsilon\downarrow -\alpha_{\pi\mid i}$. By Potter bounds at $0^+$ for
  $B^+$ with index $\sigma_b\ge 1$, for any $\eta\in(0,1)$ there exist
  $C(\eta)>1$ and $x_1>0$ such that, for all $0<x<x_1$ and all
  $\varepsilon$ with $\alpha_{\pi\mid i}+\varepsilon\in(0,\delta]$,
  \[
    B^+(u(\varepsilon)\,x/\alpha_{\pi\mid i})\ge C(\eta)^{-1}\,u(\varepsilon)^{\sigma_b-\eta}\,B^+(x/\alpha_{\pi\mid i}).
  \]
  Therefore
  \begin{IEEEeqnarray*}{rcl}
    \text{objective}(\varepsilon,x)\ge (\alpha_{\pi\mid i}+\varepsilon)\,B^+(x/(\alpha_{\pi\mid i}+\varepsilon))
    &=& \alpha_{\pi\mid i}\,u(\varepsilon)^{-1}\,B^+(u(\varepsilon)\,x/\alpha_{\pi\mid i}) \\
    &\ge& C(\eta)^{-1}\,\alpha_{\pi\mid i}\,u(\varepsilon)^{\sigma_b-\eta-1}\,B^+(x/\alpha_{\pi\mid i}).
  \end{IEEEeqnarray*}

  In what follows, we split cases to $\sigma_b>1$ and $\sigma_b = 1$.

  If $\sigma_b>1$, choose $\eta\in(0,\sigma_b-1)$. Then
  $\sigma_b-\eta-1>0$, so the minimum over the strip occurs at the smallest
  multiplier $u(\varepsilon)=\alpha_{\pi\mid i}/\delta$. Thus
  \[
    \text{objective}(\varepsilon,x)\ge c_{B,2}(\delta,\eta)\,\alpha_{\pi\mid i}\,B^+(x/\alpha_{\pi\mid i}),\qquad c_{B,2}(\delta,\eta):=C(\eta)^{-1}\,(\alpha_{\pi\mid i}/\delta)^{\sigma_b-\eta-1}.
  \]
  Combining, there exist $c_B(\varepsilon',\delta,\eta)\in(0,1)$ and
  $x^\ast>0$ such that, for all $0<x<x^\ast$ and all $\varepsilon\le -\varepsilon'$,
  \[
    \text{objective}(\varepsilon,x)\ge c_B(\varepsilon',\delta,\eta)\,\alpha_{\pi\mid i}\,B^+(x/\alpha_{\pi\mid i}).
  \]
  For $\varepsilon\in(-\varepsilon',\varepsilon')$ we have
  $\alpha_{\pi\mid i}+\varepsilon\in[\alpha_{\pi\mid i}/2,\alpha_{\pi\mid i}+\varepsilon']$, and
  $u(\varepsilon)\in[\,\alpha_{\pi\mid i}/(\alpha_{\pi\mid i}+\varepsilon'),\,1\,]$. By Lemma~\ref{lem:UCT} for
  $F=B^+$ on that compact and $A^+(\varepsilon)\ge 0$, for $x$ small
  \[
    (1-\omega_B(\varepsilon'))\,\alpha_{\pi\mid i}\,B^+(x/\alpha_{\pi\mid i})\le f(x)\le \alpha_{\pi\mid i}\,B^+(x/\alpha_{\pi\mid i}),
  \]
  hence $f(x)/(\alpha_{\pi\mid i}B^+(x/\alpha_{\pi\mid i}))\to 1$, i.e., $f\in\RV^{0^+}_{\sigma_b}$ and $\beta_{j\mid i}=\beta_{j\mid \pi}$.

  If $\sigma_b=1$, we only claim the index and avoid any localization at the
  minimizer. Fix $\lambda>0$ and $\delta>0$. By Lemma~\ref{lem:UCT} for
  $B^+$ at fixed multipliers $\lambda$ and $1/\lambda$, there exist
  $x_3,x_4>0$ such that, for all $0<x<\min\{x_3,x_4\}$ and all small
  $t$,
  \[
    B^+(\lambda t)\le (1+\delta)\,\lambda\,B^+(t),\qquad B^+(t/\lambda)\le (1+\delta)\,(1/\lambda)\,B^+(t).
  \]
  For the upper bound, note that for any $\varepsilon\ge -\alpha_{\pi\mid i}/2$,
  \[
    \text{objective}(\varepsilon,\lambda x)\le A^+(\varepsilon)+(1+\delta)\,\lambda\,(\alpha_{\pi\mid i}+\varepsilon)\,B^+(x/(\alpha_{\pi\mid i}+\varepsilon))\le (1+\delta)\,\lambda\,\text{objective}(\varepsilon,x),
  \]
  so taking inf over $\varepsilon\ge -\alpha_{\pi\mid i}/2$ yields $f(\lambda x)\le (1+\delta)\,\lambda\,\inf_{\varepsilon\ge -\alpha_{\pi\mid i}/2}\text{objective}(\varepsilon,x)\le (1+\delta)\,\lambda\,f(x)$.

  For the lower bound, note that for any $\varepsilon\ge -\alpha_{\pi\mid i}/2$,
  \[
    \text{objective}(\varepsilon,x)\le A^+(\varepsilon)+(1+\delta)\,(1/\lambda)\,(\alpha_{\pi\mid i}+\varepsilon)\,B^+(\lambda x/(\alpha_{\pi\mid i}+\varepsilon))\le (1+\delta)\,(1/\lambda)\,\text{objective}(\varepsilon,\lambda x),
  \]
  so taking inf over $\varepsilon\ge -\alpha_{\pi\mid i}/2$ gives
  $f(x)\le (1+\delta)\,(1/\lambda)\,f(\lambda x)$, i.e.,
  $f(\lambda x)\ge \lambda\,(1+\delta)^{-1}\,f(x)$. Combining,
  \[
    \lambda\,(1+\delta)^{-1}\le \frac{f(\lambda x)}{f(x)}\le (1+\delta)\,\lambda\qquad(0<x<\min\{x_3,x_4\}),
  \]
  and letting $x\to 0^+$ then $\delta\downarrow 0$ yields
  $f(\lambda x)/f(x)\to \lambda$, i.e., $f\in\RV^{0^+}_1$.

  \noindent \textbf{Case C:} $\alpha_{\pi\mid i}=0$ and $\alpha_{j\mid \pi}>0$. Here
  \[
    f(x)=\min_{\varepsilon\ge 0}\{A^+(\varepsilon)+\varepsilon(g_{\{\pi,j\}}(1,x/\varepsilon)-1)\}, \qquad
    \varepsilon_0(x):=x/\alpha_{j\mid \pi}.
  \]
  Evaluating the objective at $\varepsilon=\varepsilon_0(x)$ gives
  $f(x)\le A^+(x/\alpha_{j\mid \pi})$ and $\ell(\varepsilon_0(x),x)=0$.

  Fix $\varepsilon'\in(0,1)$ and $\eta\in(0,1)$. The exclusion of the entire right
  interval $\{\varepsilon\,:\,\varepsilon\ge (1+\eta)\varepsilon_0(x)\}$ follows exactly as in Case A
  $(i)$. Below the fixed cutoff $\varepsilon'$ we apply Lemma~\ref{lem:Potter0} to
  obtain a uniform constant which is greater than unity, comparing
  $A^+(\varepsilon)$ with $A^+(\varepsilon_0(x))$ for all
  $(1+\eta)\varepsilon_0(x)\le \varepsilon<\varepsilon'$, and for
  $\varepsilon\ge \varepsilon'$ we use the fixed tail constant
  $m_{A^+}:=\inf\{A^+(\varepsilon)>0\,:\, \varepsilon\ge \varepsilon'\}$ while
  $A^+(\varepsilon_0(x))\to 0$. Thus, for $x$ small,
  \[
    \text{objective}(\varepsilon,x) > \text{objective}(\varepsilon_0(x),x)=A^+(\varepsilon_0(x)), \qquad
    \text{ for all $\varepsilon\ge (1+\eta)\varepsilon_0(x)$}.
  \]
  On the interval
  $\varepsilon\in[(1-\eta)\varepsilon_0(x),(1+\eta)\varepsilon_0(x)]$, as in Case A $(i)$ we use
  Lemma~\ref{lem:UCT} for $F=A^+$ on $[1-\eta,1+\eta]$ and obtain
  \begin{equation}\label{eq:caseC-band}
    (1-\omega_A(\eta))\,A^+(\varepsilon_0(x))\leq A^+(\varepsilon)\leq (1+\omega_A(\eta))\,A^+(\varepsilon_0(x)),
  \end{equation}
  with $\omega_A(\eta)\to 0$ as $\eta\to 0$. Since
  $B^+(\ell(\varepsilon,x))$ vanishes at $\varepsilon=\varepsilon_0(x)$, evaluating the objective there
  gives the matching upper bound $f(x)\le A^+(x/\alpha_{j\mid \pi})$.

  For the left interval $\varepsilon < (1-\eta) \varepsilon_0(x)$ we split by
  $\sigma_a$. Suppose $\sigma_a>1$. Evaluating the objective function at
  $\varepsilon=\varepsilon_0(x)$ gives
  $f(x)\le A^+(x/\alpha_{j\mid \pi})$. Fix
  $\eta\in(1-\alpha_{j\mid \pi},\,1)$. Then for every
  $\varepsilon\in[0,(1-\eta)\varepsilon_0(x)]$ we have
  $x/\varepsilon\ge x/\{(1-\eta)\varepsilon_0(x)\} = \alpha_{j\mid \pi}/(1-\eta) > 1$ and hence,
  $\varepsilon(g_{\{\pi,j\}}(1,x/\varepsilon)-1) \ge \varepsilon((x/\varepsilon)-1) = x-\varepsilon \ge x-(1-\eta)\,\varepsilon_0(x) =
  (1-(1-\eta)/\alpha_{j\mid \pi})\,x =:\ c(\eta)\,x$, with
  $c(\eta)>0$. Also, $A^+(\varepsilon)\geq0$, and therefore
  \[
    \text{objective}(\varepsilon,x) = A^+(\varepsilon) + \varepsilon(g_{\{\pi,j\}}(1,x/\varepsilon)-1) \geq c(\eta) x, \quad
    \text{for all $\varepsilon\in[0,(1-\eta)\varepsilon_0(x)]$}.
  \]
  This excludes uniformly the whole left interval
  $[0,(1-\eta)\varepsilon_0(x)]$. In particular, because $\sigma_a>1$, then
  $\text{objective}(\varepsilon_0(x),x) = A^+(x/\alpha_{j\mid \pi})=o(x)$, so every
  $\varepsilon$ in this interval is strictly suboptimal relative to $\varepsilon_0(x)$.

  Combining the left-interval exclusion with the right-interval
  exclusion and \eqref{eq:caseC-band} yields
  \[
    (1-\omega_A(\eta))\,A^+\Big(\frac{x}{\alpha_{j\mid \pi}}\Big)\leq f(x)\leq
    A^+\Big(\frac{x}{\alpha_{j\mid \pi}}\Big),
  \]
  for all sufficiently small $x>0$. Hence,
  $f(x)\sim A^+(x/\alpha_{j\mid \pi})$ as $x\to 0^+$, i.e.,
  $f\in \RV^{0^+}_{\sigma_a}$ and $\beta_{j\mid i}=\beta_{\pi\mid i}$.

  Next, suppose $\sigma_a=1$. The right–interval exclusion and
  \eqref{eq:caseC-band} above hold verbatim for $\sigma_a=1$. To conclude
  $f\in \RV^{0^+}_1$ we use the following scaling argument.

  Fix $\lambda>0$ and let $\varepsilon^\star(x)$ be a minimizer for
  $f(x)$ satisfying $\varepsilon^\star(x)\to0$ as $x\to0^{+}$. Since $A^+\in \RV^{0^+}_1$, by Lemma~\ref{lem:UCT}
  at fixed $\lambda > 0$, we have that for any $\delta>0$, there exists
  $x_0>0$ such that
  $A^+(\lambda\,\varepsilon^\star(x)) \le (1+\delta) \, \lambda\, A^+(\varepsilon^\star(x))$, for all
  $0<x<x_0$. Thus, for any $\delta > 0$, we get
  \begin{IEEEeqnarray*}{rCl}
    f(\lambda x) \leq \text{objective}(\lambda \varepsilon^\star(x), \lambda x)
    &=&A^+(\lambda\,\varepsilon^\star(x))+\lambda\,\varepsilon^\star(x)(g_{\{\pi,j\}}(1,x/\varepsilon^\star(x))-1) \leq (1+\delta)\,
    \lambda\,f(x)
  \end{IEEEeqnarray*}
  so $\limsup_{x\to 0^+} f(\lambda x)/f(x)\le \lambda$.

  Conversely, let $\varepsilon^\star(\lambda x)$ be a minimizer for
  $f(\lambda x)$. Since $A^+\in\RV^{0^+}_1$, by Lemma~\ref{lem:UCT} at
  fixed $1/\lambda>0$, for any $\delta>0$ there exists $x_1>0$ such that
  $A^+(\varepsilon^\star(\lambda x)/\lambda)\leq (1+\delta)\,A^+(\varepsilon^\star(\lambda x))/\lambda$ for all $0<x<x_1$. Hence,
  \begin{IEEEeqnarray*}{rCl}
    f(x) &=&\text{objective}(\varepsilon^\star(\lambda x)/\lambda,x)
    = A^+(\varepsilon^\star(\lambda x)/\lambda)\ +\ (\varepsilon^\star(\lambda x)/\lambda)\Big(g_{\{\pi,j\}}(1,x/(\varepsilon^\star(\lambda x)/\lambda))-1\Big)\\
    &=& A^+(\varepsilon^\star(\lambda x)/\lambda)\ +\ (1/\lambda)\,\varepsilon^\star(\lambda x)\Big(g_{\{\pi,j\}}(1,(\lambda x)/\varepsilon^\star(\lambda x))-1\Big)\\
    &\le& (1+\delta)\,A^+(\varepsilon^\star(\lambda x))/\lambda\ +\ (1/\lambda)\,\varepsilon^\star(\lambda x)\Big(g_{\{\pi,j\}}(1,(\lambda x)/\varepsilon^\star(\lambda x))-1\Big)\\
    &\le& (1+\delta)\,f(\lambda x)/\lambda,
  \end{IEEEeqnarray*}
  for all $0<x<x_1$, so
  $\liminf_{x\to 0^+} f(\lambda x)/f(x) \ge \lambda$. Thus,
  $\lim_{x\to 0^+}f(\lambda x)/f(x)=\lambda$ and $\beta_{j\mid i}=\beta_{\pi\mid i}$.

  \color{black}
  \noindent \textbf{Case D:} $\alpha_{\pi\mid i}=0$ and $\alpha_{j\mid \pi}=0$.\newline
  \noindent \emph{Subcase D.1: $\sigma_a>1$ and $\sigma_b>1$}. We have
  $f(x)=\min_{\varepsilon\ge 0}\{A^+(\varepsilon)+\varepsilon\,B^+(x/\varepsilon)\}$ where
\[
  A^+(\varepsilon)=\varepsilon^{\sigma_a}L_a(\varepsilon),\quad B^+(\varepsilon)=\varepsilon^{\sigma_b}L_b(\varepsilon),
\]
with $L_a,L_b$ slowly varying at $0^+$. Set
\[
  \gamma:=\frac{\sigma_b}{\sigma_a+\sigma_b-1},\qquad \rho:=\frac{\sigma_a\sigma_b}{\sigma_a+\sigma_b-1},\qquad
  r(x):=\frac{L_b(x^{1-\gamma})}{L_a(x^\gamma)}.
\]
Change variables $\varepsilon=x^\gamma u$. Then
\[
  A^+(x^\gamma u)=x^\rho\,u^{\sigma_a}L_a(x^\gamma u),\qquad
  x^\gamma u\,B^+\big(x/(x^\gamma u)\big)=x^\rho\,u^{1-\sigma_b}L_b(x^{1-\gamma}/u),
\]
so
\[
  f(x)=x^\rho\,\inf_{u>0}F_x(u),\text{ with }
  F_x(u):=u^{\sigma_a}L_a(x^\gamma u)+u^{1-\sigma_b}L_b(x^{1-\gamma}/u).
\]

Fix $\varepsilon\in(0,1)$ and define the balance point
$\bar u(x):=r(x)^{1/(\sigma_a+\sigma_b-1)}$. Write $u=\bar u(x)\,v$ with
$v>0$. Then
\[
  F_x(\bar u\,v)=\bar u(x)^{\sigma_a}\,v^{\sigma_a}\,L_a\!\big(x^\gamma \bar u(x)\,v\big)+\bar u(x)^{\,1-\sigma_b}\,v^{1-\sigma_b}\,L_b\!\big(x^{1-\gamma}/(\bar u(x)\,v)\big).
\]
Fix $C\ge 1$. By Lemma~\ref{lem:UCT}, there exists $x_0=x_0(C,\varepsilon)>0$ such that, for all $0<x<x_0$ and all $v\in[1/C,C]$,
\[
  (1-\varepsilon)\,L_a\!\big(x^\gamma \bar u(x)\big)\ \le\ L_a\!\big(x^\gamma \bar u(x)\,v\big)\ \le\ (1+\varepsilon)\,L_a\!\big(x^\gamma \bar u(x)\big),
\]
\[
  (1-\varepsilon)\,L_b\!\big(x^{1-\gamma}/\bar u(x)\big)\ \le\ L_b\!\big(x^{1-\gamma}/(\bar u(x)\,v)\big)\ \le\ (1+\varepsilon)\,L_b\!\big(x^{1-\gamma}/\bar u(x)\big).
\]
Hence, uniformly for $v\in[1/C,C]$ and $0<x<x_0$,
\[
(1-\varepsilon)\,G_x(v)\ \le\ F_x\big(\bar u(x)\,v\big)\ \le\ (1+\varepsilon)\,G_x(v),
\]
where
\color{black}
\begin{IEEEeqnarray*}{rCl}
  G_x(v) &:=& \bar u(x)^{\sigma_a}\,v^{\sigma_a}\,L_a\!\big(x^\gamma \bar u(x)\big)\ +\ \bar u(x)^{\,1-\sigma_b}\,v^{1-\sigma_b}\,L_b\!\big(x^{1-\gamma}/\bar u(x)\big)\\\\
  &=& \bar u(x)^{\sigma_a}\,L_a\!\big(x^\gamma \bar u(x)\big)\,[v^{\sigma_a}+\tilde r(x)\,v^{1-\sigma_b}],
\end{IEEEeqnarray*}
with
\[
  \tilde r(x):=\bar u(x)^{1-\sigma_b-\sigma_a}\,\frac{L_b\!\big(x^{1-\gamma}/\bar
    u(x)\big)}{L_a\!\big(x^\gamma \bar u(x)\big)}.
\]
\color{black}
For the tails $v\ge C$ and $v\le 1/C$ we apply Lemma~\ref{lem:Potter0} to
the regularly varying functions $A^+(\cdot)$ and $B^+(\cdot)$. Fix
$\delta\in(0,\min\{\sigma_a-1,\sigma_b-1,1\})$ and let
$c:=c(C,\delta)\ge 1$ and $x_1=x_1(C,\delta)>0$ be as given by
Lemma~\ref{lem:Potter0}. For all $0<x<x_1$ and all $v\ge C$,
\[
  \frac{A^+(x^\gamma \bar u\,v)}{A^+(x^\gamma \bar u)}\ \ge\ c^{-1}\,v^{\sigma_a-\delta},\qquad
  \frac{B^+\big(x^{1-\gamma}/(\bar u\,v)\big)}{B^+\big(x^{1-\gamma}/\bar u\big)}\ \le\ c\,v^{-(\sigma_b-\delta)}.
\]
\color{black}
Using $A^+(t)=t^{\sigma_a}L_a(t)$ and $B^+(t)=t^{\sigma_b}L_b(t)$, we can rewrite
\[
  F_x(\bar u\,v)\;=\;x^{-\gamma\sigma_a}\,A^+(x^\gamma \bar u\,v)\;+\;x^{-(1-\gamma)\sigma_b}\,(\bar u v)\,B^+\!\big(x^{1-\gamma}/(\bar u v)\big).
\]
Therefore, by Lemma~\ref{lem:Potter0}, for all $v\ge C$,
\[
  A^+(x^\gamma \bar u\,v)\ \ge\ c^{-1}\,v^{\sigma_a-\delta}\,A^+(x^\gamma \bar u),
\]
and hence
\begin{equation}
  F_x(\bar u\,v)\ \ge\ x^{-\gamma\sigma_a}\,c^{-1}\,v^{\sigma_a-\delta}\,A^+(x^\gamma \bar u)
  \;=\;c^{-1}\,\bar u^{\sigma_a}\,v^{\sigma_a-\delta}\,L_a\!\big(x^\gamma \bar u\big).
  \label{eq:tail_bound1}
\end{equation}
Similarly, for all $v\le 1/C$,
\[
  B^+\!\big(x^{1-\gamma}/(\bar u v)\big)\ \ge\ c^{-1}\,v^{-(\sigma_b-\delta)}\,B^+\!\big(x^{1-\gamma}/\bar u\big),
\]
so
\begin{equation}
  F_x(\bar u\,v)\ \ge\ x^{-(1-\gamma)\sigma_b}\,(\bar u v)\,c^{-1}\,v^{-(\sigma_b-\delta)}\,B^+\!\big(x^{1-\gamma}/\bar u\big)
  \;=\;c^{-1}\,\bar u^{1-\sigma_b}\,v^{\,1-\sigma_b+\delta}\,L_b\!\big(x^{1-\gamma}/\bar u\big).
  \label{eq:tail_bound2}
\end{equation}
\color{black}

Let $\phi(r):=\inf_{v>0}\big[v^{\sigma_a}+r\,v^{1-\sigma_b}\big]$ and
$\phi_{[C]}(r):=\inf_{v\in[1/C,C]}\big[v^{\sigma_a}+r\,v^{1-\sigma_b}\big]$. Note
that $v\mapsto v^{\sigma_a}+r\,v^{\,1-\sigma_b}$ has a unique minimizer on $(0,\infty)$ at
\[
  \color{black}v_\star(r):=\big((\sigma_b-1)r/\sigma_a\big)^{1/(\sigma_a+\sigma_b-1)},
  \color{black}
\]
\color{black} Since $\tilde r$ is slowly varying at $0^+$, there exist
$x_\ast>0$ and constants
$0<\tilde r_{\min}\le \tilde r(x)\le \tilde r_{\max}<\infty$ for all
$0<x<x_\ast$. The map $v\mapsto v^{\sigma_a}+\tilde r\,v^{1-\sigma_b}$ has a unique
minimizer $v_\star(\tilde r)$, which is continuous and increasing in
$\tilde r$. Hence
$v_\star(\tilde r(x))\in[\,v_\star(\tilde r_{\min}),\,v_\star(\tilde r_{\max})\,]$
for $0<x<x_\ast$. Choosing
\[
  C\ \ge\ \max\{\,v_\star(\tilde r_{\max}),\ 1/v_\star(\tilde r_{\min})\,\}
\]
ensures $v_\star(\tilde r(x))\in[1/C,\,C]$ for all sufficiently small $x$.


With $C$ as above so that $v_\star(\tilde r(x))\in[1/C,\,C]$ for all
sufficiently small $x$, we obtain, for $0<x<\min\{x_0,x_1\}$,
\[
  \inf_{v\in[1/C,C]} F_x\big(\bar u(x)\,v\big)\ \asymp\ G_x\big(v_\star(\tilde
  r(x))\big), 
\]
with two-sided comparison factors bounded by $(1\pm\varepsilon)$, uniformly in $v\in[1/C,C]$.
Here, we write $U(x) \asymp V(x)$ as $x\to0^+$ to mean there exist constants
$0< c_1\leq c_2<\infty$ and $x_1>0$, independent of $x$, such that
$c_1 V(x) \leq U(x) \leq c_2 V(x)$ for all $0<x<x_1$. Moreover, by the tail
bounds \eqref{eq:tail_bound1} and \eqref{eq:tail_bound2}, and since $v\mapsto v^{\sigma_a-\delta}$ and
$v\mapsto v^{1-\sigma_b+\delta}$ are monotone on $(1,\infty)$ and
$(0,1)$ respectively, we can choose $C$ sufficiently large (depending
only on $\varepsilon,\delta,\sigma_a,\sigma_b$) so that both tail infima
\[
  c^{-1}C^{\sigma_a-\delta}\,\bar u(x)^{\sigma_a}L_a(x^\gamma \bar u)\qquad\text{and}\qquad
  c^{-1}C^{\sigma_b-1-\delta}\,\bar u(x)^{1-\sigma_b}L_b(x^{1-\gamma}/\bar u)
\]
exceed $(1+\varepsilon)\,G_x\big(v_\star(\tilde r(x))\big)$. Therefore, for all
$0<x<\min\{x_0,x_1\}$,
\[
  \inf_{v\notin[1/C,C]}F_x(\bar u\,v)\ \ge\ \inf_{v\in[1/C,C]}F_x(\bar u\,v),
\]
and the global infimum of $F_x(u)$ over $u>0$ equals its infimum over $u\in[\bar u(x)/C,\ \bar u(x)\,C]$. Consequently,
\color{black}
\[
  f(x)=x^\rho\,\inf_{v\in[1/C,C]}F_x\big(\bar u(x)\,v\big)\ \asymp\ x^\rho\,\bar u(x)^{\sigma_a}\,L_a\!\big(x^\gamma \bar u(x)\big)\,\phi_{[C]}\big(\tilde r(x)\big).
\]
\color{black}

\color{black} It remains to show that 
$L(x):=\bar u(x)^{\sigma_a}\,L_a\!\big(x^\gamma \bar
u(x)\big)\,\phi_{[C]}\big(\tilde r(x)\big)$ is slowly varying at
$0^+$. For each fixed $\lambda>0$ define $y(x):=x^\gamma \bar u(x)$. Since
$r(x)=L_b(x^{1-\gamma})/L_a(x^\gamma)$ and $L_a,L_b$ are slowly varying at
$0^+$, we have $r(\lambda x)/r(x)\to 1$, hence
$\bar u(\lambda x)/\bar u(x)=\big(r(\lambda x)/r(x)\big)^{1/(\sigma_a+\sigma_b-1)}\to
1$. Consequently,
\[
  \frac{y(\lambda x)}{y(x)}=\frac{(\lambda x)^\gamma\,\bar u(\lambda x)}{x^\gamma\,\bar u(x)}\ \to\ \lambda^\gamma\qquad(x\to 0^+).
\]
Fix $\lambda>0$ and choose $\kappa>1$ so that, for all sufficiently small
$x>0$,
$y(\lambda x)/y(x) \in [\lambda^\gamma/ \kappa, \kappa \lambda^\gamma]\subset(0,\infty)$.  By the uniform convergence
theorem for slow variation at $0^+$, it follows that
$L_a(y(\lambda x))/L_a(y(x))\ \to 1$ as $x\to 0^+$. Moreover,
$r(\lambda x)/r(x)\to 1$ and $r\mapsto\phi_{[C]}(r)$ is Lipschitz on
$(0,\infty)$, hence \color{black}
\[
  \frac{\phi_{[C]}(\tilde r(\lambda x))}{\phi_{[C]}(\tilde r(x))}\ \to\ 1, \qquad(x\to
  0^+).
\]
\color{black}
Combining the two displays yields, for each fixed $\lambda>0$,
\[
  \frac{L(\lambda x)}{L(x)}\ =\ \frac{L_a\!\big(y(\lambda x)\big)}{L_a\!\big(y(x)\big)}\cdot \frac{\phi_{[C]}(r(\lambda x))}{\phi_{[C]}(r(x))}\ \to\ 1\qquad(x\to 0^+),
\]
i.e., $L$ is slowly varying at $0^+$. Therefore,
\[
  \frac{f(\lambda x)}{f(x)}=\lambda^\rho\,\frac{L(\lambda x)}{L(x)}\cdot(1+o(1))\ \to\ \lambda^\rho\qquad(x\to 0^+),
\]
which shows $f\in\RV^{0^+}_\rho$ with $\rho=\sigma_a\sigma_b/(\sigma_a+\sigma_b-1)$ and $\beta_{j\mid i}=\beta_{\pi\mid i}\,\beta_{j\mid \pi}$.

\noindent \emph{Subcase D.2: $\min(\sigma_a,\sigma_b)=1$.}  We show that
$f\in\RV^{0^+}_1$ by a scaling sandwich based only on
Lemma~\ref{lem:UCT}, without any tail truncation as in \emph{Subcase
  D.1}.  Without loss of generality assume $\sigma_b=1$ (the case
$\sigma_a=1$ is treated similarly by applying Lemma~\ref{lem:UCT} to
$A^+$ in the argument below). Fix $\lambda>0$ and $\zeta\in(0,1)$. By
Lemma~\ref{lem:UCT} applied to $B^+\in\RV^{0^+}_1$ at the fixed
multipliers $\lambda$ and $1/\lambda$, there exists $x_0>0$ such that, for all
$0<x<x_0$ and all small $t>0$,
\[
B^+(\lambda t)\ \le\ (1+\zeta)\,\lambda\,B^+(t),\qquad B^+(t/\lambda)\ \le\ (1+\zeta)\,(1/\lambda)\,B^+(t).
\]

Let
$\varepsilon^\star(x)\in\arg\min_{\varepsilon>0}\text{\rm objective}(\varepsilon,x)$. Then, for all
$0<x<x_0$,
\[
\begin{aligned}
f(\lambda x)
&\le\ \text{\rm objective}(\varepsilon^\star(x),\lambda x)\\
&=\ A^+(\varepsilon^\star(x))+\varepsilon^\star(x)\,B^+(\lambda x/\varepsilon^\star(x))\\
&\le\ A^+(\varepsilon^\star(x))+(1+\zeta)\,\lambda\,\varepsilon^\star(x)\,B^+(x/\varepsilon^\star(x))\\
&\le\ (1+\zeta)\,\lambda\,f(x).
\end{aligned}
\]
If $\lambda\ge 1$, let
$\varepsilon^\star(\lambda x)\in\arg\min_{\varepsilon>0}\text{\rm objective}(\varepsilon,\lambda x)$ and set
$\tilde\varepsilon(x):=\varepsilon^\star(\lambda x)/\lambda$. Then, for all $0<x<x_0$,
\[
\begin{aligned}
f(x)
&\le\ \text{\rm objective}(\tilde\varepsilon(x),x)\\
&=\ A^+(\varepsilon^\star(\lambda x)/\lambda)+\big(\varepsilon^\star(\lambda x)/\lambda\big)\,B^+\Big(\frac{x}{\varepsilon^\star(\lambda x)/\lambda}\Big)\\
&=\ A^+(\varepsilon^\star(\lambda x)/\lambda)+(1/\lambda)\,\varepsilon^\star(\lambda x)\,B^+(\lambda x/\varepsilon^\star(\lambda x)).
\end{aligned}
\]
Using $A^+(\cdot)\ge 0$ and the inequality $B^+(t/\lambda)\le (1+\zeta)\,(1/\lambda)\,B^+(t)$ with $t=\lambda x/\varepsilon^\star(\lambda x)$ gives
\[
f(x)\ \le\ (1/\lambda)\,f(\lambda x)\qquad(\lambda\ge 1).
\]
For $\lambda\in(0,1)$ apply the previous inequality with $1/\lambda>1$ and $x$ replaced by $\lambda x$ to obtain
\[
f(\lambda x)\ \ge\ \lambda\,f(x).
\]
Combining the two bounds, for all $0<x<x_0$,
\[
\frac{\lambda}{1+\zeta}\ \le\ \frac{f(\lambda x)}{f(x)}\ \le\ (1+\zeta)\,\lambda.
\]
Letting $x\to 0^+$ and then $\zeta\to 0^+$ yields
$f(\lambda x)/f(x)\to \lambda$ for each fixed $\lambda>0$, i.e., $f\in\RV^{0^+}_1$.
\end{proof}

\subsection{Auxiliary Lemmas}
\color{black}We record two fundamental tools from regular variation
used throughout, the uniform convergence theorem and Potter
bounds. Standard statements appear in \citet[Theorems 1.5.2 and
1.5.6,][]{BinghamGoldieTeugels89}. For completeness—and because they
are used repeatedly in our proof of the beta recursion—we state their
$0^+$ versions, obtained from the $\infty$-results by the inversion
$x\mapsto 1/x$.\color{black}
\begin{lemma}[\color{black}Potter bounds\color{black}]\label{lem:Potter0}
  Let $F\in\RV^{0^+}_\sigma$ with $\sigma \geq 1$. Then for any
  $\delta>0$ and any $c>1$ there exists $x_0=x_0(\delta,c)>0$ such that for all
  $x_1, x_2 \in(0,x_0)$,
  \[
    c^{-1} \min\left\{\left(\frac{x_2}{x_1}\right)^{\sigma+\delta},\
      \left(\frac{x_2}{x_1}\right)^{\sigma-\delta}\right\} \leq
    \frac{F(x_2)}{F(x_1)}\leq
    c \max\left\{\left(\frac{x_2}{x_1}\right)^{\sigma+\delta},\
      \left(\frac{x_2}{x_1}\right)^{\sigma-\delta}\right\}.
  \]
  Equivalently, for all $x\in(0,x_0)$ and all $\lambda>0$ with
  $\lambda x \in (0,x_0)$,
  \[
    c^{-1} \min\{\lambda^{\sigma+\delta},\ \lambda^{\sigma-\delta}\} \leq \frac{F(\lambda x)}{F(x)}\leq
    c \max\{\lambda^{\sigma+\delta},\ \lambda^{\sigma-\delta}\}.
  \]
\end{lemma}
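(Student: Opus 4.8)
The plan is to obtain the $0^+$ statement as a direct corollary of the classical Potter bounds at infinity \citep[Theorem~1.5.6]{BinghamGoldieTeugels89}, via the inversion $x\mapsto 1/x$. Recall that $F\in\RV^{0^+}_\sigma$ means that $F$ is positive and (Lebesgue) measurable on some interval $(0,\eta)$ and satisfies $F(\lambda x)/F(x)\to\lambda^{\sigma}$ as $x\to0^+$ for every $\lambda>0$. First I would introduce $\widetilde F(y):=F(1/y)$ for $y>1/\eta$. Writing $y=1/x$ and $\mu=1/\lambda$, the change of variables gives $\widetilde F(\mu y)/\widetilde F(y)=F\big((\mu y)^{-1}\big)/F\big(y^{-1}\big)\to\mu^{-\sigma}$ as $y\to\infty$, so $\widetilde F\in\RV^{\infty}_{-\sigma}$; this is the only place where the bookkeeping of the index matters (degree $\sigma$ at $0^+$ corresponds to degree $-\sigma$ at $\infty$).

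Next I would apply the $\infty$-version of Potter's bounds to $\widetilde F$: given $c>1$ and $\delta>0$, there is $Y_0=Y_0(c,\delta)>1/\eta$ such that for all $y_1,y_2\ge Y_0$,
\[
c^{-1}\min\!\left\{\Big(\tfrac{y_2}{y_1}\Big)^{-\sigma+\delta},\ \Big(\tfrac{y_2}{y_1}\Big)^{-\sigma-\delta}\right\}
\ \le\ \frac{\widetilde F(y_2)}{\widetilde F(y_1)}\ \le\
c\,\max\!\left\{\Big(\tfrac{y_2}{y_1}\Big)^{-\sigma+\delta},\ \Big(\tfrac{y_2}{y_1}\Big)^{-\sigma-\delta}\right\},
\]
where the left inequality follows from the right one by swapping $y_1$ and $y_2$ and taking reciprocals, using positivity of $\widetilde F$ beyond $Y_0$. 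Then I would translate back: set $x_0:=1/Y_0$, and for $x_1,x_2\in(0,x_0)$ put $y_i:=1/x_i\ge Y_0$, so that $\widetilde F(y_i)=F(x_i)$ and $y_2/y_1=x_1/x_2$. Since $(y_2/y_1)^{-\sigma\pm\delta}=(x_1/x_2)^{-\sigma\pm\delta}=(x_2/x_1)^{\sigma\mp\delta}$, the $\max$ and $\min$ over the two exponents are unchanged, and substituting turns the displayed bound into exactly the claimed inequality for $F(x_2)/F(x_1)$. The equivalent multiplicative form is the special case $x_1=x$, $x_2=\lambda x$ with both $x,\lambda x\in(0,x_0)$.

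I do not anticipate any genuine obstacle: the argument is a routine change of variable, and the whole content of the lemma is the explicit $0^+$ translation, which the paper wants on record because it is used repeatedly in the proof of Proposition~\ref{prop:block_graph_betas}. The only points deserving a word of care are (i) the sign flip of the index under $x\mapsto 1/x$, flagged above; (ii) the fact that $F$ is eventually positive near $0^+$, which is built into the meaning of $\RV^{0^+}$ and is what makes all the ratios well defined; and (iii) that the hypothesis $\sigma\ge1$ is not actually needed for the statement --- the classical Potter bounds hold for an arbitrary real index --- it merely records the regime in which Lemma~\ref{lem:Potter0} is invoked elsewhere.
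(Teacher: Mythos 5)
Your proof is correct and follows exactly the route the paper itself indicates (the paper gives no separate proof of Lemma~\ref{lem:Potter0}, noting only that the $0^+$ version is obtained from the classical $\infty$-version via the inversion $x\mapsto 1/x$). Your change-of-variables bookkeeping, the sign flip of the index, and the observation that $(y_2/y_1)^{-\sigma\pm\delta}=(x_2/x_1)^{\sigma\mp\delta}$ leaves the max/min pair invariant are all accurate, as is your remark that $\sigma\ge 1$ is inessential for the statement.
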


\begin{lemma}[\color{black} Uniform convergence theorem\color{black}]
  \label{lem:UCT}
  If $F\in \RV^{0^+}_{\sigma}$ with $\sigma\ge 1$, then for every compact
  $[\lambda_1,\lambda_2]\subset(0,\infty)$,
\[
  \sup_{\lambda\in[\lambda_1,\lambda_2]}\Big|\frac{F(\lambda x)}{F(x)}-\lambda^{\sigma}\Big|\to0, \quad \text{as
    $x\to 0^+$}.
\]
In particular, for any $\eta\in(0,1)$ there exist $x_0(\eta)\in(0,1)$ and
$\omega_F(\eta)\to 0^+$ as $\eta\to 0^+$ such that, for all
$x\in(0,x_0(\eta))$ and $\lambda\in[1-\eta,1+\eta]$,
\[
\frac{F(\lambda x)}{F(x)}\in[ 1-\omega_F(\eta), 1+\omega_F(\eta) ].
\]
\end{lemma}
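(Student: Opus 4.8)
The plan is to reduce the statement to the classical uniform convergence theorem at infinity via the inversion $x\mapsto 1/x$, exactly as the paragraph preceding the lemma announces; Lemma~\ref{lem:Potter0} is handled by the same device applied to the Potter bounds at infinity, so I would treat only Lemma~\ref{lem:UCT} in detail.

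First I would set $G(y):=F(1/y)$ for $y$ large and verify that $F\in\RV^{0^+}_{\sigma}$ is equivalent to $G\in\RV^{\infty}_{-\sigma}$: substituting $x=1/y$ into the defining relation $F(\lambda x)/F(x)\to\lambda^{\sigma}$ as $x\to0^+$ and writing $\mu=1/\lambda$ gives $G(\mu y)/G(y)\to\mu^{-\sigma}$ as $y\to\infty$, for every $\mu>0$. Measurability of $G$ is inherited from $F$, and membership in $\RV^{0^+}_{\sigma}$ already entails eventual positivity of $F$ near $0^+$, hence of $G$ near $\infty$, so every ratio below is well defined for $x$ small. Then I would invoke the uniform convergence theorem at infinity \citep[Theorem~1.5.2]{BinghamGoldieTeugels89} --- stated there for slowly varying functions, with the case $\sigma\neq 0$ obtained by factoring $G(y)=y^{-\sigma}\widetilde{L}(y)$ --- which yields that for every compact $[\mu_1,\mu_2]\subset(0,\infty)$, $\sup_{\mu\in[\mu_1,\mu_2]}|G(\mu y)/G(y)-\mu^{-\sigma}|\to0$ as $y\to\infty$.

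To translate this back, given a compact $[\lambda_1,\lambda_2]\subset(0,\infty)$ I would note that its image under $\lambda\mapsto1/\lambda$ is the compact set $[1/\lambda_2,1/\lambda_1]\subset(0,\infty)$, and that with $y=1/x$, $\mu=1/\lambda$ one has $F(\lambda x)/F(x)=G(\mu y)/G(y)$ and $\lambda^{\sigma}=\mu^{-\sigma}$, so
\[
\sup_{\lambda\in[\lambda_1,\lambda_2]}\Big|\frac{F(\lambda x)}{F(x)}-\lambda^{\sigma}\Big|
=\sup_{\mu\in[1/\lambda_2,1/\lambda_1]}\Big|\frac{G(\mu y)}{G(y)}-\mu^{-\sigma}\Big|\longrightarrow0
\]
as $x\to0^+$, which is the first assertion. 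For the ``in particular'' clause I would apply this with the interval $[1-\eta,1+\eta]$ (a compact subset of $(0,\infty)$ since $\eta\in(0,1)$): there is $x_0(\eta)\in(0,1)$ with $\sup_{\lambda\in[1-\eta,1+\eta]}|F(\lambda x)/F(x)-\lambda^{\sigma}|<\eta$ for all $x\in(0,x_0(\eta))$. Combining this with the elementary estimate $|\lambda^{\sigma}-1|\le\psi_\sigma(\eta):=\sup_{\lambda\in[1-\eta,1+\eta]}|\lambda^{\sigma}-1|$ and the triangle inequality gives $|F(\lambda x)/F(x)-1|\le\eta+\psi_\sigma(\eta)=:\omega_F(\eta)$ on that range; continuity of $t\mapsto t^{\sigma}$ at $1$ makes $\psi_\sigma(\eta)\to0$, hence $\omega_F(\eta)\to0^+$ as $\eta\to0^+$.

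I do not anticipate a genuine obstacle here: the content is entirely classical and the only care required is bookkeeping --- tracking which interval the inversion $\lambda\mapsto1/\lambda$ produces, checking that compactness and eventual positivity survive it, and repackaging the quantitative first part into the explicit modulus $\omega_F$ of the second. The mildly delicate point, if any, is to state the hypotheses so that the ratios $F(\lambda x)/F(x)$ are defined for all small $x>0$, which is automatic because membership in $\RV^{0^+}_{\sigma}$ includes measurability and eventual positivity near $0^+$.
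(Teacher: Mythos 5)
Your proof is correct and follows exactly the route the paper indicates: the paper does not spell out a proof but states that the lemma is the $0^+$ version of \citet[Theorem~1.5.2]{BinghamGoldieTeugels89}, obtained via the inversion $x\mapsto1/x$, which is precisely what you carry out, including the factoring $G(y)=y^{-\sigma}\widetilde{L}(y)$ to apply the slowly varying UCT and the triangle-inequality bookkeeping for the ``in particular'' clause.
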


\section{Proofs associated with joint extremes}
\label{app:jtex}

\begin{proof}[Proof of Proposition~\ref{prop:jtexbg}]
We have
\begin{align*}
   g(\bm{z}^A) &= g_{C_1}(\bm z^A_{C_1}) + \sum_{k=2}^{M} [g_{C_k}(\bm{z}^A_{C_k})- |z^A_{D_k}|] +\sum_{k=M+1}^N [g_{C_k}(\bm{z}^A_{C_k})- |z^A_{D_k}|]
\end{align*}
with $g_{C_1}(\bm z^A_{C_1}) \geq 1$ and $g_{C_M}(\bm{z}^A_{C_M}) \geq 1$ since at least one element of each of $\bm z^A_{C_1}$ and $\bm z^A_{C_M}$ is one. Furthermore, since $D_k \subset C_k$, each term $g_{C_k}(\bm{z}^A_{C_k})- |z^A_{D_k}| \geq 0$. Putting this together, we conclude that if $|z^A_{D_M}|<1$, we have $g_{C_1}(\bm z^A_{C_1}) \geq 1$, $g_{C_M}(\bm{z}^A_{C_M})- |z^A_{D_M}| > 0$ and all other terms non-negative, so $g(\bm{z}^A)>1$.

Suppose now that $z^A_{D_M} = 1$. By a similar argument, we conclude that if $|z^A_{D_{M-1}}|<1$, we have $g_{C_1}(\bm z^A_{C_1}) \geq 1$, $g_{C_{M-1}}(\bm{z}^A_{C_{M-1}})- |z^A_{D_{M-1}}| > 0$ and all other terms non-negative, so $g(\bm{z}^A)>1$. Iterating this argument, it is clear that a minimum requirement for $g(\bm{z}^A)=1$ is that $z^A_{D_M} = z^A_{D_{M-1}} = \cdots = z^A_{D_2} = 1$, i.e. $\cup_{i=1}^M D_i \subset A$.
\end{proof}

\begin{proof}[Proof of Lemma~\ref{lem:bvad}]
    To get the marginal gauge function, we minimize over $x_2$. In order to do this, we consider the different linear representations of the gauge function based on the relative orderings of $x_1,x_2,x_3$. Specifically, we have six cases:

\newcommand{\thab}{\theta_{12}}
\newcommand{\thbc}{\theta_{23}}
\newcommand{\gaab}{\gamma_{12}}
\newcommand{\gabc}{\gamma_{23}}


\begin{align*}
    g(x_1,&x_2,x_3) =\\
    &\begin{cases}
    \frac{x_1}{\thab} + \frac{x_2}{\gaab} + \left(1-\frac{1}{\thab}-\frac{1}{\gaab}\right) x_2 +  \frac{x_2}{\thbc} + \frac{x_3}{\gabc} + \left(1-\frac{1}{\thbc}-\frac{1}{\gabc}\right) x_2 - x_2, & x_2 \leq x_1\leq x_3,\\
    \frac{x_1}{\thab} + \frac{x_2}{\gaab} + \left(1-\frac{1}{\thab}-\frac{1}{\gaab}\right) x_2 +  \frac{x_2}{\thbc} + \frac{x_3}{\gabc} + \left(1-\frac{1}{\thbc}-\frac{1}{\gabc}\right) x_2 - x_2, & x_2 \leq x_3 \leq x_1,\\
    \frac{x_1}{\thab} + \frac{x_2}{\gaab} + \left(1-\frac{1}{\thab}-\frac{1}{\gaab}\right) x_1 +  \frac{x_2}{\thbc} + \frac{x_3}{\gabc} + \left(1-\frac{1}{\thbc}-\frac{1}{\gabc}\right) x_2 - x_2, & x_1 \leq x_2 \leq x_3,\\
     \frac{x_1}{\thab} + \frac{x_2}{\gaab} + \left(1-\frac{1}{\thab}-\frac{1}{\gaab}\right) x_2 +  \frac{x_2}{\thbc} + \frac{x_3}{\gabc} + \left(1-\frac{1}{\thbc}-\frac{1}{\gabc}\right) x_3 - x_2, & x_3 \leq x_2 \leq x_1,\\
    \frac{x_1}{\thab} + \frac{x_2}{\gaab} + \left(1-\frac{1}{\thab}-\frac{1}{\gaab}\right) x_1 +  \frac{x_2}{\thbc} + \frac{x_3}{\gabc} + \left(1-\frac{1}{\thbc}-\frac{1}{\gabc}\right) x_3 - x_2, & x_1\leq x_3 \leq x_2,\\
    \frac{x_1}{\thab} + \frac{x_2}{\gaab} + \left(1-\frac{1}{\thab}-\frac{1}{\gaab}\right) x_1 +  \frac{x_2}{\thbc} + \frac{x_3}{\gabc} + \left(1-\frac{1}{\thbc}-\frac{1}{\gabc}\right) x_3 - x_2, & x_2 \leq x_3 \leq x_1.
    \end{cases}
\end{align*}
The corresponding derivatives with respect to $x_2$, and subsequent minimizers $x_2^\star$, are:
\begin{align*}
   \frac{\partial g(x_1,x_2,x_3)}{\partial x_2} =
    \begin{cases}
    1- \frac{1}{\thab}-\frac{1}{\gabc} <0, & x_2 \leq x_1 \leq x_3 \Rightarrow x_2^\star = x_1 = \min(x_1,x_3),\\
     1- \frac{1}{\thab}-\frac{1}{\gabc} <0, & x_2 \leq x_3 \leq x_1 \Rightarrow x_2^\star = x_3 = \min(x_1,x_3),\\
    \frac{1}{\gaab}-\frac{1}{\gabc}, & x_1 \leq x_2 \leq x_3 \Rightarrow x_2^\star = \begin{cases}x_3 = \max(x_1,x_3), & \gabc<\gaab\\ x_1 = \min(x_1,x_3), & \gabc>\gaab\end{cases},\\
     \frac{1}{\thab}-\frac{1}{\thbc}, & x_3 \leq x_2 \leq x_1 \Rightarrow x_2^\star = \begin{cases}x_3 = \min(x_1,x_3), & \thbc<\thab\\ x_1 = \max(x_1,x_3), & \thbc>\thab\end{cases},\\
    \frac{1}{\gaab}+\frac{1}{\thbc} - 1 >0, & x_1 \leq x_3 \leq x_2 \Rightarrow  x_2^\star = x_3 = \max(x_1,x_3),\\
     \frac{1}{\gaab}+\frac{1}{\thbc} - 1 >0, & x_2 \leq x_3 \leq x_1 \Rightarrow  x_2^\star = x_1 = \max(x_1,x_3).\\
    \end{cases}
\end{align*}

In Case 3, if $\gamma_{12}=\gamma_{23}=\gamma$, then $g(x_1,x_2,x_3) = x_3/\gamma+(1-1/\gamma)x_1 = \max(x_1,x_3)/\gamma+(1-1/\gamma)\min(x_1,x_3)$ does not depend on $x_2$ in the region $x_1\leq x_2 \leq x_3$. Similarly, in Case 4, if $\theta_{12}=\theta_{23}=\theta$, then $g(x_1,x_2,x_3) = x_1/\theta+(1-1/\theta)x_3 = \max(x_1,x_3)/\theta+(1-1/\theta)\min(x_1,x_3)$ does not depend on $x_2$ in the region $x_3\leq x_2 \leq x_1$. 

Substituting in these minimizers, and combining with the observations made for $\gamma_{12}=\gamma_{23}$ on $x_1\leq x_2 \leq x_3$, and $\theta_{12}=\theta_{23}$ on $x_3\leq x_2 \leq x_1$ gives

\begin{align*}
&g(x_1,x_2^\star,x_3) \\= &\begin{cases}
    \frac{x_1}{\thbc} + \frac{x_3}{\gabc} + \left(1-\frac{1}{\thbc}-\frac{1}{\gabc}\right) x_1 = \frac{\max(x_1,x_3)}{\gabc}+\left(1-\frac{1}{\gabc}\right)\min(x_1,x_3), & [x_1 \leq x_3]\\
        \frac{x_1}{\thab} + \frac{x_3}{\gaab} + \left(1-\frac{1}{\thab}-\frac{1}{\gaab}\right) x_3 = \frac{\max(x_1,x_3)}{\thab}+\left(1-\frac{1}{\thab}\right)\min(x_1,x_3), & [x_3 \leq x_1]\\       
    \frac{x_1}{\thab} + \frac{x_3}{\gaab} + \left(1-\frac{1}{\thab}-\frac{1}{\gaab}\right) x_1 = \frac{\max(x_1,x_3)}{\max(\gaab,\gabc)}+\left(1-\frac{1}{\max(\gaab,\gabc)}\right)\min(x_1,x_3), & [x_1 \leq  x_3; \gabc\leq \gaab]\\    \frac{x_1}{\thbc} + \frac{x_3}{\gabc} + \left(1-\frac{1}{\thbc}-\frac{1}{\gabc}\right) x_1 = \frac{\max(x_1,x_3)}{\max(\gaab,\gabc)}+\left(1-\frac{1}{\max(\gaab,\gabc)}\right)\min(x_1,x_3), & [x_1 \leq  x_3; \gabc \geq \gaab]\\
     \frac{x_1}{\thab} + \frac{x_3}{\gaab} + \left(1-\frac{1}{\thab}-\frac{1}{\gaab}\right) x_3 =  \frac{\max(x_1,x_3)}{\max(\thab,\thbc)}+\left(1-\frac{1}{\max(\thab,\thbc)}\right)\min(x_1,x_3), & [x_3 \leq 
 x_1; \thbc \leq \thab]\\    
     \frac{x_1}{\thbc} + \frac{x_3}{\gabc} + \left(1-\frac{1}{\thbc}-\frac{1}{\gabc}\right) x_3 = \frac{\max(x_1,x_3)}{\max(\thab,\thbc)}+\left(1-\frac{1}{\max(\thab,\thbc)}\right)\min(x_1,x_3), & [x_3 \leq 
 x_1; \thbc \geq \thab]\\
    \frac{x_1}{\thab} + \frac{x_3}{\gaab} + \left(1-\frac{1}{\thab}-\frac{1}{\gaab}\right) x_1 = \frac{\max(x_1,x_3)}{\gaab}+\left(1-\frac{1}{\gaab}\right)\min(x_1,x_3), & [x_1 \leq  x_3]\\
        \frac{x_1}{\thbc} + \frac{x_3}{\gabc} + \left(1-\frac{1}{\thbc}-\frac{1}{\gabc}\right) x_3 = \frac{\max(x_1,x_3)}{\thbc}+\left(1-\frac{1}{\thbc}\right)\min(x_1,x_3), & [x_3 \leq x_1].
\end{cases}    
\end{align*}
Notice now that in the $x_1 \leq  x_3$ cases, the only parameters appearing are $\gaab$ and $\gabc$. In general the function $\max(x,y)/\alpha + (1-1/\alpha)\min(x,y)$ is non-increasing in $\alpha$ for $x,y\geq 0$, so considering $x_1 \leq  x_3$, this is minimized by $\frac{\max(x_1,x_3)}{\max(\gaab,\gabc)}+\left(1-\frac{1}{\max(\gaab,\gabc)}\right)\min(x_1,x_3)$. Similarly, considering the $x_3 \leq x_1$ case, this is minimized by $\frac{\max(x_1,x_3)}{\max(\thab,\thbc)}+\left(1-\frac{1}{\max(\thab,\thbc)}\right)\min(x_1,x_3)$, in other words $g_{\{1,3\}}(x_1,x_3)= \min_{x_2 \geq 0} g(x_1,x_2,x_3)$ can be expressed

\begin{align}
&g_{\{1,3\}}(x_1,x_3) \notag \\= &\begin{cases}
\frac{x_3}{\max(\gaab,\gabc)}+\left(1-\frac{1}{\max(\gaab,\gabc)}\right)x_1 = \frac{\max(x_1,x_3)}{\max(\gaab,\gabc)}+\left(1-\frac{1}{\max(\gaab,\gabc)}\right)\min(x_1,x_3), & x_1 \leq x_3\\
\frac{x_1}{\max(\thab,\thbc)}+\left(1-\frac{1}{\max(\thab,\thbc)}\right)x_3 = \frac{\max(x_1,x_3)}{\max(\thab,\thbc)}+\left(1-\frac{1}{\max(\thab,\thbc)}\right)\min(x_1,x_3), & x_3 \leq  x_1.
\end{cases}
\label{eq:marg13}
\end{align}
Equivalently, equation~\eqref{eq:marg13} can be expressed
\begin{align*}
  g_{\{1,3\}}(x_1, x_3)=  \frac{x_1}{\max(\theta_{12},\theta_{23})}+ \frac{x_3}{\max(\gamma_{12},\gamma_{23})} + \left(1-\frac{1}{\max(\theta_{12},\theta_{23})}-\frac{1}{\max(\gamma_{12},\gamma_{23})}\right)\min(x_1,x_3) .
\end{align*}
\end{proof}

\begin{proof}[Proof of Proposition~\ref{prop:treead}]
Consider indices $k<l \in \Vset$, and let $\Vset'=\{k=i_1,\ldots,i_m=l\}$ be the shortest path from $k$ to $l$. A tree graphical model is a block graphical model, and by Lemma~\ref{lem:blockgraphtopath}, $g_{\{k,l\}}$ can be expressed through the chain graph defined by $\Vset'$.

Firstly suppose that $k=i_1$, $l=i_3$. Then Lemma~\ref{lem:bvad} gives the form of $g_{\{k,l\}}=g_{\{i_1,i_3\}}$, which is of the claimed type. Now let $l=i_4$. Then $g_{\{k,l\}}=g_{\{i_1,i_4\}}$, with
\[
g_{\{i_1,i_4\}}(x_{i_1},x_{i_4}) = g_{\{i_1,i_3\}}(x_{i_1},x_{i_3}) +g_{\{i_3,i_4\}}(x_{i_3},x_{i_4}) - x_{i_3}.
\]
Using Lemma~\ref{lem:bvad} with $\{i_1',i_2',i_3'\} =\{i_1,i_3,i_4\}$ again gives the claimed form for $g_{\{k,l\}}$. We can therefore proceed by induction for any $m\geq 4$.
\end{proof}


\end{document}